\pdfoutput=1
\documentclass[11pt,twoside]{article}
\usepackage[T2A,T1]{fontenc} 
\usepackage[utf8]{inputenc}
\usepackage[russian,english]{babel}
\usepackage[a4paper,inner=2.45cm,outer=2.45cm,top=2.35cm,bottom=2.65cm,
  headheight=15pt,headsep=0.55cm,footskip=1.05cm]{geometry}
\usepackage{lmodern}
\usepackage{amsmath,amssymb,mathtools,amsthm}
\usepackage[numbers,sort&compress]{natbib}

\usepackage{booktabs,tabularx,array}
\usepackage{graphicx}
\usepackage{float}
\usepackage{placeins}
\usepackage[ruled,linesnumbered,noline,procnumbered,nokwfunc]{algorithm2e}
\usepackage{enumitem}
\usepackage{xcolor}
\usepackage{microtype}
\usepackage{fancyhdr}
\usepackage{titlesec}

\usepackage[colorlinks=true,linkcolor=blue!55!black,citecolor=blue!55!black,
  urlcolor=blue!55!black]{hyperref}

\ifdefined\pdfsuppresswarningpagegroup
\fi

\definecolor{journalred}{RGB}{176,35,38}
\DontPrintSemicolon
\SetKwInput{KwIn}{Require}
\SetKwInput{KwOut}{Ensure}
\SetAlgoProcName{Procedure}{Procedure}
\allowdisplaybreaks
\numberwithin{equation}{section}
\titleformat{\section}{\large\bfseries}{\thesection.}{0.65em}{}
\titleformat{\subsection}{\normalsize\bfseries}{\thesubsection.}{0.65em}{}
\titleformat{\subsubsection}{\normalsize\bfseries}{\thesubsubsection.}{0.65em}{}
\titlespacing*{\section}{0pt}{2.0ex plus .5ex minus .2ex}{1.0ex}
\titlespacing*{\subsection}{0pt}{1.6ex plus .4ex minus .2ex}{0.7ex}

\newtheorem{theorem}{Theorem}[section]
\newtheorem{lemma}[theorem]{Lemma}
\newtheorem{proposition}[theorem]{Proposition}
\newtheorem{corollary}[theorem]{Corollary}
\theoremstyle{definition}
\newtheorem{assumption}[theorem]{Assumption}

\theoremstyle{remark}

\newcommand{\R}{\mathbb{R}}

\newcommand{\X}{\mathcal{X}}
\newcommand{\ip}[2]{\langle #1,#2\rangle}
\newcommand{\norm}[1]{\lVert #1\rVert}
\newcommand{\dnorm}[1]{\lVert #1\rVert_*}
\newcommand{\Anorm}[1]{\lVert #1\rVert_A}
\newcommand{\Adnorm}[1]{\lVert #1\rVert_{A^{-1}}}
\newcommand{\KL}{\operatorname{KL}}
\newcommand{\ri}{\operatorname{ri}}
\newcommand{\dom}{\operatorname{dom}}
\newcommand{\argmin}{\operatorname*{arg\,min}}
\newcommand{\prox}{\operatorname{prox}}
\DeclareRobustCommand{\TIGER}{\textnormal{\textsc{Tiger}}}
\DeclareRobustCommand{\TIGERbold}{\textbf{\textsf{TIGER}}}
\newcolumntype{Y}{>{\raggedright\arraybackslash}X}

\newcommand{\shorttitle}{\TIGER: Tunable-Geometry Gradient Sliding}
\newcommand{\shortauthor}{D.\,M.\,Dvinskikh,  A.\,G.\,Novitskii,  E.\,R.\,Obzherin, A.\,V.\,Gasnikov}

\fancypagestyle{journal}{%
  \fancyhf{}
  \fancyhead[LE]{\small\bfseries\thepage}
  \fancyhead[RO]{\small\bfseries\thepage}
  \fancyhead[CE]{\small\itshape\shortauthor}
  \fancyhead[CO]{\small\itshape\shorttitle}
  
  }
\fancypagestyle{firstpage}{%
  \fancyhf{}
  
  }
\fancypagestyle{plain}{%
  \fancyhf{}
  \fancyhead[LE]{\small\bfseries\thepage}
  \fancyhead[RO]{\small\bfseries\thepage}
  \fancyhead[CE]{\small\itshape\shortauthor}
  \fancyhead[CO]{\small\itshape\shorttitle}
  
  }
\renewenvironment{abstract}{%
  \begin{center}\begin{minipage}{0.91\textwidth}\small
  \setlength{\parindent}{1.55em}}{%
  \end{minipage}\end{center}}

\hypersetup{
  pdftitle={TIGER: Tunable-Geometry Accelerated Gradient Sliding with Adaptive Safeguards under Similarity},
  pdfauthor={Darina M. Dvinskikh, Anton G. Novitskii, Ewsey R. Obzherin and Alexander V. Gasnikov},
  pdfsubject={Distributed composite optimization and Bregman gradient sliding},
  pdfkeywords={distributed optimization, gradient sliding, adaptive backtracking, tunable geometry, Bregman geometry, similarity}}

\begin{document}
\hypersetup{pageanchor=false}
\thispagestyle{firstpage}
% \makejournalnotice

% \noindent\small MSC 2020: 90C25, 90C06, 68W15\par
% \vspace{2.3em}

\begin{center}
{\LARGE\bfseries \TIGERbold{}: Tunable-Geometry Accelerated Gradient Sliding
with Adaptive Safeguards under Similarity\par}
\vspace{1.5em}
{\large \bfseries
D.\,M.\,Dvinskikh,  A.\,G.\,Novitskii,  E.\,R.\,Obzherin, A.\,V.\,Gasnikov\par}
\end{center}

\begin{abstract}
We study distributed composite optimization under function similarity when
synchronized gradient queries dominate local computation. We propose
\TIGER{}, an accelerated gradient-sliding method in problem-adapted Bregman
geometry. The method applies to constrained non-Euclidean domains, allows
the communicated correction term to be nonconvex, and terminates each local
subproblem using a computable stationarity residual. Each iteration reuses
one synchronized correction gradient during the local solve and makes a
second query to construct the composite subgradient for the mirror step.
Under relative strong convexity and cocoercivity of the stabilized model
error, the communication complexity is
$O((1+\sqrt{\delta_{\omega}/\mu})\log(1/\varepsilon))$, where $\mu$ is the
relative strong-convexity parameter and $\delta_{\omega}$ is a
geometry-adjusted similarity parameter. The analysis recovers the
Hessian-similarity rate in Hilbert geometry and provides an explicit finite
bound on $\delta_{\omega}$ in entropy geometry on a truncated simplex. We
also introduce Adaptive \TIGER{}, which selects the stabilization coefficient
and the similarity estimate during the run. Under uniform backtracking
bounds, the adaptive method has the same asymptotic communication order
as fixed-parameter \TIGER{}, including the queries spent on backtracking.
Numerical experiments compare Adaptive \TIGER{} with several applicable
first-order methods on entropic Wasserstein barycenter problems
using the Fashion-MNIST dataset. 
\end{abstract}

\noindent\textbf{Keywords:} distributed optimization; gradient sliding;
 Bregman geometry; adaptive backtracking; similarity; Wasserstein barycenters.

\par\medskip
\noindent\textit{Darina M. Dvinskikh}\\
 HSE University,\\
% Pokrovsky Blvd. 11, Moscow, 109028 Russia;\\
e-mail: \texttt{dmdvinskikh@hse.ru}
\\[2mm]
\textit{Anton G. Novitskii}\\
Moscow Institute of Physics and Technology,\\
Trusted AI Research Center RAS, \\
% 109004 Moscow, Russia
% Institutskiy per. 9, Dolgoprudny, 141701 Russia\\
e-mail: \texttt{novitskii.ag@phystech.edu}
\\[2mm]
\textit{Ewsey R. Obzherin}\\
Moscow Institute of Physics and Technology, \\
FusionBrain Lab\\
% Institutskiy per. 9, Dolgoprudny, 141701 Russia\\
e-mail: \texttt{obzherin.ewsey@ya.ru}
\\[2mm]
\textit{Alexander V. Gasnikov}\\
Innopolis University, \\ 
Moscow Institute of Physics and Technology,\\
%Institutskiy per. 9, Dolgoprudny, 141701 Russia\\
Institute for Information Transmission Problems RAS,\\
e-mail: \texttt{gasnikov@yandex.ru}
\par\medskip
\noindent\small\textit{Funding.} The research was supported by Russian Science Foundation (project No. 23-11-00229-{\fontencoding{T2A}\fontfamily{cmr}\selectfont П}),
\url{https://rscf.ru/en/project/23-11-00229/}.\normalsize
\par\bigskip
\hrule\bigskip
\setcounter{page}{1}
\hypersetup{pageanchor=true}
\pagestyle{journal}

\section{Introduction}
\label{sec:introduction}

% Distributed first-order methods alternate inexpensive local operations with
% synchronized broadcasts and reductions.  Every synchronized gradient query
% involves all data holders and may wait for network latency or a slow
% participant.  Communication is therefore often the dominant cost even when
% local gradients are cheap.  Curvature similarity makes this imbalance useful:
% local objectives can serve as accurate curvature models, while communication
% is reserved for a smoother global-to-local correction.  This mechanism
% underlies DANE, statistical preconditioning, and optimal gradient sliding
% under similarity
% \citep{shamir2014communication,hendrikx2020statistically,kovalev2022optimal}.

We consider distributed composite optimization over a star network of $n$
agents.  Agent~$i$ holds a differentiable local loss $f_i$, and all agents
seek a common solution of
\begin{equation}
 \min_{x\in\mathcal{X}} \Phi(x):=F(x)+r(x),
 \qquad
 F(x):=\sum_{i=1}^n w_i f_i(x),
 \label{eq:intro-distributed-objective}
\end{equation}
where $w_i>0$, $\sum_{i=1}^n w_i=1$, $\X$ is a nonempty closed convex
set, and $r:\X\to\R$ is an optional finite-valued closed convex
regularizer.  One data-holding agent is designated as the server and
relabeled as agent~$1$.  Evaluations of $f_1$ are local to the server,
whereas an aggregate gradient requires a synchronized broadcast and
reduction.

Our aim is to reduce these synchronized queries when communication is more
expensive than local computation.  This regime arises, for example, in
distributed empirical risk minimization: local losses may have similar
curvature when the agents hold representative samples from the same
data-generating distribution
\citep{shamir2014communication,hendrikx2020statistically}.  For twice
continuously differentiable losses, the familiar Euclidean form of this
similarity is
\begin{equation}
 \sup_{x\in\X}
 \norm{\nabla^2F(x)-\nabla^2f_1(x)}_2\le\delta,
 \label{eq:intro-hessian-similarity}
\end{equation}
where $\norm{\cdot}_2$ is the spectral norm for matrices and $\delta$
measures the curvature discrepancy between the aggregate and server losses.
The non-Euclidean formulation in Section~\ref{sec:setting} replaces this
matrix bound by a condition adapted to the chosen geometry.

We introduce \TIGER{}, a tunable-geometry accelerated gradient-sliding
method.  The name abbreviates \textbf{T}unable geometry, \textbf{I}nexact
local solves, \textbf{G}radient sliding, \textbf{E}xtrapolation, and
\textbf{R}esidual certification.  At each outer iteration, the method keeps a
synchronized correction gradient fixed while solving a regularized local
model at the server.  A second synchronized query reconstructs the composite
subgradient used in the accelerated mirror step.  The distance-generating
function may be selected to reflect the constraint set, and the local solve
is terminated by a computable stationarity residual.  Adaptive \TIGER{}
selects the stabilization and similarity estimates during the run rather
than relying on sharply tuned global values.

\paragraph{Contributions.}
\begin{itemize}
\item \textbf{A new tunable-geometry gradient-sliding algorithm.}
The method extends the separation between synchronized correction queries and
server-local computation to constrained composite problems in a
problem-selected Bregman geometry.  The mirror map need not be induced by the
server loss, and the local subproblem may be solved inexactly under a
computable residual criterion.

\item \textbf{An accelerated communication guarantee.}
Under relative strong convexity and a geometry-adjusted condition on the
model error, \TIGER{} attains synchronized-query complexity
$O((1+\sqrt{\delta_\omega/\mu})\log(1/\varepsilon))$.  Here $\mu$ is the
relative strong-convexity parameter and $\delta_\omega$ measures similarity
in the selected geometry.  The result recovers the standard
Hessian-similarity rate in quadratic Hilbert geometry and yields explicit
finite parameters for entropy geometry on a truncated simplex.

\item \textbf{Adaptive parameter selection.}
Adaptive \TIGER{} updates the stabilization coefficient and the
geometry-adjusted similarity estimate through computable backtracking
conditions.  Under uniform bounds for these searches, it preserves the
asymptotic communication order of the fixed-parameter method, including the
queries used by rejected candidates.

\item \textbf{A non-Euclidean application.}
We specialize the analysis to entropic Wasserstein barycenters and compare
Adaptive \TIGER{} with applicable first-order methods on problems constructed
from Fashion-MNIST images, using synchronized queries as the communication
metric.
\end{itemize}

\subsection{Related work}
\label{sec:related-work}

\paragraph{Similarity-based distributed optimization.}
Similarity between local objectives has been studied extensively as a means
of reducing communication in distributed optimization.  An early
server-based example is DANE, introduced by
\citet{shamir2014communication}.  DANE corrects each local objective using
the discrepancy between the local and global gradients and then averages the
resulting local minimizers.  Related Newton-type approaches include DiSCO,
proposed by \citet{zhang2015disco}, which computes inexact damped Newton steps
using distributed preconditioned conjugate gradients, and AIDE, introduced
by \citet{reddi2016aide}, which applies Catalyst acceleration to an inexact
DANE scheme.

Subsequent work has considered similarity in server-based, federated, and
gradient-tracking models
\citep{tian2022acceleration,sun2022distributed,khaled2023faster,
takezawa2025exploiting}, as well as further separations between local
computation and communication
\citep{bylinkin2024aseg,bylinkin2026separation}.  The attainable
communication rate depends on the permitted local computation and the
underlying oracle model, as reflected in the corresponding lower bounds
\citep{arjevani2015communication,scaman2017optimal,kovalev2021lower,
kovalev2022optimal}.  Decentralized optimization over time-varying networks
uses a different communication model and requires additional consensus
mechanisms
\citep{rogozin2023decentralized,metelev2023consensus}.

\paragraph{Accelerated methods under similarity.}
\citet{hendrikx2020statistically} propose SPAG, an accelerated distributed
method that uses a regularized empirical loss stored at the server to define
a loss-induced Bregman geometry.  Its convergence rate is governed by the
relative smoothness and relative strong convexity of the global objective
with respect to this reference function, which is assumed to be smooth and
strongly convex in a Euclidean norm.  For a general nonquadratic reference
function, the finite-time bound also depends on a triangle-scaling factor.
A pure square-root dependence on the relative condition number is obtained
for a quadratic reference function and asymptotically under a
Lipschitz-Hessian assumption.  SPAG estimates the triangle-scaling gain
through an acceptance condition evaluated at each iteration, while keeping the relative
smoothness and strong-convexity parameters fixed.

\citet{kovalev2022optimal} develop Optimal Gradient Sliding (OGS), an
oracle-separation method for distributed optimization under similarity.
OGS keeps the gradient of a smooth communicated component fixed during an
inexact auxiliary solve and allows this component to be nonconvex.  The
method attains the optimal square-root dependence on the Euclidean
similarity constant.  Its analysis, however, relies on squared-Euclidean
regularization and does not directly cover the constrained composite
Bregman setting considered here.  The Bregman proximal method of
\citet{beznosikov2024bregman} allows a general problem geometry under
similarity for stochastic monotone variational inequalities, but does not
establish an accelerated square-root communication bound for the strongly
convex composite minimization problem studied in this paper.

\TIGER{} combines oracle separation with a problem-selected Bregman geometry
and a residual-certified composite local solve.  Its adaptive version
differs from the gain adaptation in SPAG.  Adaptive \TIGER{} updates the
stabilization coefficient $\beta$ and the geometry-adjusted similarity
estimate that determines the acceleration weight, using separate acceptance
conditions for the local model and the accelerated mirror step.

\paragraph{Gradient sliding and Bregman acceleration.}
Gradient sliding was introduced to reduce evaluations of an expensive
component of a composite objective by reusing its gradient during inner
iterations \citep{lan2016gradient}.  Accelerated variants extend this form of
oracle separation to broader composite settings
\citep{lan2022accelerated}.

From a geometric perspective, \citet{chen2026accelerated} study accelerated
mirror descent under a four-point compatibility condition between the
smooth objective and the mirror geometry.  Their method does not distinguish
synchronized correction queries from computations performed locally at the
server.  Our analysis uses a different requirement: the geometric
compatibility condition is imposed only on a stabilized discrepancy between
the global objective and a reference loss stored at the server, rather than
on the full objective.  We show that this condition is sufficient for the
four-point estimate used in the convergence analysis.  This formulation
preserves the separation between synchronized queries and local computation
and allows the local subproblems to be terminated using a computable
residual.

Accelerated proximal schemes and their inexact variants provide natural
composite benchmarks when the required proximal map is available
\citep{tseng2008accelerated,beck2009fista,nesterov2013composite,
schmidt2011inexact,taylor2017composite,upadhyaya2026optimal}.

\paragraph{Wasserstein barycenters.}
Wasserstein barycenters are Fréchet means of probability measures in
Wasserstein space \citep{agueh2011barycenters}.  Algorithms for fixed-support
barycenters with entropic regularization were developed by
\citet{cuturi2014fast} and \citet{benamou2015iterative}.  The associated
regularized transport problems admit the classical Sinkhorn scaling
representation \citep{cuturi2013sinkhorn}, and the resulting value function
is smooth in the interior of the simplex
\citep[Theorem~2]{luise2018differential}.  Computational aspects of optimal
transport are surveyed by \citet{peyre2019computational}.  Complexity bounds
and distributed formulations of Wasserstein barycenter problems are studied
in
\citep{kroshnin2019complexity,uribe2018distributed,
dvinskikh2021decentralized}.

\paragraph{Organization.}
Section~\ref{sec:setting} introduces the distributed composite problem and
the underlying Bregman geometry.  Section~\ref{sec:algorithm} presents
\TIGER{}  along with its
convergence and synchronized-query complexity guarantees.
Section~\ref{sec:adaptive-tiger} develops adaptive choices of the two
curvature parameters and gives the resulting communication guarantee.
Section~\ref{sec:geometry} derives verifiable model-error bounds in Hilbert
and entropy geometries and applies \TIGER{} to entropic Wasserstein
barycenters.  Section~\ref{sec:experiments} presents the numerical results, and
Section~\ref{sec:conclusion} concludes the paper. The appendices contain the proofs of the main results, the full specification
of Adaptive \TIGER{}, and additional details of the numerical experiments.

\section{Preliminaries}
\label{sec:setting}

\subsection{Similarity}

We solve problem~\eqref{eq:intro-distributed-objective} and assume that its
minimum is attained:
\begin{equation*}
 x^\star\in\argmin_{x\in\X}\Phi(x),
 \qquad
 \Phi^\star:=\Phi(x^\star)=\min_{x\in\X}\Phi(x)\in\R.
\end{equation*}

% At a query point $x$, the server broadcasts $x$ to agents $2,\ldots,n$,
% receives their gradients, evaluates $\nabla f_1(x)$ locally, and forms
% \begin{equation*}
%  \nabla F(x)=\sum_{i=1}^n w_i\nabla f_i(x).
% \end{equation*}
% We count this broadcast and reduction as one synchronization round.  By
% contrast, evaluating $f_1+r$ requires no communication.

For the fixed-metric results, assume additionally that the losses are twice
continuously differentiable on an open neighborhood of $\X$.  Let
$A=A^\top\succ0$ and define
\begin{equation*}
 \Anorm{x}:=\sqrt{x^\top Ax},
 \qquad
 \Adnorm{s}:=\sqrt{s^\top A^{-1}s}
 =\sup_{\Anorm{u}\le1}\ip{s}{u}.
\end{equation*}
Thus, in the fixed quadratic geometry, the generic notation specializes to
$\norm{\cdot}=\Anorm{\cdot}$ and
$\dnorm{\cdot}=\Adnorm{\cdot}$.  In general, the dual norm associated with
$\norm{\cdot}$ is
\[ \dnorm{s}:=\sup_{\norm{u}\le1}\ip{s}{u}.\]
For a linear operator $H:\R^d\to\R^d$, let
\begin{equation*}
 \norm{H}_{A\to A^{-1}}
 :=\sup_{\Anorm{u}\le1}\Adnorm{Hu}.
\end{equation*}
This is the operator norm from the primal space
$(\R^d,\Anorm{\cdot})$ to its dual $(\R^d,\Adnorm{\cdot})$.
In this metric, we use the standard Hessian similarity condition
\citep{hendrikx2020statistically,kovalev2022optimal}
\begin{equation}
 \sup_{x\in\X}
 \norm{\nabla^2F(x)-\nabla^2f_1(x)}_{A\to A^{-1}}
 \le\delta.
 \label{eq:reference-hilbert-similarity}
\end{equation}
Since the Hessian difference is symmetric,
condition~\eqref{eq:reference-hilbert-similarity} is equivalent to
\begin{equation*}
 -\delta A
 \preceq \nabla^2F(x)-\nabla^2f_1(x)
 \preceq \delta A,
 \qquad x\in\X.
\end{equation*}
For $A=I$, the displayed Hessian bound is exactly
\eqref{eq:intro-hessian-similarity}.  The parameter $\delta$ is used only in
the quadratic Hilbert specialization.  The general Bregman result uses the
cocoercivity condition on the model-error gradient introduced below.

% \subsection{Correction split and relative geometry}

The similarity condition suggests separating the globally communicated
correction from the objective available at the server.  Define
\begin{equation*}
 \Phi=p+q,
 \qquad
 p:=F-f_1,
 \qquad
 q:=f_1+r.
\end{equation*}
Thus $p$ is the smooth correction whose gradient requires synchronization,
whereas $q$ is the composite term available at the server.  When taking
subdifferentials, we extend $q$ by $+\infty$ outside $\X$ and assume that this
extension is proper, closed, and convex.  This holds, for example, when
$f_1$ is convex on $\X$.  The correction $p$ is continuously differentiable
on $\X$ and may be nonconvex.  Throughout, $\partial q$ denotes the convex
subdifferential of the extended function.
% One correction query returns
% \begin{equation}
%  \nabla p(x)=\nabla F(x)-\nabla f_1(x)
%  =\sum_{i=2}^n w_i\bigl(\nabla f_i(x)-\nabla f_1(x)\bigr).
%  \label{eq:correction-query}
% \end{equation}
Consequently, condition~\eqref{eq:reference-hilbert-similarity} is
equivalent to the Hessian bound
$\norm{\nabla^2p(x)}_{A\to A^{-1}}\le\delta$.  Integrating the Hessian
along the segment from $y$ to $x$ gives the gradient consequence
\begin{equation*}
 \Adnorm{\nabla p(x)-\nabla p(y)}
 \le\delta\Anorm{x-y},
 \qquad x,y\in\X,
\end{equation*}
which is the form used in the Hilbert-space estimates below.

Although $p$ may be nonconvex, the analysis only uses vectors in the
set $\nabla p(x)+\partial q(x)$.

\subsection{Bregman divergence and distance-generating function} 
Let $\omega:\X\to\R$ be a distance-generating function: it is convex,
continuously differentiable, and $1$-strongly convex with respect to
$\norm{\cdot}$,
\begin{equation}
 \omega(y)\ge \omega(x)+\ip{\nabla\omega(x)}{y-x}
 +\frac12\norm{y-x}^2
 \qquad(x,y\in\X).
 \label{eq:omega-strong}
\end{equation}
Its Bregman divergence is
\begin{equation*}
 D_\omega(y,x):=\omega(y)-\omega(x)-\ip{\nabla\omega(x)}{y-x}.
\end{equation*}
Thus $D_\omega(y,x)\ge\norm{y-x}^2/2$.

This Bregman divergence supplies the problem-adapted stabilization and mirror
displacement used below.  In Euclidean geometry,
$ \omega(x)=\frac12\norm{x}_2^2,$ and
$ D_\omega(y,x)=\frac12\norm{y-x}_2^2.$
More generally, $\omega(x)=\Anorm{x}^2/2$ gives a fixed metric, while
$\omega(x)=\sum_i x_i\log x_i$ gives the entropy geometry on a simplex.

Relative strong convexity compares objective curvature with a
distance-generating function rather than with a squared Euclidean norm
\citep{lu2018relatively}.  The following assumption uses its standard
composite extension; related relative-conditioning assumptions underlie
statistical preconditioning \citep{hendrikx2020statistically}.  For
$\omega(x)=\norm{x}_2^2/2$, it reduces to ordinary $\mu$-strong convexity.

\begin{assumption}[Relative strong convexity]
\label{ass:outer}
There is $\mu>0$ such that, for every $x,y\in\X$ and
$s\in\partial q(x)$,
\begin{equation*}
 \Phi(y)\ge \Phi(x)+\ip{\nabla p(x)+s}{y-x}+\mu D_\omega(y,x).
\end{equation*}
\end{assumption}

\section{The \TIGER{} method}
\label{sec:algorithm}
We present \TIGER{} and prove an accelerated linear rate under the
prescribed local residual condition.

\subsection{Bregman proximal-gradient subproblem}

Fix a stabilization parameter $\beta\ge0$.  At a synchronized query point
$y\in\X$, define
\begin{equation}
 \mathcal M_y(x)
 :=p(y)+\ip{\nabla p(y)}{x-y}+q(x)+\beta D_\omega(x,y),
 \qquad x\in\X.
 \label{eq:local-model}
\end{equation}
This is the Bregman proximal-gradient subproblem obtained by linearizing $p$,
keeping $q$ intact, and adding a Bregman proximal term.  Once $\nabla p(y)$
has been formed, all iterations used to solve~\eqref{eq:local-model} are local
to the server.  \TIGER{} terminates the solve when a computable optimality
residual reaches its prescribed tolerance.  

\subsection{Controlling the model error}

For the stabilization parameter $\beta$ used in~\eqref{eq:local-model}, define
$R_\beta(x):=\beta\omega(x)-p(x)$.  The identity
\begin{equation}
 \mathcal M_y(x)-\Phi(x)
 =R_\beta(x)-R_\beta(y)
  -\ip{\nabla R_\beta(y)}{x-y},
 \qquad x\in\X,
 \label{eq:model-identity-corrected}
\end{equation}
shows that convexity of $R_\beta$ makes the subproblem objective a majorant of
$\Phi$.  The auxiliary function $R_\beta$ represents the stabilized modeling
error; the algorithmic geometry remains generated by $\omega$.

Acceleration creates a cross term between a change of $\nabla R_\beta$ and
an independent mirror displacement.  The following condition controls this
interaction.

\begin{assumption}[Cocoercivity of the model-error gradient]
\label{ass:correction}
The function $R_\beta$ is convex on $\X$, and there is a finite
$\delta_{\omega}\ge0$ such that, for every $x,y\in\X$,
\begin{equation*}
 \dnorm{\nabla R_\beta(y)-\nabla R_\beta(x)}^2
 \le
 \delta_{\omega}
 \ip{\nabla R_\beta(y)-\nabla R_\beta(x)}{y-x}
 .
\end{equation*}
\end{assumption}

The inner product on the right-hand side is the symmetrized Bregman divergence
of $R_\beta$.  For $\delta_{\omega}>0$,
Assumption~\ref{ass:correction} states that $\nabla R_\beta$ is
$1/\delta_{\omega}$-cocoercive in the selected norm and its dual.  We
refer to $\delta_{\omega}$ as the geometry-adjusted similarity parameter.

\begin{lemma}[Four-point model-error bound]
\label{lem:model-error-four-point}
Under Assumption~\ref{ass:correction}, for every $x,y,a,b\in\X$,
\begin{equation*}
 \left|
 \ip{\nabla R_\beta(y)-\nabla R_\beta(x)}{a-b}
 \right|^2
 \le
 2\delta_{\omega}
 \ip{\nabla R_\beta(y)-\nabla R_\beta(x)}{y-x}
 D_\omega(a,b).
\end{equation*}
\end{lemma}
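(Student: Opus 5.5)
The proof is a chain of three elementary inequalities: the definition of the dual norm, Assumption~\ref{ass:correction}, and the strong convexity \eqref{eq:omega-strong} of $\omega$. Fix $x,y,a,b\in\X$ and write $g:=\nabla R_\beta(y)-\nabla R_\beta(x)$. By convexity of $R_\beta$, which is part of Assumption~\ref{ass:correction}, the gradient $\nabla R_\beta$ is monotone, so $\ip{g}{y-x}\ge0$. Hence the right-hand side of the claimed inequality is nonnegative and all square roots below are well defined.

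First bound the left-hand side by the duality pairing. Since $\dnorm{s}=\sup_{\norm{u}\le1}\ip{s}{u}$, we have $|\ip{g}{a-b}|\le\dnorm{g}\,\norm{a-b}$, and therefore
\begin{equation*}
 \bigl|\ip{g}{a-b}\bigr|^2\le\dnorm{g}^2\,\norm{a-b}^2 .
\end{equation*}

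Next, Assumption~\ref{ass:correction} gives $\dnorm{g}^2\le\delta_\omega\ip{g}{y-x}$. Inequality \eqref{eq:omega-strong}, applied with the roles of the points chosen as $(y,x)\mapsto(a,b)$, gives $D_\omega(a,b)\ge\tfrac12\norm{a-b}^2$, that is, $\norm{a-b}^2\le 2D_\omega(a,b)$. Substituting both bounds into the previous display yields
\begin{equation*}
 \bigl|\ip{g}{a-b}\bigr|^2\le 2\delta_\omega\,\ip{g}{y-x}\,D_\omega(a,b),
\end{equation*}
which is the claim.

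The argument has no real obstacle. The only points needing care are that the norm pairing is used in the correct direction, namely the primal norm on $a-b$ and the dual norm on the gradient difference, and that the factor $2$ comes precisely from the $\tfrac12$ in the $1$-strong convexity of $\omega$. The degenerate case $\delta_\omega=0$ is covered automatically: the assumption then forces $\dnorm{g}=0$, so $g=0$ and both sides vanish.
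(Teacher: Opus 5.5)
Your proposal is correct and follows the paper's own argument: dual Cauchy--Schwarz, then Assumption~\ref{ass:correction}, then $D_\omega(a,b)\ge\norm{a-b}^2/2$, with the nonnegativity of $\ip{g}{y-x}$ justifying the final multiplication. The separate treatment of $\delta_\omega=0$ is unnecessary, since the main chain already covers that case. It is also slightly imprecise in the simplex geometry, where $\dnorm{g}=0$ forces only $g\in\R\mathbf 1$ rather than $g=0$; this still gives $\ip{g}{a-b}=0$, so the conclusion is unaffected.
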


\begin{proof}
Dual Cauchy--Schwarz, Assumption~\ref{ass:correction}, and
$D_\omega(a,b)\ge\norm{a-b}^2/2$ give the result.
\end{proof}

\paragraph{Relation to accelerated Bregman analyses.}
Lemma~\ref{lem:model-error-four-point} has the same four-point structure as
the compatibility condition used in accelerated mirror methods
\citep{chen2026accelerated}.  Here it follows from cocoercivity of the
gradient of $R_\beta(x)=\beta\omega(x)-p(x)$.  Only $\nabla p$ therefore
requires synchronization, while $q$ remains inside the Bregman
proximal-gradient subproblem.  In contrast to the loss-induced geometry of
SPAG \citep{hendrikx2020statistically}, $\omega$ may be selected independently
of $f_1$.  Proposition~\ref{prop:hilbert-cocoercivity} recovers the fixed-metric
OGS dependence \citep{kovalev2022optimal}.

The choice of the acceleration weight follows directly from the one-step
estimate.  Applying Lemma~\ref{lem:model-error-four-point} and Young's
inequality to
the acceleration cross term produces the coefficient
$\tau^2\delta_{\omega}/\mu$ in front of a Bregman descent term.  The cross term is
therefore absorbed whenever $\tau^2\delta_{\omega}\le\mu$.  Since the contraction
factor $(1+\tau)^{-1}$ improves as $\tau$ increases, while the extrapolation
point must remain a convex combination and hence requires $\tau\le1$, we use
the largest admissible value
\begin{equation}
 \tau:=
 \left(\max\left\{1,\sqrt{{\delta_{\omega}}/{\mu}}\right\}\right)^{-1}.
 \label{eq:tau-choice}
\end{equation}

Assumptions~\ref{ass:outer} and~\ref{ass:correction} also ensure that the
two subproblems below are well posed.  Indeed, $\Phi(x)-\mu\omega(x)$ and
$R_\beta(x)$ are convex, so~\eqref{eq:model-identity-corrected} makes
$\mathcal M_y$ $\mu$-strongly convex.  The outer mirror objective is
$(1+\tau)$-strongly convex.  Closedness and coercivity ensure existence,
while strong convexity ensures uniqueness.

\subsection{Algorithm}
% To evaluate $\nabla p(x)$, the server broadcasts $x$, the agents return their
% local gradients, and the server aggregates the responses.  We count this
% broadcast--aggregation operation as one communication round.

\TIGER{} maintains a mirror state $x_k$ and an accepted local state $v_k$.
The first synchronized correction is reused throughout the local subproblem
solve, which is the gradient-sliding step; the second reconstructs a valid
composite subgradient for the outer Bregman update, while all intermediate
work is local to the server.

\begin{algorithm}[H]
\caption{\TIGER{}}
\label{alg:tiger-corrected}
\KwIn{starting points $x_0=v_0\in\X$; relative strong convexity parameter
$\mu>0$; stabilization parameter $\beta\ge0$; geometry-adjusted similarity parameter
$\delta_{\omega}\ge0$; number of iterations $K\in\mathbb N$; residual tolerances
$\xi_k\ge0$.}

$\tau\leftarrow1/\max\{1,\sqrt{\delta_{\omega}/\mu}\}$;

\For{$k=0,\ldots,K-1$}{
 $y_k\leftarrow\tau x_k+(1-\tau)v_k$\;
 Synchronize at $y_k$ and set $\ell_k\leftarrow\nabla p(y_k)$\;
 Locally find $v_{k+1}\in\X$ and
 $e_k\in\partial\mathcal M_{y_k}(v_{k+1})
 =\ell_k+\partial q(v_{k+1})
 +\beta\bigl(\nabla\omega(v_{k+1})-\nabla\omega(y_k)\bigr)$ such that
 $\dnorm{e_k}^2\le\xi_k$\;
 Synchronize at $v_{k+1}$ and set
 $g_k\leftarrow e_k+\nabla p(v_{k+1})-\ell_k
 -\beta\bigl(\nabla\omega(v_{k+1})-\nabla\omega(y_k)\bigr)$\;
 $x_{k+1}\leftarrow\displaystyle
 \argmin_{x\in\X}\left\{
  \frac{\tau}{\mu}\ip{g_k}{x}
  +\tau D_\omega(x,v_{k+1})+D_\omega(x,x_k)
 \right\}$\;
}
\KwOut{$v_K$.}
\end{algorithm}

The correction $\nabla p(y_k)$ is held fixed while the server solves
$\mathcal M_{y_k}$; this is the gradient-sliding phase.  The second
synchronization gives
$g_k\in\nabla p(v_{k+1})+\partial q(v_{k+1})$ by direct cancellation.
Thus every outer iteration uses two synchronization rounds, and all work
between them is local to the server.

\subsection{Convergence guarantee}

\begin{lemma}[Last-iterate Bregman bound]
\label{lem:rates}
Let Assumptions~\ref{ass:outer} and~\ref{ass:correction} hold.
Let $\{(x_k,v_k)\}_{k=0}^K$ be generated by
Algorithm~\ref{alg:tiger-corrected} with $\tau$ chosen according
to~\eqref{eq:tau-choice} and residual bounds $\xi_k\ge0$.
Then, for every $k=0,\ldots,K$,
\begin{equation*}
 \mu(1+\tau)D_\omega(x^\star,x_k)
 +\Phi(v_k)-\Phi^\star 
 \le \frac{
  \Phi(v_0)-\Phi^\star
  +\mu(1+\tau)D_\omega(x^\star,x_0)}{(1+\tau)^{k}}+\frac1{2\mu}\sum_{j=0}^{k-1}
 \frac{\xi_j}{(1+\tau)^{k-1-j}},
\end{equation*}
where the sum is zero for $k=0$.
\end{lemma}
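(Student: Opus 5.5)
The plan is to prove a one-step contraction for the potential
\[
\Psi_k:=\Phi(v_k)-\Phi^\star+\mu(1+\tau)D_\omega(x^\star,x_k),
\]
namely
\[
(1+\tau)\Psi_{k+1}\le \Psi_k+\frac{(1+\tau)\xi_k}{2\mu}.
\]
Unrolling this by induction gives the stated bound. The factor $(1+\tau)^{-(k-1-j)}$ on $\xi_j$ appears after dividing by $(1+\tau)^{k}$; a slightly looser constant per step would also be absorbed into the displayed form.

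\textbf{Mirror step.} Apply the three-point (Bregman proximal) inequality to the definition of $x_{k+1}$, whose objective is $(1+\tau)$-strongly convex relative to $\omega$. For $u=x^\star$ this gives
\[
\frac{\tau}{\mu}\ip{g_k}{x_{k+1}-x^\star}+\tau D_\omega(x_{k+1},v_{k+1})+D_\omega(x_{k+1},x_k)
\le \tau D_\omega(x^\star,v_{k+1})+D_\omega(x^\star,x_k)-(1+\tau)D_\omega(x^\star,x_{k+1}).
\]
Since $g_k\in\nabla p(v_{k+1})+\partial q(v_{k+1})$, Assumption~\ref{ass:outer} at $x=v_{k+1}$, $y=x^\star$ yields
\[
\ip{g_k}{v_{k+1}-x^\star}\ge \Phi(v_{k+1})-\Phi^\star+\mu D_\omega(x^\star,v_{k+1}).
\]
This cancels the $\tau D_\omega(x^\star,v_{k+1})$ term after multiplication by $\mu$. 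What remains is the cross term $\frac{\tau}{\mu}\ip{g_k}{v_{k+1}-x_{k+1}}$, which we must show is bounded below by roughly $-\tau\,(\text{something})$ plus descent.

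\textbf{Local step.} Use the majorization identity~\eqref{eq:model-identity-corrected} with convexity of $R_\beta$. Write $g_k=e_k+\nabla p(v_{k+1})-\nabla p(y_k)-\beta(\nabla\omega(v_{k+1})-\nabla\omega(y_k))+s$ with $s\in\partial q(v_{k+1})$. This gives
\[
g_k=e_k-(\nabla R_\beta(v_{k+1})-\nabla R_\beta(y_k))+\tilde g,
\]
where $\tilde g=\nabla p(y_k)+s+\beta(\nabla\omega(v_{k+1})-\nabla\omega(y_k))$ is the model subgradient minus $e_k$. Compare $\Phi(v_{k+1})$ against $\Phi(v_k)$ and against the point $z:=\tau x_{k+1}+(1-\tau)v_k$. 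Note that $z-y_k=\tau(x_{k+1}-x_k)$, and the decomposition $\tau(v_{k+1}-x_{k+1})=(1-\tau)(v_k-v_{k+1})+(y_k-v_{k+1})-\dots$ expresses the cross term through $v_k-v_{k+1}$, $y_k-v_{k+1}$ and $x_{k+1}-x_k$. The $\tilde g$ part is handled by convexity of $\Phi$ (Assumption~\ref{ass:outer} at $v_{k+1}$ toward $v_k$, weighted by $1-\tau$). This produces $(1-\tau)(\Phi(v_k)-\Phi(v_{k+1}))$, and after multiplication these combine into the $(1+\tau)$ factor on $\Phi(v_{k+1})-\Phi^\star$.

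\textbf{Main obstacle.} The hardest part is the model-error term $\tau\ip{\nabla R_\beta(v_{k+1})-\nabla R_\beta(y_k)}{x_{k+1}-x_k}$. Here Lemma~\ref{lem:model-error-four-point} with $a=x_{k+1}$, $b=x_k$ gives
\[
\bigl|\cdot\bigr|\le\sqrt{2\delta_\omega \,\ip{\nabla R_\beta(v_{k+1})-\nabla R_\beta(y_k)}{v_{k+1}-y_k}\,D_\omega(x_{k+1},x_k)}.
\]
By Young's inequality this is at most
\[
\ip{\nabla R_\beta(v_{k+1})-\nabla R_\beta(y_k)}{v_{k+1}-y_k}+\frac{\tau^2\delta_\omega}{2}\cdot\frac{2}{\mu}\cdot\frac{\mu}{\ldots}D_\omega(x_{k+1},x_k).
\]
The symmetrized Bregman term is paid for by the gap between $\mathcal M_{y_k}$ and $\Phi$ via~\eqref{eq:model-identity-corrected}, using $\mathcal M_{y_k}(v_{k+1})\ge\Phi(v_{k+1})$. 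The $D_\omega(x_{k+1},x_k)$ term is absorbed by the $\mu D_\omega(x_{k+1},x_k)$ available from the mirror step precisely because $\tau^2\delta_\omega\le\mu$ by~\eqref{eq:tau-choice}.

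\textbf{Residual term.} The term $\frac{\tau}{\mu}\ip{e_k}{v_{k+1}-x_{k+1}}$, after rescaling, is bounded by Young's inequality as
\[
\frac{1}{2\mu}\dnorm{e_k}^2+\frac{\mu}{2}\norm{\cdot}^2\le \frac{\xi_k}{2\mu}+\mu\tau D_\omega(x_{k+1},v_{k+1}),
\]
using the leftover $\tau D_\omega(x_{k+1},v_{k+1})$ from the mirror step. I expect the bookkeeping of which Bregman terms (and which arguments order) absorb which cross terms to be the delicate step. If the direct split fails, I will first try instead evaluating the model inequality at $z$ and using convexity of $R_\beta$ along the segment.
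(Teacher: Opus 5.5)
\paragraph{Verdict.} Your overall structure matches the paper's. This includes the potential $\mathcal L_k$, the two-center three-point inequality for $x_{k+1}$, relative strong convexity at $v_{k+1}$ toward $x^\star$ and $v_k$ with weights $\tau/\mu$ and $(1-\tau)/\mu$, Lemma~\ref{lem:model-error-four-point} with Young and $\tau^2\delta_\omega\le\mu$, Fenchel--Young, and unrolling. But the step you flag as the main obstacle rests on a wrong formula for $g_k$ and a mechanism that cannot work. This is a genuine gap.

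\paragraph{The formula for $g_k$.} By Algorithm~\ref{alg:tiger-corrected}, $g_k=e_k-h_k$ exactly, with $h_k:=\nabla R_\beta(v_{k+1})-\nabla R_\beta(y_k)$. The extra $+s$ in your expression is spurious. Your $\tilde g=\nabla p(y_k)+s+\beta(\nabla\omega(v_{k+1})-\nabla\omega(y_k))$ is $e_k$ itself, so taken literally your decomposition reads $g_k=2e_k-h_k$. There is no separate ``$\tilde g$ part'' to treat by convexity of $\Phi$.

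\paragraph{The source of $S_k$.} You propose to pay for $S_k=\ip{h_k}{v_{k+1}-y_k}$ with $\mathcal M_{y_k}(v_{k+1})\ge\Phi(v_{k+1})$. This fails for two reasons. First, the gap $\mathcal M_{y_k}(v_{k+1})-\Phi(v_{k+1})$ is only the one-sided divergence $R_\beta(v_{k+1})-R_\beta(y_k)-\ip{\nabla R_\beta(y_k)}{v_{k+1}-y_k}\le S_k$. Second, the inequality bounds $\Phi(v_{k+1})$ from above by a model value. To extract a negative term you would need an upper bound on $\mathcal M_{y_k}(v_{k+1})$, and your chain has none: it contains only subgradient inequalities for $\Phi$ at $v_{k+1}$. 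Obtaining one via inexact optimality brings in $\mathcal M_{y_k}(u)=\Phi(u)+D_{R_\beta}(u,y_k)$ at some comparison point $u$. That positive term is not absorbable.

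\paragraph{The missing observation.} The term $-S_k$ comes for free. After adding the three inequalities, all cross terms collect into $\frac1\mu\ip{g_k}{w}$ with
\[
 w=\tau(v_{k+1}-x_{k+1})+(1-\tau)(v_{k+1}-v_k)=v_{k+1}-y_k+\tau(x_k-x_{k+1}).
\]
Your version has the sign of $y_k-v_{k+1}$ reversed. Using $g_k=e_k-h_k$,
\[
 \ip{g_k}{w}=-S_k+\tau\ip{h_k}{x_{k+1}-x_k}+\ip{e_k}{w}.
\]
So the secant part of $w$ supplies $-S_k$, which together with $-D_\omega(x_{k+1},x_k)$ absorbs the four-point cross term.

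\paragraph{The residual step.} The same bookkeeping fixes it. The residual is paired with all of $w$, not only with $\tau(v_{k+1}-x_{k+1})$. You therefore also need $(1-\tau)D_\omega(v_k,v_{k+1})$ from the inequality toward $v_k$. Then $\tau D_\omega(x_{k+1},v_{k+1})+(1-\tau)D_\omega(v_k,v_{k+1})\ge\frac12\norm{w}^2$, and Fenchel--Young gives exactly $\dnorm{e_k}^2/(2\mu)$.

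\paragraph{The constants.} A looser per-step constant would \emph{not} be absorbed into the displayed bound. The exact factors $(1+\tau)^{-1}$ and $1/(2\mu)$ come from three facts:
\begin{itemize}
\item coefficient $1$ on $\Phi(v_{k+1})-\Phi^\star$;
\item coefficient $1-\tau\le(1+\tau)^{-1}$ on $\Phi(v_k)-\Phi^\star$;
\item $\mu D_\omega(x^\star,x_k)=\mu(1+\tau)D_\omega(x^\star,x_k)/(1+\tau)$.
\end{itemize}
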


\noindent\textit{Proof idea.}
The algorithm has two states: the objective gap measures the accepted local
point $v_k$, whereas the Bregman term measures the mirror state $x_k$.  An
objective-gap recursion alone therefore does not close.  For the proof only,
define the nonnegative Lyapunov quantity
\begin{equation}
 \mathcal L_k
 :=\Phi(v_k)-\Phi^\star
 +\mu(1+\tau)D_\omega(x^\star,x_k).
 \label{eq:lyapunov-quantity}
\end{equation}
The coefficient of the Bregman term makes both states contract with the same
factor.  The outer variational inequality, the Bregman three-point identity,
and relative strong convexity give
\begin{equation}
 \mathcal L_{k+1}
 \le \frac{\mathcal L_k}{1+\tau}
 +\frac{\dnorm{e_k}^2}{2\mu}
 \le \frac{\mathcal L_k}{1+\tau}+\frac{\xi_k}{2\mu}.
 \label{eq:lyapunov-recursion}
\end{equation}
Lemma~\ref{lem:model-error-four-point} and
$\tau^2\delta_{\omega}\le\mu$ absorb the model-error cross term; strong
convexity of $\omega$ and Fenchel--Young absorb the residual.  Unrolling
\eqref{eq:lyapunov-recursion} and expanding~\eqref{eq:lyapunov-quantity}
gives Lemma \ref{lem:rates}.  Thus the Lyapunov notation is only a
proof device: the result itself is stated in terms of the objective gap and
the Bregman divergence.  Dropping the nonnegative terminal Bregman term gives
an objective-gap bound.  The complete argument is in
Appendix~\ref{app:proof-one-step}--\ref{app:proof-rates}.

For the communication guarantee, let $\overline B_0$ be any upper bound on
the initial error, written in the original quantities as
\begin{equation}
 \overline B_0\ge
 \Phi(v_0)-\Phi^\star
 +\mu(1+\tau)D_\omega(x^\star,x_0).
 \label{eq:initial-scale}
\end{equation}
If the residual tolerances satisfy
\begin{equation}
 \xi_k=\frac{\mu\tau\overline B_0}{4}
 \left(1-\frac\tau4\right)^{k+1},
 \label{eq:residual-schedule}
\end{equation}
then
\begin{equation}
 \mu(1+\tau)D_\omega(x^\star,x_K)
 +\Phi(v_K)-\Phi^\star
 \le \overline B_0(1-\tau/4)^K.
 \label{eq:residual-rate}
\end{equation}

If $x_0=v_0\in\X$ and
$g_0\in\nabla p(x_0)+\partial q(x_0)$ is available, then
$\overline B_0=4\dnorm{g_0}^2/\mu$ is admissible; see
Appendix~\ref{app:proof-initial-bound}.

\subsection{Communication complexity}

The following consequence of Lemma~\ref{lem:rates} is the principal
synchronized-query guarantee.

% The comparison that follows is placed here,
% rather than in the introduction or problem statement, so that every parameter
% in the \TIGER{} row has already been defined and justified by a theorem.

\begin{theorem}[Communication complexity of \TIGER{}]
\label{thm:communication}
Suppose that the conditions of Lemma~\ref{lem:rates} hold, let
$\overline B_0$ satisfy~\eqref{eq:initial-scale}, and fix
$\varepsilon>0$.  Run Algorithm~\ref{alg:tiger-corrected} with residual
tolerances set according to~\eqref{eq:residual-schedule}.
After
\begin{equation*}
 K=O\left(
  \left(1+\sqrt{\frac{\delta_{\omega}}{\mu}}\right)
  \log\left(1+\frac{\overline B_0}{\varepsilon}\right)
 \right)
\end{equation*}
outer iterations (and hence the same order of synchronized communication
rounds), the output satisfies $\Phi(v_K)-\Phi^\star\le\varepsilon$.
\end{theorem}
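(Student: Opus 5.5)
The plan is to derive the theorem from Lemma~\ref{lem:rates} by first establishing the residual-rate bound~\eqref{eq:residual-rate}, then converting the geometric factor into an iteration count. The step that needs genuine care is the derivation of~\eqref{eq:residual-rate} from the schedule~\eqref{eq:residual-schedule}; the conversion to $K$ is routine.

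\textbf{Step 1: the residual-rate bound.} Substitute $\xi_j=\frac{\mu\tau\overline B_0}{4}(1-\tau/4)^{j+1}$ into Lemma~\ref{lem:rates} and write $\rho:=(1+\tau)^{-1}$ and $\theta:=1-\tau/4$. The deterministic term is at most $\overline B_0\rho^k$ by~\eqref{eq:initial-scale}. The error term equals
\[
\frac{\tau\overline B_0}{8}\sum_{j=0}^{k-1}\rho^{k-1-j}\theta^{j+1}.
\]
Since $\tau\le1$, we have $1+\tau\ge 1/(1-\tau/2)$, so $\rho\le 1-\tau/2<\theta$. The ratio $\rho/\theta$ is therefore less than one, and
\[
\sum_{j=0}^{k-1}\rho^{k-1-j}\theta^{j+1}=\theta^k\sum_{m=0}^{k-1}(\rho/\theta)^m\le\frac{\theta^{k}}{1-\rho/\theta}=\frac{\theta^{k+1}}{\theta-\rho}.
\]
Here $\theta-\rho\ge(1-\tau/4)-(1-\tau/2)=\tau/4$. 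Hence the error term is at most $\frac{\tau\overline B_0}{8}\cdot\frac{4\theta^{k+1}}{\tau}\le\frac{\overline B_0}{2}\theta^{k}$.

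The total is then $\overline B_0(\rho^k+\theta^k/2)\le\frac32\overline B_0\theta^k$. This gives~\eqref{eq:residual-rate} up to the constant $3/2$. The constant $3/2$ could be reduced to $1$ by a sharper estimate of $\rho$, but that refinement is not needed: any absolute constant is absorbed in the $O(\cdot)$.

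\textbf{Step 2: from the rate to $K$.} Dropping the nonnegative Bregman term gives $\Phi(v_K)-\Phi^\star\le C\,\overline B_0(1-\tau/4)^K$ with an absolute constant $C\le 3/2$. Since $\log(1-\tau/4)\le-\tau/4$, it suffices to take
\[
K\ge\frac4\tau\log\frac{C\overline B_0}{\varepsilon}.
\]
By~\eqref{eq:tau-choice}, $1/\tau=\max\{1,\sqrt{\delta_\omega/\mu}\}\le1+\sqrt{\delta_\omega/\mu}$. Moreover, $\log(C\overline B_0/\varepsilon)\le\log C+\log(1+\overline B_0/\varepsilon)$ is $O(\log(1+\overline B_0/\varepsilon))$ whenever this logarithm is bounded below. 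The remaining degenerate case $\overline B_0\lesssim\varepsilon$ is handled with $K=O(1)$: a constant number of iterations already brings $C\overline B_0\theta^K$ below $\varepsilon$, since $\theta\le 1-1/(4(1+\sqrt{\delta_\omega/\mu}))$. Taking
\[
K=\Bigl\lceil 4\bigl(1+\sqrt{\delta_\omega/\mu}\bigr)\bigl(\log(1+\overline B_0/\varepsilon)+\log C\bigr)\Bigr\rceil
\]
covers all cases.

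\textbf{Step 3: communication count.} Each outer iteration of Algorithm~\ref{alg:tiger-corrected} performs exactly two synchronizations, at $y_k$ and at $v_{k+1}$, while the residual-certified local solve makes none. The number of synchronized rounds is therefore $2K$, which has the stated order.
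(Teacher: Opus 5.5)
Your proof is correct, and its skeleton matches the paper's: a geometric bound of the form \eqref{eq:residual-rate}, then $(1-\tau/4)^K\le\exp(-\tau K/4)$, then two queries per iteration. You reach the geometric bound by a different route, though. The paper never sums the unrolled series. In your notation ($\rho=(1+\tau)^{-1}$, $\theta=1-\tau/4$), it runs an induction on the one-step recursion $\mathcal L_{k+1}\le\rho\,\mathcal L_k+\tfrac{\tau\overline B_0}{8}\theta^{k+1}$, using only $\rho+\tau\theta/8\le1-3\tau/8\le\theta$. This gives \eqref{eq:residual-rate} with constant exactly $1$, so $K=\lceil(4/\tau)\log(1+\overline B_0/\varepsilon)\rceil$ yields $\Phi(v_K)-\Phi^\star\le\overline B_0\varepsilon/(\overline B_0+\varepsilon)\le\varepsilon$ with no case analysis.

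Your explicit summation is equally valid. However, the factor $3/2$ comes from replacing the finite sum by the infinite series, not from a loose estimate of $\rho$. At $k=0$ the sum is empty, yet your bound is already $\overline B_0(1+\theta/2)$. If you keep the finite sum, $\sum_{m=0}^{k-1}\rho^m\theta^{k-m}=\theta(\theta^k-\rho^k)/(\theta-\rho)$, the total is at most $\tfrac12\overline B_0(\rho^k+\theta^k)\le\overline B_0\theta^k$.

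The constant $C>1$ is also what forces your degenerate-case discussion, and that discussion is slightly off. For $\overline B_0\approx\varepsilon$ your route needs about $(4/\tau)\log C$ iterations, which is $O(1+\sqrt{\delta_\omega/\mu})$ rather than $O(1)$. So your final displayed $K$ does not scale with $\log(1+\overline B_0/\varepsilon)$ in that regime. There are two clean repairs:
\begin{itemize}
\item Split cases. If $\overline B_0\le\varepsilon$, then $K=0$ already suffices by \eqref{eq:initial-scale}. If $\overline B_0>\varepsilon$, then $\log(1+\overline B_0/\varepsilon)\ge\log2$, which absorbs $\log C$.
\item Use $\log(1+Cx)\le C\log(1+x)$ for $C\ge1$ and $x\ge0$. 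This lets you take $K=\lceil(4C/\tau)\log(1+\overline B_0/\varepsilon)\rceil$ uniformly.
\end{itemize}
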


% \begin{proof}
% Equation~\eqref{eq:residual-rate} implies
% \[
%  \Phi(v_K)-\Phi^\star
%  \le \overline B_0\exp(-\tau K/4).
% \]
% Indeed, $-\log(1-\tau/4)\ge\tau/4$.  Thus
% $K=O\bigl(\tau^{-1}\log(1+\overline B_0/\varepsilon)\bigr)$ makes the
% right-hand side at most \(\varepsilon\).  Finally,
% $\tau^{-1}=\max\{1,\sqrt{\delta_{\omega}/\mu}\}$, and the two synchronized
% evaluations of \(\nabla p\) in each iteration change the count only by a
% constant factor.
% \end{proof}

\paragraph{Finite local realization.}
The communication theorem requires only the computable residual in
Algorithm~\ref{alg:tiger-corrected}.
Appendix~\ref{app:local-proofs} proves that, in a fixed quadratic metric, a
restarted accelerated proximal-gradient routine reaches this residual in
finitely many local evaluations under uniform smoothness, strong convexity,
an available metric proximal map, and a warm-start bound.  No additional
communication is used by the inner routine.

Table~\ref{tab:method-comparison} compares the assumptions and local
primitives behind the communication bounds.  Fixed and Adaptive \TIGER{} are
listed separately because the adaptive method selects curvature parameters
during the run and counts the queries spent on backtracking.  The quantities
$\kappa_{\rm rel}$ and
$\kappa_\varphi$ are SPAG's relative and mirror-map condition numbers, and
$\rho_{\rm D}$ is DANE's quadratic contraction factor.  For AIDE, the
displayed similarity-dependent guarantee is its quadratic result.
The quantities $\Delta_{\rm ad}$, $L_\beta$, and $L_\delta$ in the adaptive
row are defined in Corollary~\ref{cor:adaptive-communication}.

\begin{table}[ht]
\caption{Communication bounds and structural assumptions under strong
convexity.  Initialization-dependent factors inside
$\log(1/\varepsilon)$ are suppressed.}
\label{tab:method-comparison}
\centering
\footnotesize
\setlength{\tabcolsep}{2.6pt}
\renewcommand{\arraystretch}{1.16}
\begin{tabularx}{\textwidth}{
@{}
>{\raggedright\arraybackslash}p{0.12\textwidth}
>{\raggedright\arraybackslash}p{0.315\textwidth}
>{\raggedright\arraybackslash}p{0.23\textwidth}
>{\centering\arraybackslash}X
@{}}
\toprule
\textbf{Method}
& \textbf{Geometry and problem class}
& \textbf{Local computation}
& \textbf{Communication rounds}\\
\midrule

APG \citep{nesterov2013composite}
& Quadratic Hilbert geometry; convex smooth $p$
& Exact proximal map of $q$
& $O\!\left(
  (1+\sqrt{\delta/\mu})\log(1/\varepsilon)
  \right)$\\
\midrule

DANE \citep{shamir2014communication}
& Euclidean corrected models; classical quadratic guarantee
& Local objectives solved at the agents and averaged
& $O\!\left(
  \log(1/\varepsilon)/\log(1/\rho_{\rm D})
  \right)$\\
\midrule

AIDE \citep{reddi2016aide}
& Euclidean unconstrained quadratics; pairwise
  $\delta$-related local objectives
& Catalyst acceleration with parallel inexact DANE solves
& $\widetilde O\!\left(
  (1+\sqrt{\delta/\mu})\log(1/\varepsilon)
  \right)$\\
\midrule

SPAG \citep{hendrikx2020statistically}
& Loss-induced Bregman map; smooth and strongly convex mirror map
& Exact Bregman-preconditioned server step
& $O\!\left(
  \sqrt{\kappa_{\rm rel}\kappa_\varphi}
  \log(1/\varepsilon)
  \right)$;
  quadratic mirror:
  $O\!\left(
  \sqrt{\kappa_{\rm rel}}\log(1/\varepsilon)
  \right)$\\
\midrule

PAUS \citep{beznosikov2024bregman}
& General Bregman geometry; composite strongly monotone
  variational inequality
& Server-local stochastic composite Mirror--Prox procedure
& $O\!\left(
  (\delta/\mu)\log(1/\varepsilon)
  \right)$ in the deterministic specialization\\
\midrule

OGS \citep{kovalev2022optimal}
& Quadratic Hilbert geometry; $p$ may be nonconvex
& Inexact Euclidean auxiliary solves
& $O\!\left(
  (1+\sqrt{\delta/\mu})\log(1/\varepsilon)
  \right)$\\
\midrule

\TIGERbold{} \textbf{(this paper)}
& Problem-selected Bregman geometry; constrained composite model;
  $p$ may be nonconvex
& Bregman proximal-gradient subproblem with a computable residual
& $O\!\left(
  (1+\sqrt{\delta_{\omega}/\mu})\log(1/\varepsilon)
  \right)$\\
\midrule

\textbf{Adaptive} \TIGERbold{} \textbf{(this paper)}
& Same Bregman composite class; $\beta$ and the similarity estimate are
  selected during the run
& Residual-controlled local solve; secant and model-error absorption conditions
& $O\!\left(
  (1+\sqrt{\Delta_{\rm ad}/\mu})\log(1/\varepsilon)
  \right)$
  $+\,O(L_\beta+L_\delta)$\\
\bottomrule
\end{tabularx}
\end{table}

\section{Adaptive parameter selection for \TIGER{}}
\label{sec:adaptive-tiger}

Algorithm~\ref{alg:tiger-corrected} is stated with fixed values of the
stabilization coefficient $\beta$ and the geometry-adjusted similarity
parameter $\delta_\omega$.  Sharp global values are rarely available and
the analytic bounds may be conservative.  Adaptive \TIGER{} retains the
local and mirror updates of the fixed-parameter method, but chooses these
two quantities by backtracking.  This section gives the acceptance
conditions and their convergence consequences.  The complete procedure is
stated as Algorithm~\ref{alg:adaptive-tiger} in the appendix.

At iteration $k$, let $\beta>0$ and
$\widehat\delta_\omega>0$ denote the current trial values.  The acceleration
weight and extrapolation point are recomputed as
\[
 \tau_k=
 \left(
  \max\left\{1,\sqrt{\widehat\delta_\omega/\mu}\right\}
 \right)^{-1},
 \qquad
 y_k=\tau_kx_k+(1-\tau_k)v_k.
\]
 Using the current $\beta$, the
local routine seeks $v_{k+1}\in\X$ and
\[
 e_k\in
 \nabla p(y_k)+\partial q(v_{k+1})
 +\beta\bigl(\nabla\omega(v_{k+1})-\nabla\omega(y_k)\bigr),
 \qquad
 \dnorm{e_k}^{2}\le\xi_k,
\]
where $\xi_k$ is the prescribed accuracy of the local solve.  After the
synchronized query at $v_{k+1}$, define
\begin{equation}
 h_k:=
 \beta\bigl(\nabla\omega(v_{k+1})-\nabla\omega(y_k)\bigr)
 -\bigl(\nabla p(v_{k+1})-\nabla p(y_k)\bigr),
 \qquad
 S_k:=\ip{h_k}{v_{k+1}-y_k}.
 \label{eq:adaptive-secant-test}
\end{equation}
Since $h_k=\nabla R_\beta(v_{k+1})-\nabla R_\beta(y_k)$, the condition
$S_k\ge0$ checks monotonicity of the stabilized model error along the
computed secant.  If the local routine does not reach the prescribed
residual accuracy, or if the resulting candidate has $S_k<0$, the method
doubles $\beta$ and solves the local model again.  The query at $y_k$ is
unchanged and can be reused.

Once $S_k\ge0$, the vector
$g_k:=e_k-h_k$ belongs to
$\nabla p(v_{k+1})+\partial q(v_{k+1})$.  The method therefore uses the same
mirror update as fixed-parameter \TIGER{}:
\begin{equation}
 x_{k+1}:=
 \argmin_{u\in\X}
 \left\{
  \frac{\tau_k}{\mu}\ip{g_k}{u}
  +\tau_kD_\omega(u,v_{k+1})
  +D_\omega(u,x_k)
 \right\}.
 \label{eq:adaptive-outer-trial}
\end{equation}
Set
$ C_k:=\ip{h_k}{x_{k+1}-x_k},$
and
$ V_k:=D_\omega(x_{k+1},x_k).$
The iteration is accepted when
\begin{equation}
 \tau_k C_k\le S_k+\mu V_k.
 \label{eq:adaptive-signed-test}
\end{equation}
This inequality is the model-error absorption condition used in the convergence
proof.  In particular, it holds automatically when $C_k\le0$.  If it is
violated, $\widehat\delta_\omega$ is doubled and the candidate is recomputed,
including $\tau_k$ and $y_k$.  The two backtracking rules consequently
estimate different features of the problem: $\beta$ makes the local model
error monotone along the observed secant, whereas
$\widehat\delta_\omega$ determines how aggressively that error can be
absorbed by the accelerated mirror step.

The next lemma shows that these computable conditions are sufficient for
the same one-step estimate used in the fixed-parameter analysis.

\begin{lemma}[Accepted adaptive step]
\label{lem:adaptive-one-step}
Suppose Assumption~\ref{ass:outer} holds.  Every accepted iteration of
Adaptive \TIGER{} satisfies
\[
 \mu(1+\tau_k)D_\omega(x^\star,x_{k+1})
 +\Phi(v_{k+1})-\Phi^\star
 \le
 \mu D_\omega(x^\star,x_k)
 +(1-\tau_k)\bigl(\Phi(v_k)-\Phi^\star\bigr)
 +\frac{\xi_k}{2\mu}.
\]
\end{lemma}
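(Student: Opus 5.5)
The plan is to derive the one-step inequality from three ingredients: the optimality condition of the mirror step \eqref{eq:adaptive-outer-trial}, relative strong convexity (Assumption~\ref{ass:outer}) applied at $v:=v_{k+1}$, and the acceptance test \eqref{eq:adaptive-signed-test}. The test takes the place that Lemma~\ref{lem:model-error-four-point} plays in the fixed-parameter analysis. Write $\tau=\tau_k$. The starting observation is that $g_k=e_k-h_k$, and that $g_k=\nabla p(v)+s$ for some $s\in\partial q(v)$ by the cancellation noted in the text, so Assumption~\ref{ass:outer} can be used with the vector $g_k$.

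\textbf{Step 1 (mirror step).} Write the variational inequality for \eqref{eq:adaptive-outer-trial} at $u=x^\star$ and apply the Bregman three-point identity to both divergence terms. Multiplied by $\mu$, this gives
\[
\tau\ip{g_k}{x_{k+1}-x^\star}\le \mu\tau\bigl[D_\omega(x^\star,v)-D_\omega(x^\star,x_{k+1})-D_\omega(x_{k+1},v)\bigr]+\mu\bigl[D_\omega(x^\star,x_k)-D_\omega(x^\star,x_{k+1})-D_\omega(x_{k+1},x_k)\bigr].
\]

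\textbf{Step 2 (relative strong convexity).} Apply Assumption~\ref{ass:outer} at $v$ with comparison points $x^\star$ and $v_k$, and form the convex combination with weights $\tau$ and $1-\tau$. Using $y_k-\tau x_k=(1-\tau)v_k$, the left-hand inner products collapse to $\ip{g_k}{v-y_k}+\tau\ip{g_k}{x_k-x^\star}$. The result is
\[
\Phi(v)-\Phi^\star-(1-\tau)\bigl(\Phi(v_k)-\Phi^\star\bigr)+\mu\tau D_\omega(x^\star,v)+\mu(1-\tau)D_\omega(v_k,v)\le \ip{g_k}{v-y_k}+\tau\ip{g_k}{x_k-x_{k+1}}+\tau\ip{g_k}{x_{k+1}-x^\star}.
\]
I insert Step 1 into the last term. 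The terms $\mu\tau D_\omega(x^\star,v)$ then cancel, and $\mu(1+\tau)D_\omega(x^\star,x_{k+1})$ appears on the left-hand side.

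\textbf{Step 3 (splitting $g_k=e_k-h_k$).} This is the delicate step. The $h_k$-part of the remaining terms $\ip{g_k}{v-y_k-\tau(x_{k+1}-x_k)}$ equals exactly $-S_k+\tau C_k$, by the definitions in \eqref{eq:adaptive-secant-test}. The acceptance test bounds this by $\mu V_k=\mu D_\omega(x_{k+1},x_k)$, and that quantity is cancelled by the negative term coming from Step 1. For the $e_k$-part I again use $y_k=\tau x_k+(1-\tau)v_k$ to rewrite it as
\[
\tau\ip{e_k}{v-x_{k+1}}+(1-\tau)\ip{e_k}{v-v_k}.
\]
Each piece is bounded by Fenchel--Young together with $D_\omega\ge\norm{\cdot}^2/2$. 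This gives at most $\mu\tau D_\omega(x_{k+1},v)+\tau\dnorm{e_k}^2/(2\mu)$ for the first piece and at most $\mu(1-\tau)D_\omega(v_k,v)+(1-\tau)\dnorm{e_k}^2/(2\mu)$ for the second. These Bregman terms are exactly the leftovers $-\mu\tau D_\omega(x_{k+1},v)$ from Step 1 and $\mu(1-\tau)D_\omega(v_k,v)$ from Step 2, so they cancel. What remains is $\dnorm{e_k}^2/(2\mu)\le\xi_k/(2\mu)$, and rearranging gives the claim.

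The main obstacle is the bookkeeping in Step 3. One has to make sure that the $v_k$-comparison is taken with its $\mu D_\omega(v_k,v)$ term retained, since that term is what absorbs the residual along $v-v_k$. One also has to check that the cross terms involving $h_k$ combine into exactly $-S_k+\tau C_k$, so that the computable test can substitute for Assumption~\ref{ass:correction}. Note that $S_k\ge0$ itself is not used in this estimate; it only ensures that the candidate is well defined for the acceptance procedure.
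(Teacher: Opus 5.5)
Your proposal is correct and follows essentially the same route as the paper. It uses the mirror-step variational inequality with the three-point identity, then relative strong convexity at $v_{k+1}$ against $x^\star$ and $v_k$, then the split $g_k=e_k-h_k$, whose $h_k$-part is exactly $-S_k+\tau_kC_k$ and is absorbed by the acceptance test against $-\mu V_k$. The only cosmetic difference is that you absorb the residual with two separate Fenchel--Young estimates, whereas the paper first uses $\tau_kD_\omega(x_{k+1},v_{k+1})+(1-\tau_k)D_\omega(v_k,v_{k+1})\ge\frac12\norm{w}^2$ with $w=v_{k+1}-y_k+\tau_k(x_k-x_{k+1})$ and applies Fenchel--Young once; both give the same term $\xi_k/(2\mu)$.
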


The lemma depends only on the residual bound, $S_k\ge0$, and
\eqref{eq:adaptive-signed-test}.  A global similarity constant is needed
only to show that the backtracking terminates and that the accepted
weights do not become arbitrarily small.

\begin{theorem}[Convergence with adaptive parameters]
\label{thm:adaptive-rate}
Suppose the conditions of Lemma~\ref{lem:adaptive-one-step} hold and the
accepted weights satisfy $\tau_k\ge\underline\tau>0$.  Let
\[
 \overline B_0
 \ge
 \Phi(v_0)-\Phi^\star
 +\mu(1+\underline\tau)D_\omega(x^\star,x_0),
 \qquad
 \rho:=1-\frac{\underline\tau}{4},
\]
and choose the local accuracies so that
\[
 \frac{\xi_k}{2\mu}
 \le
 \frac{\underline\tau\,\overline B_0}{8}\rho^{k+1}.
\]
Then, for every $K\ge0$,
\[
 \Phi(v_K)-\Phi^\star
 +\mu(1+\underline\tau)D_\omega(x^\star,x_K)
 \le
 \overline B_0\rho^K.
\]
If $x_0=v_0$, the computable value
$\overline B_0=4\dnorm{g_0}^2/\mu$ from
Proposition~\ref{prop:initial-bound} is admissible.
\end{theorem}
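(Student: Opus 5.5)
Starting from Lemma~\ref{lem:adaptive-one-step}, the plan is to turn its varying-weight one-step inequality into a contraction with the fixed factor $\rho$, and then unroll it. Write $\Delta_k:=\Phi(v_k)-\Phi^\star\ge0$ and $D_k:=D_\omega(x^\star,x_k)\ge0$, and set $\mathcal L_k:=\Delta_k+\mu(1+\underline\tau)D_k$.

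The first step is a per-step contraction in $\mathcal L_k$. Since $\tau_k\ge\underline\tau$, the left-hand side of Lemma~\ref{lem:adaptive-one-step} dominates $\Delta_{k+1}+\mu(1+\underline\tau)D_{k+1}=\mathcal L_{k+1}$. On the right-hand side we have $\mu D_k=\frac{1}{1+\underline\tau}\mu(1+\underline\tau)D_k$ and $(1-\tau_k)\Delta_k\le(1-\underline\tau)\Delta_k$. Both coefficients are at most $\max\{(1+\underline\tau)^{-1},1-\underline\tau\}=(1+\underline\tau)^{-1}$, because $1-t\le(1+t)^{-1}$. This gives
\[
 \mathcal L_{k+1}\le\frac{\mathcal L_k}{1+\underline\tau}+\frac{\xi_k}{2\mu}.
\]
Since $\underline\tau\le1$, we have $(1+\underline\tau)^{-1}\le1-\underline\tau/2\le\rho$. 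So $\mathcal L_{k+1}\le\rho'\mathcal L_k+\frac{\xi_k}{2\mu}$, where I keep the sharper factor $\rho':=1-\underline\tau/2$ because the error sum needs the gap between $\rho'$ and $\rho$.

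The second step is to prove $\mathcal L_K\le\overline B_0\rho^K$ by induction on $K$. The base case $K=0$ is the hypothesis on $\overline B_0$. For the inductive step,
\[
 \mathcal L_{K+1}\le\rho'\overline B_0\rho^K+\frac{\underline\tau\overline B_0}{8}\rho^{K+1}=\overline B_0\rho^{K+1}\Bigl(\frac{\rho'}{\rho}+\frac{\underline\tau}{8}\Bigr).
\]
It remains to check that $\rho'+\rho\underline\tau/8\le\rho$, i.e.\ $1-\underline\tau/2+\underline\tau/8-\underline\tau^2/32\le1-\underline\tau/4$. This holds because $-\tfrac12+\tfrac18=-\tfrac38\le-\tfrac14$. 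Unrolling the geometric sum directly would also work, but the induction avoids it.

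The last claim, that $\overline B_0=4\dnorm{g_0}^2/\mu$ is admissible, I take from Proposition~\ref{prop:initial-bound}. The one point to verify is that it covers the weight $\mu(1+\underline\tau)$. That bound is derived for the weight $\mu(1+\tau)\le2\mu$, and $\underline\tau\le1$ as well, so the same argument applies.

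The main obstacle is the first step. The weights $\tau_k$ vary, so one cannot reuse Lemma~\ref{lem:rates} directly. The key observation is that both coefficients $1-\tau_k$ and the mirror coefficient can be controlled uniformly by $\underline\tau$. This only works because the Lyapunov weight $\mu(1+\underline\tau)$ is frozen at the lower bound, which is valid since the left-hand side only decreases when $\tau_k$ is replaced by $\underline\tau$.
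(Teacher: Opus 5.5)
Your proposal is correct and follows the paper's proof essentially step for step. You freeze the Lyapunov weight at $\mu(1+\underline\tau)$, use $\tau_k\ge\underline\tau$ and $1-\underline\tau\le(1+\underline\tau)^{-1}$ to get $\mathcal L_{k+1}\le\mathcal L_k/(1+\underline\tau)+\xi_k/(2\mu)$, close the induction via $(1+\underline\tau)^{-1}+\underline\tau\rho/8\le1-3\underline\tau/8\le\rho$, and, as the paper does, obtain admissibility of $4\dnorm{g_0}^2/\mu$ from Proposition~\ref{prop:initial-bound} with $\tau$ replaced by $\underline\tau\le1$.
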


The communication bound requires both backtracking loops to terminate after
finitely many trials.  We use the following uniform bounds.

\begin{assumption}[Uniform backtracking bounds]
\label{ass:adaptive-backtracking}
There exist finite constants
$\overline\beta>0$ and $\overline\delta_\omega\ge0$ with the following
properties.  For every local subproblem generated by the adaptive procedure,
the local routine returns
$ z\in\X,$
and $ e\in\partial\mathcal M_y(z)$.  Whenever
$\beta\ge\overline\beta$, the returned pair satisfies both
$\dnorm{e}^{2}\le\xi_k$ and the secant condition $S\ge0$.
Moreover, every queried candidate with $S\ge0$ satisfies
\[
 \left|\ip{h}{a-b}\right|^2
 \le
 2\overline\delta_\omega S D_\omega(a,b)
 \qquad
 \text{for all }a,b\in\X,
\]
where $h$ and $S$ are defined in
\eqref{eq:adaptive-secant-test}.
\end{assumption}
For a fixed stabilization coefficient,
Assumption~\ref{ass:correction} and
Lemma~\ref{lem:model-error-four-point} imply the second part with
$\overline\delta_\omega=\delta_\omega$.  When $\beta$ varies, uniformity
over the queried values is an additional condition.

\begin{corollary}[Communication complexity of Adaptive \TIGER{}]
\label{cor:adaptive-communication}
Suppose Assumptions~\ref{ass:outer} and
\ref{ass:adaptive-backtracking} hold.  For initial estimates
$\beta_0,\widehat\delta_{\omega,0}>0$, define
\[
 L_\beta:=
 \max\left\{
  0,
  \left\lceil\log_2\left(\frac{\overline\beta}{\beta_0}\right)\right\rceil
 \right\},
 \qquad
 L_\delta:=
 \max\left\{
  0,
  \left\lceil
   \log_2\left(
    \frac{\overline\delta_\omega}{\widehat\delta_{\omega,0}}
   \right)
  \right\rceil
 \right\},
\]
with $L_\delta=0$ when $\overline\delta_\omega=0$, and let
$ \Delta_{\rm ad}:=
 \max\left\{
  \widehat\delta_{\omega,0},2\overline\delta_\omega
 \right\}.$
If accepted estimates are carried monotonically between iterations and the
residual tolerances satisfy the condition of
Theorem~\ref{thm:adaptive-rate}, then Adaptive
\TIGER{} returns $v_K$ with
$\Phi(v_K)-\Phi^\star\le\varepsilon$ after
\[
 O\left(
  \left(1+\sqrt{\frac{\Delta_{\rm ad}}{\mu}}\right)
  \log\left(1+\frac{\overline B_0}{\varepsilon}\right)
  +L_\beta+L_\delta
 \right)
\]
synchronized queries.
\end{corollary}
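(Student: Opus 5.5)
The proof combines Theorem~\ref{thm:adaptive-rate} with a count of the backtracking trials. There are three steps: bound the accepted estimates uniformly, bound the number of accepted iterations, and bound the total number of rejected trials.

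\emph{Step 1: uniform bounds on accepted estimates.} Consider the $\widehat\delta_\omega$ loop first. Take a candidate with $S_k\ge0$ and $\widehat\delta_\omega\ge\overline\delta_\omega$. I apply the four-point bound of Assumption~\ref{ass:adaptive-backtracking} with $a=x_{k+1}$, $b=x_k$, and then Young's inequality:
\[
\tau_kC_k\le\tau_k\sqrt{2\overline\delta_\omega S_kV_k}\le S_k+\tfrac{\tau_k^2\overline\delta_\omega}{2}V_k .
\]
Since $\tau_k^2\widehat\delta_\omega\le\mu$, the last term is at most $\mu V_k/2\le\mu V_k$, so \eqref{eq:adaptive-signed-test} holds. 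Hence a doubling of $\widehat\delta_\omega$ is triggered only while $\widehat\delta_\omega<\overline\delta_\omega$. Starting from $\widehat\delta_{\omega,0}$ and doubling, every accepted value therefore satisfies $\widehat\delta_\omega\le\max\{\widehat\delta_{\omega,0},2\overline\delta_\omega\}=\Delta_{\rm ad}$. The argument for $\beta$ is the same. By Assumption~\ref{ass:adaptive-backtracking}, once $\beta\ge\overline\beta$ both the residual test and $S\ge0$ pass. So $\beta$ is doubled only while $\beta<\overline\beta$, and the accepted $\beta$ stays below $\max\{\beta_0,2\overline\beta\}$.

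\emph{Step 2: number of accepted iterations.} Because $\widehat\delta_\omega\le\Delta_{\rm ad}$, every accepted weight satisfies
\[
\tau_k\ge\underline\tau:=1/\max\{1,\sqrt{\Delta_{\rm ad}/\mu}\}.
\]
Theorem~\ref{thm:adaptive-rate} then applies with this $\underline\tau$. Since $-\log\rho\ge\underline\tau/4$, we get $\Phi(v_K)-\Phi^\star\le\overline B_0e^{-\underline\tau K/4}\le\varepsilon$ once
\[
K=O\bigl(\underline\tau^{-1}\log(1+\overline B_0/\varepsilon)\bigr)=O\bigl((1+\sqrt{\Delta_{\rm ad}/\mu})\log(1+\overline B_0/\varepsilon)\bigr).
\]

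\emph{Step 3: counting synchronized queries, including rejected candidates.} Each accepted iteration costs two queries, at $y_k$ and at $v_{k+1}$. A $\beta$-rejection reuses the query at $y_k$ and costs at most one extra query, at the rejected $v_{k+1}$. A $\widehat\delta_\omega$-rejection recomputes $\tau_k$ and $y_k$ and costs at most two extra queries.

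This is where the monotone carrying of accepted estimates is essential. Estimates are never decreased, and by Step~1 each doubling happens only below its threshold. Hence over the whole run there are at most $L_\beta$ doublings of $\beta$ and at most $L_\delta$ doublings of $\widehat\delta_\omega$ (with none when $\overline\delta_\omega=0$). This bound is global, not per iteration, so the rejected trials contribute $O(L_\beta+L_\delta)$ queries in total.

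Adding the three contributions gives the stated bound.

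\textbf{Main obstacle.} The hardest step is the interaction between the two loops. A $\widehat\delta_\omega$-rejection changes $y_k$, so the local subproblem changes and a new $\beta$-search may be triggered. I would handle this by noting that each $\beta$-rejection, in any inner search, permanently doubles the carried $\beta$. The total count of such rejections is therefore still bounded by $L_\beta$, independently of how the two loops interleave.
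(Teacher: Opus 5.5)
Your proposal is correct and follows essentially the same route as the paper. Both arguments use the four-point bound with $(a,b)=(x_{k+1},x_k)$ and Young's inequality to show that rejections occur only below the thresholds $\overline\beta$ and $\overline\delta_\omega$. Monotone doubling then bounds the global rejection counts by $L_\beta$ and $L_\delta$ in the ledger $2K+2D+B$, and $\widehat\delta_\omega\le\Delta_{\rm ad}$ gives $\tau_k\ge\underline\tau$, so Theorem~\ref{thm:adaptive-rate} bounds $K$. The only difference is cosmetic: you split Young's inequality as $S_k+\tfrac12\tau_k^2\overline\delta_\omega V_k$ instead of the paper's $\tfrac12 S_k+\tau_k^2\overline\delta_\omega V_k$, and either split suffices.
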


Thus Adaptive \TIGER{} retains the asymptotic communication complexity of
the fixed-parameter method.  The additional logarithmic terms account for
the backtracking required when the initial parameter estimates are too small.

\section{Special cases and applications}
\label{sec:geometry}

This section verifies Assumption~\ref{ass:correction} in quadratic Hilbert and
entropy geometries.  The first two subsections state explicit cocoercivity
bounds and give proof sketches; complete proofs and implementation formulas
are in the appendix.  The final subsection applies the entropy result to
fixed-support entropic Wasserstein barycenters.

\subsection{Fixed Hilbert specialization}

Let $A=A^\top\succ0$ and take the quadratic distance-generating function
\[
 \omega(x)=\frac12\Anorm{x}^2,
 \qquad
 D_\omega(y,x)=\frac12\Anorm{y-x}^2.
\]

\begin{proposition}[Model-error bound under Hessian similarity]
\label{prop:hilbert-cocoercivity}
Suppose that condition~\eqref{eq:reference-hilbert-similarity} holds with
$\delta\ge0$.
With $\beta=2\delta$, the function $R_\beta(x)=2\delta\omega(x)-p(x)$ is
convex and $3\delta$-smooth with respect to the $A$-norm; when $\delta>0$,
it is $\delta$-strongly convex.
Moreover, Assumption~\ref{ass:correction} holds with
\begin{equation*}
 \delta_{\omega}=3\delta.
\end{equation*}
\end{proposition}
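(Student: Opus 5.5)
The plan is to work at the level of Hessians. With $\omega(x)=\frac12\Anorm{x}^2$ we have $\nabla^2\omega=A$, so $\nabla^2R_\beta(x)=2\delta A-\nabla^2p(x)$ for $x\in\X$. The Hessian form of condition~\eqref{eq:reference-hilbert-similarity}, namely $-\delta A\preceq\nabla^2p(x)\preceq\delta A$, then immediately gives
\[
 \delta A\preceq\nabla^2R_\beta(x)\preceq3\delta A,\qquad x\in\X .
\]
Integrating along the segment $y+t(x-y)$, which stays in $\X$ by convexity, yields the following:
\begin{itemize}
\item convexity of $R_\beta$;
\item $\delta$-strong convexity in the $A$-norm when $\delta>0$, by the lower bound;
\item the gradient estimate $\Adnorm{\nabla R_\beta(x)-\nabla R_\beta(y)}\le3\delta\Anorm{x-y}$ from the upper bound, which is the $3\delta$-smoothness claim.
\end{itemize}
For the gradient estimate I will use the averaged Hessian $H:=\int_0^1\nabla^2R_\beta(y+t(x-y))\,dt$. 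It satisfies $\delta A\preceq H\preceq3\delta A$ and $\nabla R_\beta(x)-\nabla R_\beta(y)=H(x-y)$.

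For cocoercivity I plan to work directly with this symmetric $H$, rather than invoking the general Baillon--Haddad theorem. The general theorem would need $R_\beta$ convex and smooth on all of $\R^d$, which is awkward on a constrained $\X$. Set $u=x-y$ and $M:=A^{-1/2}HA^{-1/2}$, so that $0\preceq M\preceq3\delta I$. Then
\[
 \Adnorm{Hu}^2=u^\top HA^{-1}Hu=w^\top M^2w,\qquad \ip{Hu}{u}=w^\top Mw,\qquad w=A^{1/2}u .
\]
Since $M^2\preceq3\delta M$ for a symmetric $M$ with spectrum in $[0,3\delta]$, we get $\Adnorm{Hu}^2\le3\delta\ip{Hu}{u}$. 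This is exactly Assumption~\ref{ass:correction} with $\delta_\omega=3\delta$, because $\dnorm{\cdot}=\Adnorm{\cdot}$ in this geometry.

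The only subtle point is that the estimate must use the segment-averaged Hessian as a single symmetric operator. Applying pointwise bounds separately to the numerator and the denominator would lose the cocoercivity structure. Averaging preserves both symmetry and the sandwich bounds, so this goes through.

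The degenerate case $\delta=0$ also needs a brief check. There $\nabla^2p\equiv0$ on $\X$ and $R_\beta$ is affine along segments, so both sides of the inequality vanish. The assumption then holds with $\delta_\omega=0$, and no strong convexity is claimed.
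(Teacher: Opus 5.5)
Your proposal is correct and follows essentially the same route as the paper: the sandwich $\delta A\preceq\nabla^2R_\beta\preceq3\delta A$, then the segment-averaged Hessian conjugated by $A^{-1/2}$ into a symmetric operator with spectrum in $[0,3\delta]$, then $M^2\preceq3\delta M$, which gives $\Adnorm{\nabla R_\beta(x)-\nabla R_\beta(y)}^2\le3\delta\,\ip{\nabla R_\beta(x)-\nabla R_\beta(y)}{x-y}$. Your separate check of $\delta=0$ is harmless but not needed, since the general argument already covers it with $M=0$.
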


\noindent\textit{Proof sketch.}
The similarity bound and \(\beta=2\delta\) give
$\delta A\preceq\nabla^2R_\beta\preceq3\delta A$.  Averaging this Hessian
along the segment from \(x\) to \(y\) produces a positive-semidefinite
operator \(B\) with \(B^2\preceq3\delta B\).  Hence
\[
 \Adnorm{\nabla R_\beta(y)-\nabla R_\beta(x)}^2
 \le 3\delta\,
 \ip{\nabla R_\beta(y)-\nabla R_\beta(x)}{y-x}.
\]
This is Assumption~\ref{ass:correction} with
$\delta_{\omega}=3\delta$.  The full operator calculation is given in
Appendix~\ref{app:proof-hilbert-cocoercivity}.

Thus, in fixed Hilbert geometry, the geometry-adjusted similarity parameter
is controlled by the server-to-global Hessian-similarity scale.

\subsection{Entropy specialization on the simplex}

Let $d\ge2$ and $0<\alpha<1/d$, and define
\begin{equation*}
 \Delta_d:=\{x\in\R^d:x_j\ge0,\ \mathbf1^\top x=1\},
 \qquad
 \Delta_d^\alpha:=\{x\in\Delta_d:x_j\ge\alpha,
 \ j=1,\ldots,d\}.
\end{equation*}
We take
\begin{equation*}
 \omega(x)=\sum_{i=1}^d x_i\log x_i,
 \qquad
 \norm{v}=\norm{v}_1
 \quad\text{whenever }\mathbf1^\top v=0.
\end{equation*}
The entropy Bregman divergence satisfies
\begin{equation}
 D_\omega(y,x)=\KL(y\Vert x)=\sum_i y_i\log\frac{y_i}{x_i}
 \ge\frac12\norm{y-x}_1^2.
 \label{eq:pinsker}
\end{equation}
For the simplex, the feasible directions are the vectors
$v\in\R^d$ satisfying $\mathbf1^\top v=0$.  We use
$\norm{v}=\norm{v}_1$ on these directions, and all gradients and Hessians
below are understood along them.

\begin{proposition}[Model-error bound in entropy geometry]
\label{prop:entropy-cocoercivity}
Let $R$ be twice continuously differentiable along the simplex on a
neighborhood of $\Delta_d^\alpha$.  Suppose that, for every
$x\in\Delta_d^\alpha$ and $v\in\R^d$ such that $\mathbf1^\top v=0$,
\begin{equation*}
 0\le v^\top\nabla^2R(x)v
 \le\delta_{\omega}\norm{v}_1^2.
\end{equation*}
Then $R$ is convex on $\Delta_d^\alpha$ and, for every
$x,y\in\Delta_d^\alpha$,
\begin{equation*}
 \dnorm{\nabla R(y)-\nabla R(x)}^2
 \le \delta_{\omega}
 \ip{\nabla R(y)-\nabla R(x)}{y-x}.
\end{equation*}
Consequently, when $R=R_\beta$, Assumption~\ref{ass:correction} holds with
parameter $\delta_{\omega}$ and $D_\omega=\KL$.
\end{proposition}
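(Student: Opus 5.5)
The argument mirrors the Hilbert case (Proposition~\ref{prop:hilbert-cocoercivity}), with the quadratic form $v\mapsto\norm{v}_1^2$ in place of $\Anorm{v}^2$. Fix $x,y\in\Delta_d^\alpha$ and set $u:=y-x$, so that $\mathbf1^\top u=0$. Since $\Delta_d^\alpha$ is convex, the segment $x+tu$, $t\in[0,1]$, stays in $\Delta_d^\alpha$. Define the averaged Hessian
\[
 B:=\int_0^1\nabla^2R(x+tu)\,dt,
\]
regarded as a symmetric bilinear form on the subspace $T:=\{v:\mathbf1^\top v=0\}$. Then $\nabla R(y)-\nabla R(x)=Bu$ as functionals on $T$. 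The dual norm is taken on $T$, so $\dnorm{s}=\sup\{\ip{s}{v}:v\in T,\ \norm{v}_1\le1\}$, and only the action of $s$ on $T$ matters.

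\textbf{Convexity.} Integrating the hypothesis gives $0\le v^\top Bv\le\delta_\omega\norm{v}_1^2$ for all $v\in T$. Because the Hessian is positive semidefinite along $T$ at every point of $\Delta_d^\alpha$, and $\Delta_d^\alpha$ is convex, $R$ is convex on $\Delta_d^\alpha$. The same two-sided bound holds for $B$.

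\textbf{Cocoercivity.} I would use Cauchy--Schwarz for the positive semidefinite form $B$ on $T$, and avoid any operator-square argument such as $B^2\preceq\delta_\omega B$, which is unavailable for a non-Hilbert norm. For any $v\in T$ with $\norm{v}_1\le1$,
\[
 \ip{Bu}{v}^2\le (u^\top Bu)(v^\top Bv)\le (u^\top Bu)\,\delta_\omega\norm{v}_1^2\le\delta_\omega\, u^\top Bu .
\]
Taking the supremum over such $v$ gives
\[
 \dnorm{\nabla R(y)-\nabla R(x)}^2\le\delta_\omega\,u^\top Bu=\delta_\omega\ip{\nabla R(y)-\nabla R(x)}{y-x},
\]
which is the claimed inequality. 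The degenerate case $\delta_\omega=0$ is covered by the same computation: then $B=0$ on $T$.

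\textbf{The consequence for $R=R_\beta$.} Assumption~\ref{ass:correction} requires convexity of $R_\beta$ together with this inequality, both established above. The pairing $D_\omega=\KL$ with $\norm{\cdot}_1$ is consistent with the $1$-strong convexity required in \eqref{eq:omega-strong}, by Pinsker's inequality \eqref{eq:pinsker}.

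The main point of care is bookkeeping rather than the estimate. Gradients are defined only modulo multiples of $\mathbf1$, so the dual norm must be read as the quotient norm on $T$. That quotient norm equals $\min_c\norm{s-c\mathbf1}_\infty$, which is half the oscillation $\max_j s_j-\min_j s_j$; I would note this explicitly so the statement is unambiguous. The twice-differentiability assumption on a neighborhood of $\Delta_d^\alpha$ is what justifies differentiating along the segment and writing $\nabla R(y)-\nabla R(x)=Bu$.
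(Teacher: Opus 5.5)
Your proposal is correct and follows the paper's proof essentially step for step: the averaged Hessian along the segment, Cauchy--Schwarz for the resulting positive semidefinite form on tangent directions, the upper Hessian bound, a supremum over tangent $v$ with $\norm{v}_1\le1$, and convexity from the lower bound. Your remarks on the quotient dual norm and Pinsker's inequality match Lemma~\ref{lem:entropy-identities} and the paper's main-text sketch.
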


\noindent\textit{Proof sketch.}
Averaging $\nabla^2R$ along the segment from $x$ to $y$ and applying
Cauchy--Schwarz for the resulting positive-semidefinite matrix gives, for
every tangent vector $v$ with $\norm{v}_1\le1$,
\[
 \left|\ip{\nabla R(y)-\nabla R(x)}{v}\right|^2
 \le \delta_{\omega}
 \ip{\nabla R(y)-\nabla R(x)}{y-x}.
\]
Taking the supremum over such $v$ proves the asserted cocoercivity bound, and
the lower Hessian bound gives convexity.  For $R=R_\beta$,
Lemma~\ref{lem:model-error-four-point} and
Pinsker's inequality~\eqref{eq:pinsker} then supply the four-point bound.
The complete proof is given in
Appendix~\ref{app:proof-entropy-cocoercivity}.

For $R=R_\beta$, the quadratic form in
Proposition~\ref{prop:entropy-cocoercivity} is
\[
 v^\top\nabla^2R_\beta(x)v
 =\beta\sum_i\frac{v_i^2}{x_i}-v^\top\nabla^2p(x)v.
\]
Thus the geometry-adjusted similarity parameter is obtained from an intrinsic
entropy-coordinate Hessian bound.  The quotient-dual identity and the constrained
two-center update are recorded in
Appendix~\ref{app:entropy-implementation}.  Relative strong convexity remains
a separate requirement. The next subsection verifies it for entropically
regularized Wasserstein barycenters.

\subsection{Entropic Wasserstein barycenters}
\label{sec:barycenter-specialization}

We use the fixed-support barycenter problem
\citep{agueh2011barycenters} as a concrete application of the
non-Euclidean theory.  Let
$a_1,\ldots,a_n\in\Delta_d$ have strictly positive coordinates, and let
$C\in\R_+^{d\times d}$ be a finite symmetric ground-cost matrix.  For
$x,a\in\Delta_d$, define the transportation polytope
\begin{equation*}
 \mathcal U(x,a):=
 \left\{
 \pi\in\R_+^{d\times d}:
 \pi\mathbf1=x,\ 
 \pi^\top\mathbf1=a
 \right\}
\end{equation*}
and the entropy-regularized optimal transport cost
\citep{cuturi2013sinkhorn,cuturi2014fast}
\begin{equation*}
 W_\gamma(x,a):=
 \min_{\pi\in\mathcal U(x,a)}
 \left\{
  \langle C,\pi\rangle
  +\gamma\sum_{j,\ell=1}^d
  \pi_{j\ell}\log\pi_{j\ell}
 \right\},
\end{equation*}
where $\gamma>0$ and $0\log0:=0$.
Agent $i$ stores the histogram $a_i$ and its corresponding transport loss.
We consider the barycenter problem on the truncated simplex:
\begin{equation}
 \min_{x\in\Delta_d^\alpha}
 F(x):=
 \frac1n\sum_{i=1}^n W_\gamma(x,a_i),
 \label{eq:truncated-barycenter}
\end{equation}
where  $0<\alpha<\frac1d.$
Restricting the problem to $\Delta_d^\alpha$ keeps the logarithmic
operations well defined and provides the uniform curvature bounds used
below.  In the notation of Section~\ref{sec:setting}, we set
$ \X=\Delta_d^\alpha,$
$ r\equiv0,$
$ f_i(x)=W_\gamma(x,a_i),$
$ p(x)=F(x)-f_1(x),$ and
$ q(x)=f_1(x).$
We use the negative-entropy distance-generating function
$ \omega(x)=\sum_{j=1}^d x_j\log x_j,$
and
$ D_\omega(y,x)=\KL(y\Vert x).$

Define the curvature discrepancy
\begin{equation}
 M_\alpha:=
 \max_{z\in\Delta_d^\alpha}
 \max_{\substack{
 u\in\R^d,\ \mathbf1^\top u=0\\
 \norm{u}_1=1}}
 \left|
 u^\top\nabla^2p(z)u
 \right|.
 \label{eq:barycenter-Malpha}
\end{equation}

\begin{lemma}[Barycenter relative geometry and model-error bound]
\label{lem:barycenter-relative-geometry}
For every $a\in\Delta_d$ with strictly positive coordinates, the function
$W_\gamma(\cdot,a)-\gamma\omega$ is convex on
$\Delta_d^\alpha$.  Moreover, $M_\alpha<\infty$.
Consequently, Assumption~\ref{ass:outer} holds for
problem~\eqref{eq:truncated-barycenter} with
$ \mu=\gamma,$
and Assumption~\ref{ass:correction} holds with
\begin{equation*}
 \beta=M_\alpha,
 \qquad
 \delta_\omega=
 \left(1+\frac1\alpha\right)M_\alpha.
\end{equation*}
\end{lemma}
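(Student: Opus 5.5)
The plan is to prove the three claims in order: convexity of $W_\gamma(\cdot,a)-\gamma\omega$, finiteness of $M_\alpha$, and then the two assumptions. The last step will reduce to Proposition~\ref{prop:entropy-cocoercivity} via elementary Hessian bounds in entropy coordinates.

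\emph{Step 1: relative convexity.} For $\pi\in\mathcal U(x,a)$ with row sums $\pi\mathbf1=x$, the chain rule for entropy gives
\[
\sum_{j,\ell}\pi_{j\ell}\log\pi_{j\ell}=\sum_j x_j\log x_j+\sum_{j,\ell}\pi_{j\ell}\log\frac{\pi_{j\ell}}{x_j}.
\]
Hence
\[
W_\gamma(x,a)-\gamma\omega(x)=\min_{\pi\in\mathcal U(x,a)}\Bigl\{\ip{C}{\pi}+\gamma\sum_{j,\ell}\pi_{j\ell}\log\frac{\pi_{j\ell}}{x_j}\Bigr\}.
\]
The map $(\pi_{j\ell},x_j)\mapsto \pi_{j\ell}\log(\pi_{j\ell}/x_j)$ is the perspective of $t\log t$, so it is jointly convex on $\pi\ge0$, $x>0$. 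The constraints $\pi\mathbf1=x$ and $\pi^\top\mathbf1=a$ are linear in $(\pi,x)$. Partial minimization of a jointly convex function over a convex set that depends linearly on $x$ yields a convex function of $x$. This holds on $\Delta_d^\alpha$; the minimum is attained since $\mathcal U(x,a)$ is compact and nonempty.

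\emph{Step 2: $M_\alpha<\infty$.} I would use the cited smoothness of the entropic OT value in the interior of the simplex \citep[Theorem~2]{luise2018differential}. Concretely, $W_\gamma(\cdot,a)$ is $C^2$ (indeed $C^\infty$) along the simplex on an open neighborhood of $\Delta_d^\alpha$, because all coordinates stay bounded below by something smaller than $\alpha$ there. If needed, I would justify this directly: the Sinkhorn dual potentials solve a nondegenerate smooth system (modulo the additive constant), so the implicit function theorem gives smooth dependence on $x$. Then $\nabla^2p=\frac1n\sum_{i\ge2}\nabla^2W_\gamma(\cdot,a_i)-\frac{n-1}{n}\nabla^2W_\gamma(\cdot,a_1)$ is continuous. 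The maximum in~\eqref{eq:barycenter-Malpha} is taken over a compact set, so it is finite. I expect this regularity step to be the main obstacle, since it rests on the cited differentiability result holding up to second order near the truncated simplex.

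\emph{Step 3: the assumptions.} For Assumption~\ref{ass:outer}, $\Phi-\gamma\omega=\frac1n\sum_i(W_\gamma(\cdot,a_i)-\gamma\omega)$ is convex on $\X$ by Step 1, and $f_1$ is convex since it is convex plus $\gamma\omega$. For $s\in\partial q(x)=\nabla f_1(x)+N_{\X}(x)$, the vector $\nabla p(x)+s-\gamma\nabla\omega(x)$ is a subgradient of $\Phi-\gamma\omega+\iota_\X$ at $x$. The subgradient inequality for this convex function rearranges to Assumption~\ref{ass:outer} with $\mu=\gamma$.

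For Assumption~\ref{ass:correction}, take a tangent $v$ ($\mathbf1^\top v=0$) and $x\in\Delta_d^\alpha$. Cauchy--Schwarz with $\sum_ix_i=1$ gives $\sum_i v_i^2/x_i\ge\norm{v}_1^2$. Also $\sum_i v_i^2/x_i\le\alpha^{-1}\norm{v}_2^2\le\alpha^{-1}\norm{v}_1^2$. Combining these with $|v^\top\nabla^2p(x)v|\le M_\alpha\norm{v}_1^2$ in the displayed quadratic form for $R_\beta$, with $\beta=M_\alpha$, gives
\[
0\le v^\top\nabla^2R_\beta(x)v\le\Bigl(\tfrac1\alpha+1\Bigr)M_\alpha\norm{v}_1^2 .
\]
Proposition~\ref{prop:entropy-cocoercivity} then yields convexity of $R_\beta$ and Assumption~\ref{ass:correction} with $\delta_\omega=(1+1/\alpha)M_\alpha$. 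The neighborhood regularity that proposition needs is supplied by Step 2.
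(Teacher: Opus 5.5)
Your proposal is correct and follows the paper's proof essentially step for step. The steps match: the entropy chain rule and perspective convexity under partial minimization for relative convexity, compactness plus the cited Sinkhorn smoothness for $M_\alpha<\infty$, the normal-cone argument for Assumption~\ref{ass:outer}, and the bounds $\norm{v}_1^2\le\sum_i v_i^2/x_i\le\alpha^{-1}\norm{v}_1^2$ with $\beta=M_\alpha$ fed into Proposition~\ref{prop:entropy-cocoercivity}. The one cosmetic difference is that the paper writes the perspective in terms of the row sums $\sum_\ell\pi_{j\ell}$ rather than $x_j$, which covers all of $\Delta_d$ including the boundary; on $\Delta_d^\alpha$ this makes no difference.
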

For initialization at the uniform histogram, define
\begin{equation}
  u:=\frac1d\mathbf1,
 \quad
 g_0:=
 \frac1n\sum_{i=1}^n\nabla_xW_\gamma(u,a_i), \quad \overline B_0:=
 \frac1\gamma
 \left(
  \max_{1\le j\le d}(g_0)_j
  -
  \min_{1\le j\le d}(g_0)_j
 \right)^2.
 \label{eq:barycenter-initial-scale}
\end{equation}
\begin{theorem}[Fixed-parameter \TIGER{} for entropic Wasserstein barycenters]
\label{thm:barycenter} 
Run Algorithm~\ref{alg:tiger-corrected} with
$x_0=v_0=u$ and use
$ \mu=\gamma,$
$ \beta=M_\alpha,$
$ \delta_\omega=
 \left(1+\frac1\alpha\right)M_\alpha,$
and residual tolerances set according
to~\eqref{eq:residual-schedule}.  For every $\varepsilon>0$, after
\begin{equation*}
 K=
 O\left(
 \left(
  1+
  \sqrt{
   \frac{(1+\alpha^{-1})M_\alpha}{\gamma}
  }
 \right)
 \log\left(
  1+\frac{\overline B_0}{\varepsilon}
 \right)
 \right)
\end{equation*}
outer iterations, and hence the same order of synchronized communication
rounds, the output $v_K$ satisfies
$ \frac1n\sum\limits_{i=1}^n W_\gamma(v_K,a_i)
 -
 \min\limits_{x\in\Delta_d^\alpha}
 \frac1n\sum\limits_{i=1}^n W_\gamma(x,a_i)
 \le\varepsilon.$
\end{theorem}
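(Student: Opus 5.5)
The proof is a specialization of Theorem~\ref{thm:communication}: I will check its hypotheses for problem~\eqref{eq:truncated-barycenter} and then show that the $\overline B_0$ of~\eqref{eq:barycenter-initial-scale} satisfies~\eqref{eq:initial-scale}.

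\textbf{Step 1: the assumptions.} Lemma~\ref{lem:barycenter-relative-geometry} gives Assumption~\ref{ass:outer} with $\mu=\gamma$. It also gives Assumption~\ref{ass:correction} with $\beta=M_\alpha$ and $\delta_\omega=(1+\alpha^{-1})M_\alpha$. The setting has $\X=\Delta_d^\alpha$, $r\equiv0$ and $q=f_1$. The extension of $q$ is proper, closed and convex, because $W_\gamma(\cdot,a_1)$ is convex on $\Delta_d^\alpha$, being $\gamma\omega$ plus a convex function. The minimum $\Phi^\star$ is attained because $\Delta_d^\alpha$ is compact and $F$ is continuous there. Hence the conditions of Lemma~\ref{lem:rates} hold, with $\tau=1/\max\{1,\sqrt{(1+\alpha^{-1})M_\alpha/\gamma}\}$.

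\textbf{Step 2: the initial scale.} Since $x_0=v_0=u$, the paper notes (Appendix~\ref{app:proof-initial-bound}) that $4\dnorm{g}^2/\mu$ is admissible for any $g\in\nabla p(u)+\partial q(u)$. Take $g=\nabla p(u)+\nabla f_1(u)=\nabla F(u)=g_0$, which is legitimate because $u$ is interior to $\Delta_d^\alpha$ (as $\alpha<1/d$) and $f_1$ is differentiable there. The dual of $\norm{\cdot}_1$ on the tangent space $\{\mathbf1^\top v=0\}$ is the quotient norm
\[
\min_c\norm{g-c\mathbf1}_\infty=\tfrac12\Bigl(\max_j g_j-\min_j g_j\Bigr).
\]
Therefore
\[
4\dnorm{g_0}^2/\gamma=\tfrac1\gamma\Bigl(\max_j(g_0)_j-\min_j(g_0)_j\Bigr)^2,
\]
which is exactly~\eqref{eq:barycenter-initial-scale}.

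\textbf{Main obstacle.} The delicate point is that the dual norm in the general analysis must be the quotient dual on tangent directions. The appendix admissibility argument uses dual Cauchy--Schwarz with $\ip{g_0}{x^\star-u}$, and $\mathbf1^\top(x^\star-u)=0$, so replacing $g_0$ by $g_0-c\mathbf1$ leaves every pairing unchanged. I will verify that the argument of Proposition~\ref{prop:initial-bound} (strong convexity $\Phi(x^\star)\ge\Phi(u)+\ip{g_0}{x^\star-u}+\mu D_\omega(x^\star,u)$ together with Pinsker~\eqref{eq:pinsker}) goes through verbatim with this quotient norm. I will also check that the factor $(1+\tau)\le2$ is absorbed by the constant $4$, and then cite the quotient-dual identity recorded in Appendix~\ref{app:entropy-implementation}.

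\textbf{Step 3: conclusion.} With tolerances set by~\eqref{eq:residual-schedule}, Theorem~\ref{thm:communication} yields $\Phi(v_K)-\Phi^\star\le\varepsilon$ after
\[
K=O\Bigl(\bigl(1+\sqrt{\delta_\omega/\mu}\bigr)\log\bigl(1+\overline B_0/\varepsilon\bigr)\Bigr)
\]
iterations. Substituting $\delta_\omega=(1+\alpha^{-1})M_\alpha$ and $\mu=\gamma$ gives the stated bound. Here $\Phi=F=\frac1n\sum_iW_\gamma(\cdot,a_i)$ and $\Phi^\star$ is its minimum over $\Delta_d^\alpha$. Each iteration uses two synchronizations, so the number of communication rounds is of the same order.
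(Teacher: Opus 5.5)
Your proposal is correct and follows the paper's proof essentially step for step. The hypotheses come from Lemma~\ref{lem:barycenter-relative-geometry}, the initial scale comes from Proposition~\ref{prop:initial-bound} with $g_0=\nabla F(u)\in\nabla p(u)+\partial q(u)$ and the quotient-dual identity~\eqref{eq:quotient-dual}, and the conclusion follows from Theorem~\ref{thm:communication}; the paper merely writes the intermediate bound $\frac{2(1+\tau)}{\gamma}\dnorm{g_0}^2$ before using $1+\tau\le2$, which is exactly your $4\dnorm{g_0}^2/\gamma$.
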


\begin{proof}
Lemma~\ref{lem:barycenter-relative-geometry} verifies the assumptions of
Theorem~\ref{thm:communication} with
$ \mu=\gamma,$
$ \beta=M_\alpha,$
and
$ \delta_\omega=
 \left(1+\frac1\alpha\right)M_\alpha.$
At the uniform initialization, the initial averaged transport gradient is
$g_0$.  The computable initialization bound and the simplex dual-norm
identity give
\begin{align*}
 &\gamma(1+\tau)\KL(x^\star\Vert u)
 +F(u)-F(x^\star)
\le
 \frac{2(1+\tau)}{\gamma}\dnorm{g_0}^{\,2}
 =
 \frac{1+\tau}{2\gamma}
 \left(
  \max_j(g_0)_j-\min_j(g_0)_j
 \right)^2
 \le\overline B_0,
\end{align*}
where $x^\star$ is any minimizer of
problem~\eqref{eq:truncated-barycenter}.  Therefore
Theorem~\ref{thm:communication} applies with the value
of $\overline B_0$ in~\eqref{eq:barycenter-initial-scale} and yields
the statement.
\end{proof}

The theorem gives explicit parameters for \TIGER{}.  On the same
barycenter model, Adaptive \TIGER{} can instead select them using the secant
and model-error absorption conditions of Section~\ref{sec:adaptive-tiger}; the full
procedure is given in Algorithm~\ref{alg:adaptive-tiger} in the appendix.
Corollary~\ref{cor:adaptive-communication} applies whenever
Assumption~\ref{ass:adaptive-backtracking} holds over the stabilization
values queried during the run.  We keep this condition separate from
Lemma~\ref{lem:barycenter-relative-geometry}, which verifies the fixed
parameter assumptions but does not by itself certify all varying
stabilization values.

\section{Numerical experiments}
\label{sec:experiments}

We evaluate Adaptive \TIGER{} on entropic Wasserstein barycenter problems
using Fashion-MNIST images.  We form eight
distributed problems: four \textit{within-class} tasks, in which all ten
images belong to the same clothing category, and four \textit{cross-class}
tasks, in which each of the ten Fashion-MNIST categories is represented
once.  Each image is assigned to a separate agent.

\paragraph{Fashion-MNIST dataset.}
The original $28\times28$ images are reduced to $7\times7$ by block
averaging, normalized as probability histograms, and mixed with uniform
mass $\rho=10^{-3}$.  Thus $d=49$ and the feasible set is the truncated
simplex $\Delta_d^\alpha$ with $\alpha=\rho/d$.  We use the normalized
squared Euclidean cost on the two-dimensional pixel grid, set
$\gamma=0.005$, and initialize every method at the uniform histogram.

\paragraph{Methods.}
We compare Adaptive \TIGER{} with SPAG
\citep{hendrikx2020statistically}, a deterministic PAUS specialization
\citep{beznosikov2024bregman}, entropy mirror descent
\citep{beck2003mirror}, accelerated projected gradient applied to the full
objective \citep{nesterov2004intro}, and Euclidean projected gradient
descent.  These methods can be applied directly to the truncated-simplex
problem.  Methods whose published formulations do not directly accommodate
the constrained composite objective are excluded.

A single parameter configuration of each method is used across all eight
tasks.  Adaptive \TIGER{} selects its stabilization coefficient and
similarity estimate using the secant and model-error absorption conditions in
Section~\ref{sec:adaptive-tiger}; trial values are initialized from recently
accepted estimates.  The adaptive configuration was selected using these
same tasks.  All synchronized algorithmic queries, including those from
rejected adaptive or line-search trials, are counted.  Setup and server-local
computation are not included in the query count.

\paragraph{Evaluation criterion.}
Since the exact optimal value is unavailable, for each problem we compute a
common numerical reference value $F_{\mathrm{ref}}$ using a high-accuracy
centralized solve.  We report the relative objective gap
\[
 \operatorname{relgap}(x)
 :=
 \frac{F(x)-F_{\mathrm{ref}}}
      {F(x_0)-F_{\mathrm{ref}}}.
\]
The same reference value  and initialization (for details see  Appendix~\ref{app:fashion-protocol}) are used for all methods.

\begin{figure}[ht]
 \centering
 \includegraphics[width=\textwidth]
 {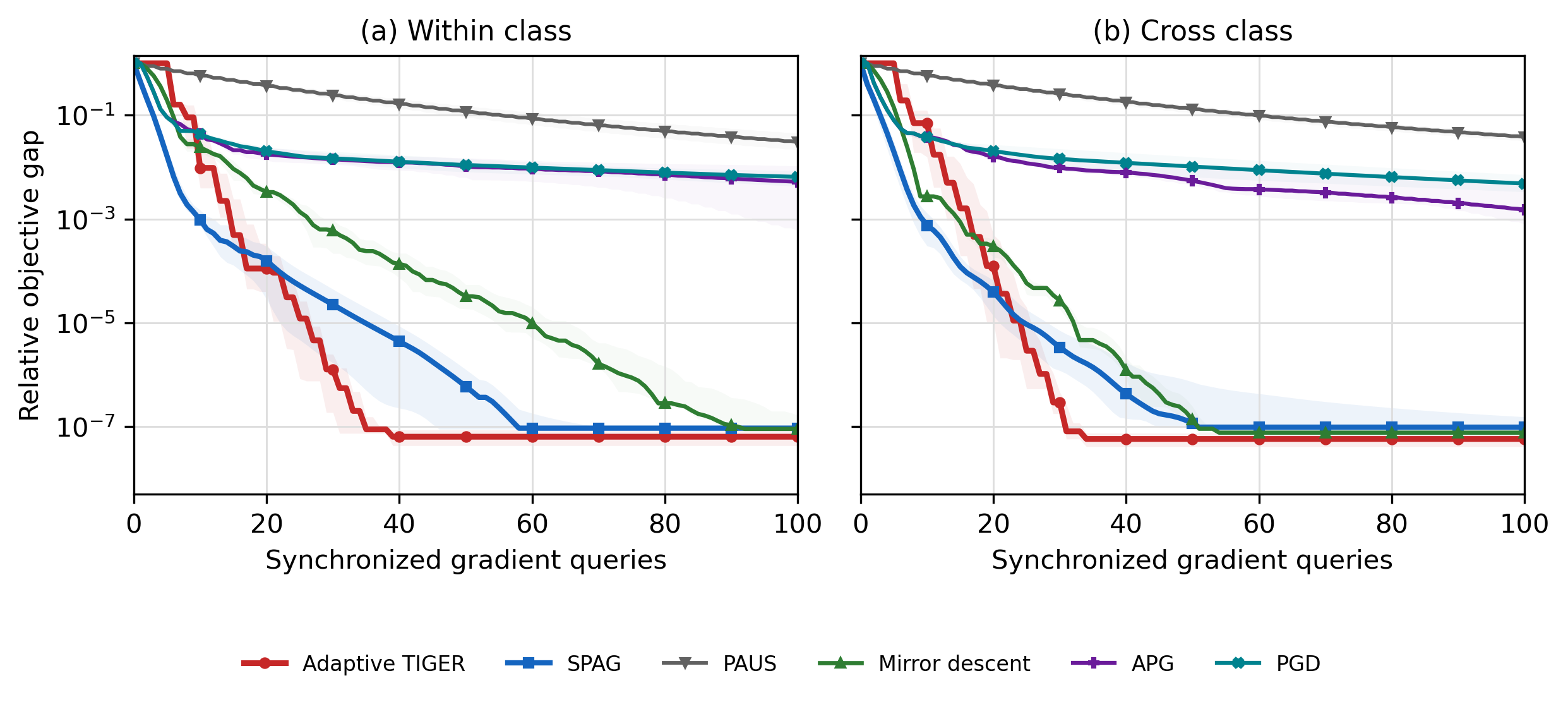}
 \caption{Fashion-MNIST barycenter experiments.  Curves show the median
 best-so-far relative objective gap, and shaded regions show the
 interquartile range over four within-class problems (left) and four
 cross-class problems (right).  The horizontal axis counts synchronized
 queries and is shown on a linear scale through $100$.}
 \label{fig:fashion-all-methods}
\end{figure}

Figure~\ref{fig:fashion-all-methods} shows that Adaptive \TIGER{} reaches
small relative objective gaps with fewer synchronized queries than the other
displayed methods in both task families.  The PAUS results are shown for
reference; its local solver did not attain the prescribed
stationarity tolerance in three of the eight problems.

\section{Conclusion}
\label{sec:conclusion}

The analysis of \TIGER{} identifies the stabilized local-model error as
the quantity that links function similarity to acceleration in a chosen
Bregman geometry.  Controlling this error preserves the separation between
communication and local computation on constrained domains, while a
computable residual allows the local subproblems to be solved inexactly.
Adaptive \TIGER{} uses the same convergence estimate to define acceptance
conditions for its stabilization and acceleration parameters.
The Wasserstein barycenter experiments illustrate the practical use of
this adaptive procedure in entropy geometry.

\section*{Acknowledgments}

The research was supported by Russian Science Foundation (project No. 23-11-00229-{\fontencoding{T2A}\fontfamily{cmr}\selectfont П}),
\url{https://rscf.ru/en/project/23-11-00229/}.

\begin{samepage}
\paragraph{Declaration on the use of generative AI.}
Generative AI tools were used for language editing and limited assistance
with code organization.  They were not used to formulate the research
questions or mathematical results.  The authors reviewed all AI-assisted
edits and take full responsibility for the content of the manuscript.
\par
\end{samepage}

\newpage

\par\bigskip
\phantomsection
\section*{Appendix}
\addcontentsline{toc}{section}{Appendix}
\appendix

\section{Proofs of the convergence results}
\label{app:convergence-proofs}

In the proofs only, we abbreviate the nonnegative monotonicity quantity
\begin{equation*}
 S_\beta(y,x)
 :=\ip{\nabla R_\beta(y)-\nabla R_\beta(x)}{y-x}\ge0,
\end{equation*}
where nonnegativity follows from the convexity of $R_\beta$ in
Assumption~\ref{ass:correction}.
We also use the Lyapunov quantity $\mathcal L_k$ defined in
\eqref{eq:lyapunov-quantity}.  It is a shorthand for the objective gap and
the Bregman term and is not used by the algorithm or its stopping rule.

\subsection{One-step estimate and Lyapunov recursion}
\label{app:proof-one-step}

\begin{lemma}[One-step estimate and Lyapunov recursion]
\label{lem:one-step}
Under Assumptions~\ref{ass:outer} and~\ref{ass:correction}, every iteration
satisfies
\begin{align*}
 \mathcal L_{k+1}
&\le
 \mu D_\omega(x^\star,x_k)
 +(1-\tau)\bigl(\Phi(v_k)-\Phi^\star\bigr)
 +\frac{\dnorm{e_k}^2}{2\mu}
\le
 \frac{\mathcal L_k}{1+\tau}
 +\frac{\xi_k}{2\mu}.
\end{align*}
\end{lemma}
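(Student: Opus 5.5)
The first inequality will come from combining three ingredients: the optimality condition of the outer mirror step, relative strong convexity (Assumption~\ref{ass:outer}) applied at $v_{k+1}$ with the exact subgradient $g_k\in\nabla p(v_{k+1})+\partial q(v_{k+1})$, and Lemma~\ref{lem:model-error-four-point} to absorb the model-error cross term. The second inequality is then elementary: since $1-\tau\le(1+\tau)^{-1}$ and $\mu D_\omega(x^\star,x_k)=\mu(1+\tau)D_\omega(x^\star,x_k)/(1+\tau)$, the middle expression is at most $\mathcal L_k/(1+\tau)+\dnorm{e_k}^2/(2\mu)$. The residual test $\dnorm{e_k}^2\le\xi_k$ finishes it.

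\emph{Step 1 (mirror step).} Write the variational inequality for $x_{k+1}$ against $u=x^\star$. Applying the Bregman three-point identity to both proximal terms and multiplying by $\mu$ gives
\begin{align*}
 \tau\ip{g_k}{x_{k+1}-x^\star}
 &\le \mu D_\omega(x^\star,x_k)-\mu(1+\tau)D_\omega(x^\star,x_{k+1})
 +\mu\tau D_\omega(x^\star,v_{k+1})\\
 &\quad-\mu D_\omega(x_{k+1},x_k)-\mu\tau D_\omega(x_{k+1},v_{k+1}).
\end{align*}

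\emph{Step 2 (function values).} Apply Assumption~\ref{ass:outer} at $x=v_{k+1}$ with $s=g_k-\nabla p(v_{k+1})$, twice. With $y=x^\star$ and weight $\tau$ it produces the term $\tau\mu D_\omega(x^\star,v_{k+1})$, which cancels against Step~1. With $y=v_k$ and weight $1-\tau$ it produces $(1-\tau)(\Phi(v_k)-\Phi^\star)$. Summing the two and using the decomposition
\[
 v_{k+1}-\tau x^\star-(1-\tau)v_k=(v_{k+1}-y_k)+\tau(x_{k+1}-x^\star)-\tau(x_{k+1}-x_k)
\]
leaves the target inequality plus a remainder
\[
 \ip{g_k}{v_{k+1}-y_k-\tau(x_{k+1}-x_k)}-\mu V_k-\mu\tau D_\omega(x_{k+1},v_{k+1}),
 \qquad V_k:=D_\omega(x_{k+1},x_k).
\]
This remainder must be shown to be at most $\dnorm{e_k}^2/(2\mu)$.

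\emph{Step 3 (model error and residual).} Split $g_k=e_k-h_k$, where $h_k=\nabla R_\beta(v_{k+1})-\nabla R_\beta(y_k)$. The $h_k$ part contributes $-S_\beta(v_{k+1},y_k)+\tau\ip{h_k}{x_{k+1}-x_k}$. By Lemma~\ref{lem:model-error-four-point} and Young's inequality,
\[
 \tau\,|\ip{h_k}{x_{k+1}-x_k}|\le \tau\sqrt{2\delta_\omega S_\beta V_k}\le S_\beta+\tfrac{\tau^2\delta_\omega}{2}V_k\le S_\beta+\tfrac{\mu}{2}V_k,
\]
where the last step uses $\tau^2\delta_\omega\le\mu$ from \eqref{eq:tau-choice}. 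So the model-error part consumes only half of $\mu V_k$.

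The main obstacle is the residual term $\ip{e_k}{v_{k+1}-\tau x_{k+1}-(1-\tau)v_k}$. The component $\tau\ip{e_k}{v_{k+1}-x_{k+1}}$ is handled by Fenchel--Young against $\mu\tau D_\omega(x_{k+1},v_{k+1})\ge\mu\tau\norm{x_{k+1}-v_{k+1}}^2/2$. The component $(1-\tau)\ip{e_k}{v_{k+1}-v_k}$ is not controlled by any leftover divergence. To handle it, I would first try to avoid it altogether: in Step~2, replace the lower bound at $v_k$ by the majorant property $\Phi\le\mathcal M_{y_k}$ from \eqref{eq:model-identity-corrected}, together with the $\mu$-strong convexity of $\mathcal M_{y_k}$ with subgradient $e_k$ at $v_{k+1}$, evaluated at the comparison point $z=\tau x_{k+1}+(1-\tau)v_k$. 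Then the residual appears only as $\ip{e_k}{v_{k+1}-z}-\mu D_\omega(z,v_{k+1})\le\dnorm{e_k}^2/(2\mu)$. The $R_\beta$-Bregman term at $z$ is then controlled by the same four-point and Young argument as above. This is the step I expect to require the most careful bookkeeping.
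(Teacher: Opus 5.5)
Your Step~1, the model-error part of Step~3, and the passage to the second inequality are correct. Your Young split $S_\beta+\frac{\tau^2\delta_\omega}{2}V_k$ differs harmlessly from the paper's $\frac{S}{2}+\tau^2\delta_\omega V$.

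The gap is in Step~2, and it is what creates your ``main obstacle.'' Assumption~\ref{ass:outer} at $x=v_{k+1}$ with comparison point $y=v_k$ reads $\Phi(v_k)\ge\Phi(v_{k+1})+\ip{g_k}{v_k-v_{k+1}}+\mu D_\omega(v_k,v_{k+1})$. After weighting by $1-\tau$, the remainder you must bound therefore also contains $-\mu(1-\tau)D_\omega(v_k,v_{k+1})$, which you dropped. That divergence is exactly what controls $(1-\tau)\ip{e_k}{v_{k+1}-v_k}$.

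With it restored, set $w:=v_{k+1}-\tau x_{k+1}-(1-\tau)v_k=\tau(v_{k+1}-x_{k+1})+(1-\tau)(v_{k+1}-v_k)$. Then $1$-strong convexity of $\omega$ and convexity of $\norm{\cdot}^2$ give $\tau D_\omega(x_{k+1},v_{k+1})+(1-\tau)D_\omega(v_k,v_{k+1})\ge\frac12\norm{w}^2$. Fenchel--Young then gives $\ip{e_k}{w}-\frac\mu2\norm{w}^2\le\dnorm{e_k}^2/(2\mu)$; equivalently, apply Fenchel--Young to the two components separately and add. This is precisely the paper's argument, and with this correction your proof coincides with it.

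As written, though, the proposal does not establish the lemma. It declares the residual term uncontrolled and hands it to a workaround that is only sketched. That workaround is unnecessary, and it is not clear it can be completed under the stated hypotheses.

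To use $\Phi(v_{k+1})\le\mathcal M_{y_k}(v_{k+1})$ together with strong convexity of $\mathcal M_{y_k}$ at $u:=\tau x_{k+1}+(1-\tau)v_k$, you would need an upper bound on $\mathcal M_{y_k}(u)=\Phi(u)+R_\beta(u)-R_\beta(y_k)-\ip{\nabla R_\beta(y_k)}{u-y_k}$ in terms of $\Phi(v_k)$ and the mirror quantities. Each natural way of getting one introduces an uncontrolled term:
\begin{itemize}
\item Splitting $\Phi(u)$ by convexity along the segment brings in $\Phi(x_{k+1})$, which nothing controls since no smoothness of $q$ is assumed.
\item Splitting $\mathcal M_{y_k}(u)$ the same way adds the $R_\beta$-divergence between $v_k$ and $y_k$.
\item Comparing the linearization of $p$ at $y_k$ with $\Phi(v_k)$ would require $p$ to be convex, which is explicitly not assumed.
\end{itemize}
None of these terms is controlled by Assumptions~\ref{ass:outer} and~\ref{ass:correction}, and none appears in the target inequality.
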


\begin{proof}[Proof of Lemma~\ref{lem:one-step}]
Suppress the iteration index and write
\[
 x=x_k,\quad v=v_k,\quad y=y_k,\quad z=v_{k+1},\quad
 x^+=x_{k+1},\quad g=g_k,\quad e=e_k,
\]
and define the auxiliary correction vector
\[
 c:=\nabla p(z)-\nabla p(y)
 -\beta\bigl(\nabla\omega(z)-\nabla\omega(y)\bigr).
\]
By the residual inclusion, choose $s\in\partial q(z)$ such that
$e=\nabla p(y)+s+\beta(\nabla\omega(z)-\nabla\omega(y))$.
Then $g=c+e=\nabla p(z)+s$.

The first-order variational inequality for the outer subproblem in
Algorithm~\ref{alg:tiger-corrected} is
\[
 \ip{
  \frac{\tau}{\mu}g+\tau(\nabla\omega(x^+)-\nabla\omega(z))
  +\nabla\omega(x^+)-\nabla\omega(x)
 }{a-x^+}\ge0
\]
for every admissible $a\in\X$.  The Bregman three-point identity
\begin{equation*}
 \ip{\nabla\omega(a)-\nabla\omega(b)}{d-a}
 =D_\omega(d,b)-D_\omega(d,a)-D_\omega(a,b)
\end{equation*}
and the choice $a=x^\star$ in the variational inequality give
\begin{align}
 \frac{\tau}{\mu}\ip{g}{x^+-x^\star}
 +(1+\tau)D_\omega(x^\star,x^+)
 &\le D_\omega(x^\star,x)+\tau D_\omega(x^\star,z)
 -D_\omega(x^+,x)-\tau D_\omega(x^+,z).
 \label{eq:mirror-three-point}
\end{align}

Apply Assumption~\ref{ass:outer} at $z$ with the
composite subgradient $g=\nabla p(z)+s$.  With comparison points $x^\star$
and $v$, respectively,
\begin{equation*}
 \Phi(z)-\Phi^\star\le\ip{g}{z-x^\star}-\mu D_\omega(x^\star,z),
 \qquad
 \Phi(z)-\Phi(v)\le\ip{g}{z-v}-\mu D_\omega(v,z).
\end{equation*}
Multiply the first inequality by $\tau/\mu$, the second by
$(1-\tau)/\mu$, and add both to
\eqref{eq:mirror-three-point}.  Since
\[
 \tau(\Phi(z)-\Phi^\star)+(1-\tau)(\Phi(z)-\Phi(v))
 =\Phi(z)-\Phi^\star-(1-\tau)(\Phi(v)-\Phi^\star),
\]
we obtain
\begin{align}
 (1+\tau)D_\omega(x^\star,x^+)
 +\frac{\Phi(z)-\Phi^\star}{\mu}
&\le D_\omega(x^\star,x)
 +\frac{1-\tau}{\mu}\bigl(\Phi(v)-\Phi^\star\bigr)
  \notag\\*
 &-D_\omega(x^+,x)
-\tau D_\omega(x^+,z)-(1-\tau)D_\omega(v,z)
 +\frac1\mu\ip{g}{w},
 \label{eq:preliminary-one-step}
\end{align}
where the extrapolation identity gives
\begin{equation*}
 w:=\tau(z-x^+)+(1-\tau)(z-v)
 =z-y+\tau(x-x^+).
\end{equation*}

Moreover,
\[
 \nabla R_\beta(z)-\nabla R_\beta(y)=-c,
 \qquad
 \ip{c}{z-y}=-S_\beta(z,y).
\]
Consequently,
\begin{equation}
 \ip{g}{w}
 =-S_\beta(z,y)+\tau\ip{c}{x-x^+}+\ip{e}{w}.
 \label{eq:g-w-decomposition}
\end{equation}

Set $S=S_\beta(z,y)$ and $V=D_\omega(x^+,x)$.  Applying
Lemma~\ref{lem:model-error-four-point} with $(x,y,a,b)=(y,z,x^+,x)$ yields
\[
 |\ip{c}{x-x^+}|^2\le2\delta_{\omega} SV.
\]
Young's inequality and $\tau^2\delta_{\omega}\le\mu$ give
\begin{align*}
 \frac{\tau}{\mu}\ip{c}{x-x^+}
 &\le\frac{S}{2\mu}+\frac{\tau^2\delta_{\omega}}{\mu}V,\\
 -\frac S\mu-V+\frac{\tau}{\mu}\ip{c}{x-x^+}
 &\le-\frac{S}{2\mu}
 -\left(1-\frac{\tau^2\delta_{\omega}}{\mu}\right)V\le0.
\end{align*}

It remains to absorb the local residual.  By~\eqref{eq:omega-strong} and convexity
of the squared norm,
\begin{align*}
 \tau D_\omega(x^+,z)+(1-\tau)D_\omega(v,z)
 &\ge\frac12\left[
   \tau\norm{x^+-z}^2+(1-\tau)\norm{v-z}^2\right]\\
 &\ge\frac12\norm{
   \tau(x^+-z)+(1-\tau)(v-z)}^2
 =\frac12\norm{w}^2.
\end{align*}
Hence the Fenchel--Young inequality gives
\begin{align}
 \frac1\mu\ip{e}{w}
 -\tau D_\omega(x^+,z)-(1-\tau)D_\omega(v,z)
 \le\frac{\dnorm{e}}{\mu}\norm{w}-\frac12\norm{w}^2 \le\frac{\dnorm{e}^2}{2\mu^2}.
 \label{eq:residual-absorption}
\end{align}

Substitute~\eqref{eq:g-w-decomposition} and~\eqref{eq:residual-absorption}
into~\eqref{eq:preliminary-one-step} and discard the nonpositive remainder:
\begin{equation*}
 (1+\tau)D_\omega(x^\star,x_{k+1})
 +\frac{\Phi(v_{k+1})-\Phi^\star}{\mu}\le D_\omega(x^\star,x_k)
 +\frac{1-\tau}{\mu}(\Phi(v_k)-\Phi^\star)
 +\frac{\dnorm{e_k}^2}{2\mu^2}.
\end{equation*}
Multiplying by $\mu$ gives the first inequality in Lemma~\ref{lem:one-step}.
Finally, $\Phi(v_k)-\Phi^\star\ge0$ and
$1-\tau\le(1+\tau)^{-1}$ give the second inequality.  This
proves the lemma.
\end{proof}

\subsection{Unrolling under the prescribed residual schedule}
\label{app:proof-rates}

\begin{proof}[Proof of Lemma~\ref{lem:rates} and~\eqref{eq:residual-rate}]
Repeated substitution in the second inequality of Lemma~\ref{lem:one-step} gives
\begin{equation*}
 \mathcal L_k
 \le (1+\tau)^{-k}\mathcal L_0
 +\frac1{2\mu}\sum_{j=0}^{k-1}
 (1+\tau)^{-(k-1-j)}\xi_j.
\end{equation*}
Expanding $\mathcal L_k$ and $\mathcal L_0$ proves
Lemma \ref{lem:rates}.  For the prescribed schedule, note that, for
$0<\tau\le1$,
\[
 \varrho:=1-\frac\tau4,
 \qquad
 \theta=\frac1{1+\tau}\le1-\frac\tau2,
 \qquad
 \theta+\frac{\tau\varrho}{8}
 \le1-\frac{3\tau}{8}\le1-\frac\tau4=\varrho.
\]
Suppose that, at iteration $k$,
$ \mathcal L_k\le \overline B_0\varrho^k.$
Then Lemma~\ref{lem:one-step} and~\eqref{eq:residual-schedule} give
\begin{align*}
 \mathcal L_{k+1}
 &\le\theta \overline B_0\varrho^k
   +\frac{\tau\overline B_0}{8}\varrho^{k+1}=\overline B_0\varrho^k
   \left(\theta+\frac{\tau\varrho}{8}\right)
 \le \overline B_0\varrho^{k+1}.
\end{align*}
The initial case follows from~\eqref{eq:initial-scale}, so induction
proves the bound on $\mathcal L_k$.  Expanding $\mathcal L_K$ gives
\eqref{eq:residual-rate}.  The scalar inequalities above hold for every
$0<\tau\le1$.
\end{proof}

\subsection{Explicit communication bound}
\label{app:proof-communication}

\begin{proof}[Proof of Theorem~\ref{thm:communication}]
Equation~\eqref{eq:residual-rate} gives
\[
\Phi(v_K)-\Phi^\star
\le
\overline B_0\left(1-\frac{\tau}{4}\right)^K.
\]
Since $0<\tau\le 1$ and
\[
-\log\left(1-\frac{\tau}{4}\right)\ge\frac{\tau}{4},
\]
we obtain
\[
\left(1-\frac{\tau}{4}\right)^K
\le
\exp\left(-\frac{\tau K}{4}\right).
\]
Therefore,
\[
\Phi(v_K)-\Phi^\star
\le
\overline B_0
\exp\left(-\frac{\tau K}{4}\right).
\]

Choose an integer
\[
 K\ge
 \left\lceil
 \frac4\tau
 \log\left(1+\frac{\overline B_0}{\varepsilon}\right)
 \right\rceil.
\]
Then $\tau K/4$ is at least
$\log(1+\overline B_0/\varepsilon)$, and hence
\[
\exp\left(-\frac{\tau K}{4}\right)
\le
\left(1+\frac{\overline B_0}{\varepsilon}\right)^{-1}.
\]
It follows that
\[
\Phi(v_K)-\Phi^\star
\le
\frac{\overline B_0}
{1+\overline B_0/\varepsilon}
=
\frac{\overline B_0\varepsilon}
{\overline B_0+\varepsilon}
\le
\varepsilon.
\]

Finally, by the definition of the acceleration parameter,
\[
\tau^{-1}
=
\max\left\{
1,\sqrt{\frac{\delta_{\omega}}{\mu}}
\right\}
=
O\left(
1+\sqrt{\frac{\delta_{\omega}}{\mu}}
\right).
\]
Substitution into the explicit bound gives the stated iteration complexity.
Each iteration uses two synchronized correction queries, so the total is
exactly $2K$ synchronized queries and has the same order.
\end{proof}

\subsection{A computable initialization bound}
\label{app:proof-initial-bound}

\begin{proposition}[Computable initialization bound]
\label{prop:initial-bound}
If $x_0=v_0\in\X$ and
$g_0\in\nabla p(x_0)+\partial q(x_0)$, then
\begin{align*}
 &\mu(1+\tau)D_\omega(x^\star,x_0)
 +\Phi(x_0)-\Phi^\star
\le\frac{2(1+\tau)}{\mu}\dnorm{g_0}^2
 \le\frac4\mu\dnorm{g_0}^2.
\end{align*}
Consequently, $\overline B_0=4\dnorm{g_0}^2/\mu$ is admissible
in~\eqref{eq:initial-scale}.
\end{proposition}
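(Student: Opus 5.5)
The bound follows from Assumption~\ref{ass:outer} applied at the single point $x_0$, with no iteration of the algorithm involved. Applying the assumption at $x=x_0$ with comparison point $y=x^\star$ and the given composite subgradient $g_0=\nabla p(x_0)+s_0$, $s_0\in\partial q(x_0)$, gives
\[
 \Phi^\star\ge\Phi(x_0)+\ip{g_0}{x^\star-x_0}+\mu D_\omega(x^\star,x_0).
\]
Rearranged, this says
\[
 \Phi(x_0)-\Phi^\star+\mu D_\omega(x^\star,x_0)\le\ip{g_0}{x_0-x^\star}\le\dnorm{g_0}\,\norm{x_0-x^\star},
\]
by the definition of the dual norm. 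Both terms on the left are nonnegative. Hence each of them is separately bounded by $\dnorm{g_0}\norm{x_0-x^\star}$.

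The next step removes the unknown distance $\norm{x_0-x^\star}$. The strong convexity of $\omega$ in \eqref{eq:omega-strong} gives $D_\omega(x^\star,x_0)\ge\frac12\norm{x^\star-x_0}^2$. Dropping the nonnegative gap term, we get
\[
 \tfrac{\mu}{2}\norm{x_0-x^\star}^2\le\dnorm{g_0}\norm{x_0-x^\star},
\]
so $\norm{x_0-x^\star}\le 2\dnorm{g_0}/\mu$. Substituting back yields
\[
 \mu D_\omega(x^\star,x_0)\le\frac{2}{\mu}\dnorm{g_0}^2
 \qquad\text{and}\qquad
 \Phi(x_0)-\Phi^\star\le\frac{2}{\mu}\dnorm{g_0}^2 .
\]

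The main obstacle is the factor $(1+\tau)$ on the Bregman term. Adding the two separate bounds naively gives $(1+\tau)\cdot 2\dnorm{g_0}^2/\mu+2\dnorm{g_0}^2/\mu$, which is too large by one summand. I would avoid adding them. Instead I would use the combined inequality
\[
 \Phi(x_0)-\Phi^\star+\mu D_\omega(x^\star,x_0)\le\frac{2}{\mu}\dnorm{g_0}^2,
\]
which holds because the middle expression above is at most $\dnorm{g_0}\norm{x_0-x^\star}\le 2\dnorm{g_0}^2/\mu$. Then I would add the remaining term $\tau\mu D_\omega(x^\star,x_0)\le 2\tau\dnorm{g_0}^2/\mu$. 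The total is exactly $2(1+\tau)\dnorm{g_0}^2/\mu$.

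Finally, $\tau\le1$ by \eqref{eq:tau-choice}, which gives the bound $4\dnorm{g_0}^2/\mu$. Since $v_0=x_0$, this is precisely the quantity required in \eqref{eq:initial-scale}, so $\overline B_0=4\dnorm{g_0}^2/\mu$ is admissible.
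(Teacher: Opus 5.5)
Your proof is correct and follows essentially the same route as the paper: relative strong convexity at $x_0$ against $x^\star$, the dual-norm bound, and $D_\omega\ge\frac12\norm{\cdot}^2$ give $\Phi(x_0)-\Phi^\star+\mu D_\omega(x^\star,x_0)\le 2\dnorm{g_0}^2/\mu$, and the extra $\tau\mu D_\omega(x^\star,x_0)$ term is then bounded separately. The only cosmetic difference is that you eliminate $\norm{x_0-x^\star}$ directly, whereas the paper works with $\sqrt{2D_\omega(x^\star,x_0)}$; both give the same constant.
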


\begin{proof}[Proof of Proposition~\ref{prop:initial-bound}]
Put
$V_0=D_\omega(x^\star,x_0)$,
$G_0=\Phi(x_0)-\Phi^\star$, and
$h=\dnorm{g_0}/\mu$.  Relative strong convexity at $x_0$ gives
\[
 G_0+\mu V_0
 \le\ip{g_0}{x_0-x^\star}
 \le\dnorm{g_0}\sqrt{2V_0}.
\]
After division by $\mu$,
$G_0/\mu+V_0\le h\sqrt{2V_0}$.  Since $G_0\ge0$, this implies
$V_0\le2h^2$ and $G_0/\mu+V_0\le2h^2$.  Consequently,
\begin{align*}
 \mu(1+\tau)V_0+G_0
 &=\mu\left[\tau V_0+V_0+\frac{G_0}{\mu}\right]\le2\mu(1+\tau)h^2
 =\frac{2(1+\tau)}{\mu}\dnorm{g_0}^2.
\end{align*}
The final bound follows from $\tau\le1$.
\end{proof}

\section{Adaptive \TIGER{}: algorithm and proofs}
\label{app:adaptive-proofs}

The following pseudocode makes the adaptive procedure explicit.  Its
notation and order of operations parallel Algorithm~\ref{alg:tiger-corrected};
the only additions are the two backtracking loops.

\begin{algorithm}[H]
\caption{Adaptive \TIGER{}}
\label{alg:adaptive-tiger}
\KwIn{starting points $x_0=v_0\in\X$; $\mu>0$; initial estimates
$\beta_0,\widehat\delta_{\omega,0}>0$; number of iterations $K$;
residual bounds $\xi_k\ge0$.}

$\beta\leftarrow\beta_0$,
$\widehat\delta_\omega\leftarrow\widehat\delta_{\omega,0}$\;

\For{$k=0,\ldots,K-1$}{
 \Repeat{$\tau_k C_k\le S_k+\mu V_k$}{
  $\tau_k\leftarrow
   \left(\max\{1,\sqrt{\widehat\delta_\omega/\mu}\}\right)^{-1}$,
  $\quad y_k\leftarrow\tau_kx_k+(1-\tau_k)v_k$\;

  Synchronize at $y_k$ and set
  $\ell_k\leftarrow\nabla p(y_k)$\;

  \Repeat{$\dnorm{e_k}^{2}\le\xi_k$ and $S_k\ge0$}{
   Locally compute $v_{k+1}\in\X$ and
   $e_k\in\ell_k+\partial q(v_{k+1})
   +\beta\bigl(\nabla\omega(v_{k+1})-\nabla\omega(y_k)\bigr)$\;

   Synchronize at $v_{k+1}$ and set
   $h_k\leftarrow
   \beta\bigl(\nabla\omega(v_{k+1})-\nabla\omega(y_k)\bigr)
   -\bigl(\nabla p(v_{k+1})-\ell_k\bigr)$,
   $\quad S_k\leftarrow\ip{h_k}{v_{k+1}-y_k}$\;

   \If{$\dnorm{e_k}^{2}>\xi_k$ or $S_k<0$}{
    $\beta\leftarrow2\beta$\;
   }
  }

  $g_k\leftarrow e_k-h_k$\;

  $x_{k+1}\leftarrow
   \argmin\limits_{u\in\X}
   \left\{
    \frac{\tau_k}{\mu}\ip{g_k}{u}
    +\tau_kD_\omega(u,v_{k+1})
    +D_\omega(u,x_k)
   \right\}$\;

  $C_k\leftarrow\ip{h_k}{x_{k+1}-x_k}$,
  $\quad V_k\leftarrow D_\omega(x_{k+1},x_k)$\;

  \If{$\tau_k C_k>S_k+\mu V_k$}{
   $\widehat\delta_\omega\leftarrow2\widehat\delta_\omega$\;
  }
 }
}
\KwOut{$v_K$.}
\end{algorithm}

A failed stabilization trial requires a new synchronized query at the new
candidate $v_{k+1}$, while the query at $y_k$ is reused.  A failed
similarity trial changes $\tau_k$ and $y_k$ and therefore repeats both
synchronized queries.

\subsection{Accepted-step estimate}

\begin{proof}[Proof of Lemma~\ref{lem:adaptive-one-step}]
Suppress the iteration index and write
$x=x_k$, $v=v_k$, $y=y_k$, $z=v_{k+1}$,
$x^+=x_{k+1}$, $g=g_k$, $h=h_k$, and $e=e_k$.
The variational inequality for~\eqref{eq:adaptive-outer-trial}, with
comparison point $x^\star$, and the Bregman three-point identity give
\begin{align}
 \frac{\tau}{\mu}\ip{g}{x^+-x^\star}
 +(1+\tau)D_\omega(x^\star,x^+)
 \le{}&D_\omega(x^\star,x)+\tau D_\omega(x^\star,z)
 \notag\\
 &-D_\omega(x^+,x)-\tau D_\omega(x^+,z).
 \label{eq:adaptive-mirror-vi}
\end{align}
Since $g\in\nabla p(z)+\partial q(z)$, relative strong convexity at $z$,
with comparison points $x^\star$ and $v$, yields
\begin{align*}
 \Phi(z)-\Phi^\star
 &\le\ip{g}{z-x^\star}-\mu D_\omega(x^\star,z),\\
 \Phi(z)-\Phi(v)
 &\le\ip{g}{z-v}-\mu D_\omega(v,z).
\end{align*}
Multiply the first inequality by $\tau/\mu$, the second by
$(1-\tau)/\mu$, and add them to~\eqref{eq:adaptive-mirror-vi}.  Since
$y=\tau x+(1-\tau)v$, the remaining displacement is
\[
 w:=\tau(z-x^+)+(1-\tau)(z-v)
   =z-y+\tau(x-x^+).
\]
Consequently,
\begin{align}
 &(1+\tau)D_\omega(x^\star,x^+)
 +\frac{\Phi(z)-\Phi^\star}{\mu}
 \notag\\
 &\le D_\omega(x^\star,x)
 +\frac{1-\tau}{\mu}\bigl(\Phi(v)-\Phi^\star\bigr)
 -V-\tau D_\omega(x^+,z)-(1-\tau)D_\omega(v,z)
 +\frac1\mu\ip{g}{w},
 \label{eq:adaptive-preliminary}
\end{align}
where $V=D_\omega(x^+,x)$.  From
\eqref{eq:adaptive-secant-test}, $g=e-h$, and the definition of $w$,
\[
 \ip{g}{w}=-S+\tau C+\ip{e}{w}.
\]
The model-error absorption condition~\eqref{eq:adaptive-signed-test} gives
\[
 -V+\frac{-S+\tau C}{\mu}\le0.
\]
Moreover, the strong convexity of $\omega$ and the convexity of the squared
norm imply
\[
 \tau D_\omega(x^+,z)+(1-\tau)D_\omega(v,z)
 \ge\frac12\norm{w}^2.
\]
Fenchel--Young and $\dnorm{e}^2\le\xi_k$ therefore give
\[
 \frac1\mu\ip{e}{w}
 -\tau D_\omega(x^+,z)-(1-\tau)D_\omega(v,z)
 \le\frac{\xi_k}{2\mu^2}.
\]
Substitution into~\eqref{eq:adaptive-preliminary} and multiplication by
$\mu$ prove the claim.
\end{proof}

\subsection{Variable weights and residuals}

\begin{proof}[Proof of Theorem~\ref{thm:adaptive-rate}]
Define, for this proof,
\[
 \mathcal A_k:=\Phi(v_k)-\Phi^\star
 +\mu(1+\underline\tau)D_\omega(x^\star,x_k),
 \qquad
 a_k:=\frac{\xi_k}{2\mu}.
\]
Since $\tau_k\ge\underline\tau$ and the objective gap is nonnegative,
Lemma~\ref{lem:adaptive-one-step} and
$1-\underline\tau\le(1+\underline\tau)^{-1}$ give
\begin{equation}
 \mathcal A_{k+1}
 \le\frac{\mathcal A_k}{1+\underline\tau}+a_k.
 \label{eq:adaptive-fixed-recursion}
\end{equation}
Let $\vartheta=(1+\underline\tau)^{-1}$ and
$\rho=1-\underline\tau/4$.  For $0<\underline\tau\le1$,
\[
 \vartheta\le1-\frac{\underline\tau}{2},
 \qquad
 \vartheta+\frac{\underline\tau\rho}{8}
 \le1-\frac{3\underline\tau}{8}\le\rho.
\]
Starting from $\mathcal A_0\le\overline B_0$, induction in
\eqref{eq:adaptive-fixed-recursion} with the prescribed residual schedule
proves $\mathcal A_K\le\overline B_0\rho^K$.
Proposition~\ref{prop:initial-bound}, with $\tau$ replaced by
$\underline\tau$, gives the stated computable initial bound.
\end{proof}

If the residual tolerance is fixed at $\xi_k\le\overline\xi$, unrolling
\eqref{eq:adaptive-fixed-recursion} gives the numerical floor
\[
 \limsup_{K\to\infty}\bigl(\Phi(v_K)-\Phi^\star\bigr)
 \le
 \frac{1+\underline\tau}{\underline\tau}
 \frac{\overline\xi}{2\mu}.
\]

\subsection{Backtracking and communication complexity}

Assumption~\ref{ass:adaptive-backtracking} makes both backtracking loops finite.
The residual and secant conditions hold once $\beta$ reaches
$\overline\beta$.  For the
similarity condition, apply the four-point bound in
Assumption~\ref{ass:adaptive-backtracking} with $(a,b)=(x_{k+1},x_k)$:
\[
 C_k^2\le2\overline\delta_\omega S_kV_k.
\]
If $C_k\le0$, condition~\eqref{eq:adaptive-signed-test} holds immediately.
If $C_k>0$ and
$\widehat\delta_\omega\ge\overline\delta_\omega$, Young's inequality and
$\tau_k^2\overline\delta_\omega\le\mu$ give
\[
 \tau_kC_k
 \le\frac12S_k+\tau_k^2\overline\delta_\omega V_k
 \le S_k+\mu V_k.
\]

Let $K$ be the number of accepted iterations, $D$ the total number of
rejected similarity values, and $B$ the total number of candidate queries
rejected by the residual or secant condition.  There are $K+D$
queries at extrapolation points and $K+D+B$ queries at candidate points.
Thus
\begin{equation}
 Q_{\rm on}=2K+2D+B.
 \label{eq:adaptive-exact-ledger}
\end{equation}
Because the trial estimates are carried monotonically and doubled after a
rejection, $B\le L_\beta$ and $D\le L_\delta$.  Hence
\[
 Q_{\rm on}\le2K+L_\beta+2L_\delta.
\]
Every accepted similarity estimate is at most
$\Delta_{\rm ad}$, so
\[
 \tau_k\ge
 \underline\tau:=
 \frac{1}{
  \max\{1,\sqrt{\Delta_{\rm ad}/\mu}\}
 }.
\]
Finally, Theorem~\ref{thm:adaptive-rate} and
$\rho^K\le\exp(-K\underline\tau/4)$ show that
\[
 K\ge
 \left\lceil
  \frac4{\underline\tau}
  \log\left(1+\frac{\overline B_0}{\varepsilon}\right)
 \right\rceil
\]
is sufficient.  Combining this bound with
\eqref{eq:adaptive-exact-ledger} proves
Corollary~\ref{cor:adaptive-communication}.

\paragraph{Finite-precision acceptance.}
In computation, one may replace~\eqref{eq:adaptive-signed-test} by
$\tau_kC_k\le S_k+\mu V_k+\zeta_k$ with $\zeta_k\ge0$.
The accepted-step estimate then acquires the additive term $\zeta_k$.
Decreasing $\zeta_k$ geometrically preserves linear convergence, whereas a
fixed positive tolerance produces a corresponding numerical error floor.

\section{Finite local implementation}
\label{app:local-proofs}
\label{sec:local}

This section describes a local solver that meets the residual condition in
Algorithm~\ref{alg:tiger-corrected} in finitely many iterations.  Unlike the
communication theorem, this result requires a fixed quadratic metric
and a uniform warm-start bound.

Let $A=A^\top\succ0$ be fixed.  For $\lambda>0$, define the metric proximal
map
\begin{equation}
 \prox_{\lambda\mathcal R}(z)
 :=\argmin_x\left\{
  \lambda\mathcal R(x)+\frac12\norm{x-z}_A^2
 \right\}.
 \label{eq:metric-prox}
\end{equation}
Throughout this section, the smooth parts $G_k$ of the local objectives, and
$G$ in the auxiliary lemmas, satisfy the stated convexity and smoothness
bounds on the ambient space.  In particular, these bounds hold at all
extrapolated points of the local accelerated routine.  When the regularizer is $r$, we
extend it by $+\infty$ outside $\X$; its proximal map then enforces the
constraint.

\begin{proposition}[Complexity of residual certification]
\label{prop:local-work}
Let $\mathcal M_k=G_k+\mathcal R$, where $\mathcal R$ is proper, closed,
and convex, $G_k$ is convex and $L$-smooth in the $A$-metric, and
$\mathcal M_k$ is $m$-strongly convex and attains its minimum
$\mathcal M_k^\star$.  Suppose $L,m>0$ are uniform in $k$,
$\prox_{\mathcal R/L}$ is available, and the warm starts obey
\begin{equation*}
 \mathcal M_k(z_k^0)-\mathcal M_k^\star
 \le C_{\rm warm}\frac{\xi_k}{8L},
 \qquad C_{\rm warm}\ge1,
\end{equation*}
where $C_{\rm warm}$ is independent of $k$ and the requested residual level.
A restarted accelerated proximal-gradient method followed by one certificate step
returns $z_k$ and $e_k\in\partial\mathcal M_k(z_k)$ satisfying
$\norm{e_k}_{A^{-1}}^2\le\xi_k$ in
\begin{equation*}
 O\!\left((1+\sqrt{L/m})\log(2+C_{\rm warm})\right)
\end{equation*}
local gradient and proximal evaluations.
\end{proposition}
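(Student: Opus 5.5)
The plan is to run a standard accelerated proximal-gradient method (FISTA/APG in the $A$-metric, with step $1/L$) on $\mathcal M_k$, restart it whenever the function gap has been reduced by a constant factor, and finish with one proximal-gradient step that produces a computable subgradient certificate.

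\textbf{Certificate step.} For any point $z$, set
\[
z^+:=\prox_{\mathcal R/L}\bigl(z-L^{-1}A^{-1}\nabla G_k(z)\bigr),
\qquad
e:=\nabla G_k(z^+)-\nabla G_k(z)+L A(z-z^+).
\]
The optimality condition of the metric prox gives $e\in\nabla G_k(z^+)+\partial\mathcal R(z^+)=\partial\mathcal M_k(z^+)$. Using $L$-smoothness,
\[
\norm{e}_{A^{-1}}\le 2L\norm{z-z^+}_A .
\]
The standard gradient-mapping estimate $\frac L2\norm{z-z^+}_A^2\le\mathcal M_k(z)-\mathcal M_k^\star$ (from the descent lemma and the three-point property of the prox step) then yields
\[
\norm{e}_{A^{-1}}^2\le 8L\bigl(\mathcal M_k(z)-\mathcal M_k^\star\bigr).
\]
It therefore suffices to produce $z$ with gap at most $\xi_k/(8L)$.

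\textbf{Restarted APG phase.} The known APG bound in the $A$-metric is
\[
\mathcal M_k(z^N)-\mathcal M_k^\star\le \frac{2L\norm{z^0-z^\star}_A^2}{(N+1)^2}.
\]
By $m$-strong convexity, $\norm{z^0-z^\star}_A^2\le\frac2m\bigl(\mathcal M_k(z^0)-\mathcal M_k^\star\bigr)$. Combining the two, $N=\lceil 2\sqrt{2L/m}\,\rceil$ iterations halve the gap. Restarting from the last iterate and repeating, $\lceil\log_2 C_{\rm warm}\rceil$ epochs reduce the gap from the warm-start level $C_{\rm warm}\xi_k/(8L)$ to $\xi_k/(8L)$. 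The total work is
\[
O\bigl((1+\sqrt{L/m})\log(2+C_{\rm warm})\bigr)
\]
gradient and prox evaluations, plus one extra gradient and prox for the certificate. Note that a restart does not need to know the gap: the fixed epoch length guarantees the halving.

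\textbf{Main obstacle.} The main difficulty is making the gradient-mapping inequality $\frac L2\norm{z-z^+}^2_A\le\mathcal M_k(z)-\mathcal M_k^\star$ rigorous in the composite, possibly constrained metric setting. I would first try the classical route of Beck--Teboulle: bound $\mathcal M_k(z^+)\le\mathcal M_k(z)-\frac L2\norm{z-z^+}^2_A$ and then use $\mathcal M_k(z^+)\ge\mathcal M_k^\star$. If that route falls short by a constant factor, the fix is to adjust the constant 8 in the warm-start normalization. A second point to check is that the smoothness bounds must hold at the extrapolated APG points lying outside $\X$; the standing assumption of ambient-space smoothness covers this.
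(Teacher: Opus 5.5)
Your proposal is correct and follows essentially the same route as the paper: restarted FISTA blocks of length $\lceil\sqrt{8L/m}\rceil$ that each halve the gap, $\lceil\log_2 C_{\rm warm}\rceil$ such blocks, and a final proximal-gradient certificate step with $\norm{e}_{A^{-1}}^2\le 4L^2\norm{z-z^+}_A^2\le 8L(\mathcal M_k(z)-\mathcal M_k^\star)$. The obstacle you flag does not arise. Adding the subgradient inequality for $\mathcal R$ at $z^+$ (using the prox optimality vector) to the descent lemma for $G_k$ gives exactly $\mathcal M_k(z^+)\le\mathcal M_k(z)-\tfrac L2\norm{z-z^+}_A^2$, so the constant $8$ needs no adjustment.
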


At iteration $k$ of Algorithm~\ref{alg:tiger-corrected}, take
$\mathcal M_k=\mathcal M_{y_k}$ and $v_{k+1}=z_k$.  A $K$-iteration run
therefore uses
$O\!\left(K(1+\sqrt{L/m})\log(2+C_{\rm warm})\right)$ local evaluations
and $2K$ synchronized correction queries.  The warm-start premise is a
separate implementation assumption and is not needed for
Theorem~\ref{thm:communication}.

In the quadratic specialization with $\beta=2\delta$, if $f_1$ is convex and
$L_1$-smooth and $\Phi$ is $\mu$-strongly convex, one may take
\begin{equation*}
 L=L_1+2\delta,
 \qquad
 m=\delta+\max\{\delta,\mu\}.
\end{equation*}
The proof is given in Proposition~\ref{prop:local-curvature} below.

For the auxiliary results, set
$\norm{x}=\norm{x}_A$, $\dnorm{u}=\norm{u}_{A^{-1}}$, and
$\mathsf Jx=Ax$; then
\[
 \ip{\mathsf Jx}{y}=x^\top Ay,
 \qquad
 \dnorm{\mathsf Jx}=\norm{x}.
\]
In this appendix, $\dom\mathcal R$ denotes the effective domain of
$\mathcal R$.
Using the metric proximal map in~\eqref{eq:metric-prox}, define
\[
 T_L(y):=\prox_{\mathcal R/L}\left(
 y-\frac1L\mathsf J^{-1}\nabla G(y)\right).
\]

\subsection{Accelerated proximal-gradient block}
\label{app:proof-apg-block}

\begin{lemma}[Accelerated proximal-gradient block]
\label{lem:apg-block}
Let $\mathcal M=G+\mathcal R$, where $G$ is convex, differentiable, and
$L$-smooth, $\mathcal R$ is proper, closed, and convex, and $\mathcal M$ is
$m$-strongly convex and attains its minimum at $x_{\mathcal M}^\star$.
Starting from
$\widetilde x_0\in\dom\mathcal R$, set $t_1=1$,
$\widetilde y_1=\widetilde x_0$, and, for $j=1,\ldots,B$, set
\begin{align*}
 \widetilde x_j&=T_L(\widetilde y_j),
 \\
 t_{j+1}&=\frac{1+\sqrt{1+4t_j^2}}2,
 \\
 \widetilde y_{j+1}&=\widetilde x_j
 +\frac{t_j-1}{t_{j+1}}(\widetilde x_j-\widetilde x_{j-1}).
\end{align*}
Then
\begin{equation*}
 \mathcal M(\widetilde x_B)-\mathcal M^\star
 \le\frac{2L}{(B+1)^2}\norm{\widetilde x_0-x_{\mathcal M}^\star}^2
 \le\frac{4L}{m(B+1)^2}
 \bigl(\mathcal M(\widetilde x_0)-\mathcal M^\star\bigr).
\end{equation*}
Thus
$ B=\left\lceil\sqrt{{8L}/{m}}\right\rceil$
reduces the gap by at least one half.
\end{lemma}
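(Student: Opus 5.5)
The first inequality is the standard FISTA bound, and I would prove it with the usual estimate-sequence argument in the $A$-metric. Because $\mathsf J$ identifies the primal and dual spaces isometrically, every Euclidean step carries over with $\norm{\cdot}=\norm{\cdot}_A$.

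The basic ingredient is the proximal-gradient inequality. For any $y$ and any $u\in\dom\mathcal R$,
\[
 \mathcal M(u)-\mathcal M(T_L(y))\ge\frac L2\norm{T_L(y)-y}^2+L\ip{\mathsf J(y-T_L(y))}{u-y}.
\]
Derive it in three steps. First, $L$-smoothness of $G$ gives the upper quadratic bound at $T_L(y)$. Second, convexity of $G$ at $y$ gives a lower bound on $G(u)$. Third, the optimality condition of the metric prox gives
\[
 L\mathsf J(y-T_L(y))-\nabla G(y)\in\partial\mathcal R(T_L(y)).
\]
Apply this inequality twice: once with $u=\widetilde x_{j-1}$ and once with $u=x^\star_{\mathcal M}$, both at $y=\widetilde y_j$. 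Then form the combination with weights $(t_j-1)$ and $1$ and multiply by $t_j$. The relation $t_j^2-t_j=t_{j-1}^2$ yields the Beck--Teboulle telescoping inequality for
\[
 \frac{2}{L}t_j^2\bigl(\mathcal M(\widetilde x_j)-\mathcal M^\star\bigr)+\norm{t_j\widetilde x_j-(t_j-1)\widetilde x_{j-1}-x^\star_{\mathcal M}}^2 .
\]
Summing, and using $t_1=1$ together with the first step started from $\widetilde y_1=\widetilde x_0$, gives
\[
 \mathcal M(\widetilde x_B)-\mathcal M^\star\le\frac{L\norm{\widetilde x_0-x^\star_{\mathcal M}}^2}{2t_B^2}.
\]
Since $t_j\ge(j+1)/2$ by induction, this is at most $2L\norm{\widetilde x_0-x^\star_{\mathcal M}}^2/(B+1)^2$.

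The main point requiring care is the base step. The first iteration has no momentum term, so I would handle $j=1$ separately: apply the inequality at $y=\widetilde x_0$ with $u=x^\star_{\mathcal M}$. The resulting base value involves only $\norm{\widetilde x_0-x^\star_{\mathcal M}}^2$, and no term $\mathcal M(\widetilde x_0)-\mathcal M^\star$ appears, since $\widetilde x_0$ may lie only in $\dom\mathcal R$.

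For the second inequality, use $m$-strong convexity of $\mathcal M$ at its minimizer. Because $0\in\partial\mathcal M(x^\star_{\mathcal M})$,
\[
 \mathcal M(\widetilde x_0)-\mathcal M^\star\ge\frac m2\norm{\widetilde x_0-x^\star_{\mathcal M}}^2 .
\]
Substituting $\norm{\widetilde x_0-x^\star_{\mathcal M}}^2\le 2(\mathcal M(\widetilde x_0)-\mathcal M^\star)/m$ into the first bound gives the factor $4L/(m(B+1)^2)$.

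Finally, take $B=\lceil\sqrt{8L/m}\rceil$. Then $(B+1)^2>8L/m$, so
\[
 \frac{4L}{m(B+1)^2}<\frac12 ,
\]
and the block halves the gap.
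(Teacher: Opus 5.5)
Your proposal is correct and follows essentially the same argument as the paper. Your proximal-gradient inequality is the expanded inner-product form of the paper's bound $\mathcal M(T_L(y))\le\mathcal M(x)+\frac L2\bigl(\norm{x-y}^2-\norm{x-T_L(y)}^2\bigr)$. The weights $t_j(t_j-1)$ and $t_j$, the potential $\frac2L t_j^2\bigl(\mathcal M(\widetilde x_j)-\mathcal M^\star\bigr)+\norm{t_j\widetilde x_j-(t_j-1)\widetilde x_{j-1}-x^\star_{\mathcal M}}^2$, the separate base step at $\widetilde y_1=\widetilde x_0$, the bound $t_B\ge(B+1)/2$, and the strong-convexity conversion all match up to an index shift; the only cosmetic difference is that you derive the basic inequality from the explicit prox subgradient, whereas the paper uses $L$-strong convexity of the proximal surrogate.
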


\begin{proof}[Proof of Lemma~\ref{lem:apg-block}]
Optimality of $T_L(y)$, $L$-smoothness of $G$, and convexity of $G$ imply,
for every $x\in\dom\mathcal R$,
\begin{equation}
 \mathcal M(T_L(y))
 \le\mathcal M(x)+\frac L2
 \left(\norm{x-y}^2-\norm{x-T_L(y)}^2\right).
 \label{eq:prox-three-point}
\end{equation}
Indeed, the proximal surrogate is $L$-strongly convex, so its value at its
minimizer plus $L\norm{x-T_L(y)}^2/2$ is at most its value at $x$.
The descent lemma bounds $\mathcal M(T_L(y))$ by the surrogate at the
minimizer, while convexity of $G$ bounds the surrogate at $x$ by
$\mathcal M(x)+L\norm{x-y}^2/2$.

Set
\[
 \Delta_j=\mathcal M(\widetilde x_j)-\mathcal M^\star,
 \qquad
 w_j=t_j\widetilde x_j-(t_j-1)\widetilde x_{j-1}-x_{\mathcal M}^\star.
\]
For $j=1,\ldots,B-1$, apply~\eqref{eq:prox-three-point} at iteration $j+1$
with comparison points
$\widetilde x_j$ and $x_{\mathcal M}^\star$, multiply the two inequalities by
$t_{j+1}(t_{j+1}-1)$ and $t_{j+1}$, and use
$t_{j+1}(t_{j+1}-1)=t_j^2$.  Expanding the squared norms gives
\begin{equation}
 \frac{2t_{j+1}^2}{L}\Delta_{j+1}+\norm{w_{j+1}}^2
 \le\frac{2t_j^2}{L}\Delta_j+\norm{w_j}^2.
 \label{eq:fista-potential}
\end{equation}
The required norm cancellation is the elementary identity
\begin{align*}
 t_{j+1}(t_{j+1}-1)
 \left(\norm{\widetilde x_j-\widetilde y_{j+1}}^2
 -\norm{\widetilde x_j-\widetilde x_{j+1}}^2\right)
 &+t_{j+1}
 \left(\norm{x_{\mathcal M}^\star-\widetilde y_{j+1}}^2
 -\norm{x_{\mathcal M}^\star-\widetilde x_{j+1}}^2\right)\\
 &=\norm{w_j}^2-\norm{w_{j+1}}^2.
\end{align*}
The initial application of~\eqref{eq:prox-three-point}, with
$\widetilde y_1=\widetilde x_0$
and $x=x_{\mathcal M}^\star$, gives explicitly
\[
 \frac2L\Delta_1+\norm{\widetilde x_1-x_{\mathcal M}^\star}^2
 \le\norm{\widetilde x_0-x_{\mathcal M}^\star}^2.
\]
Since $t_1=1$ and $w_1=\widetilde x_1-x_{\mathcal M}^\star$, this is the
first instance of the same estimate.
Telescoping
\eqref{eq:fista-potential} gives
\[
 \frac{2t_B^2}{L}\Delta_B\le\norm{\widetilde x_0-x_{\mathcal M}^\star}^2.
\]
The recurrence defining $t_j$ implies $t_B\ge(B+1)/2$, proving the first
inequality in Lemma~\ref{lem:apg-block}.  Strong convexity gives
$\norm{\widetilde x_0-x_{\mathcal M}^\star}^2
\le2(\mathcal M(\widetilde x_0)-\mathcal M^\star)/m$, which
proves the second.  The stated block length makes the last
coefficient at most $1/2$.
\end{proof}

\subsection{A computable proximal-gradient residual}
\label{app:proof-pg-certificate}

\begin{lemma}[Computable proximal-gradient residual]
\label{lem:pg-certificate}
Under the assumptions of Lemma~\ref{lem:apg-block}, let
$y\in\dom\mathcal R$ and $z=T_L(y)$.  Then
\[
 s:=-\nabla G(y)-L\mathsf J(z-y)\in\partial\mathcal R(z),
\]
and
\begin{equation*}
 e:=\nabla G(z)+s
 =\nabla G(z)-\nabla G(y)-L\mathsf J(z-y)
 \in\partial\mathcal M(z),
\end{equation*}
satisfies
\begin{equation*}
 \dnorm{e}^2\le8L\bigl(\mathcal M(y)-\mathcal M^\star\bigr).
\end{equation*}
\end{lemma}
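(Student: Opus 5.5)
The first claim follows from the optimality condition of the metric proximal map. By \eqref{eq:metric-prox}, $z=T_L(y)$ minimizes $\mathcal R(x)/L+\frac12\norm{x-u}^2$ with $u=y-\frac1L\mathsf J^{-1}\nabla G(y)$. Since $\nabla_x\frac12\norm{x-u}_A^2=\mathsf J(x-u)$, the condition reads
\[
0\in\tfrac1L\partial\mathcal R(z)+\mathsf J(z-y)+\tfrac1L\nabla G(y).
\]
Multiplying by $L$ gives $s=-\nabla G(y)-L\mathsf J(z-y)\in\partial\mathcal R(z)$. Because $G$ is differentiable and convex, $\partial\mathcal M(z)=\nabla G(z)+\partial\mathcal R(z)$, so $e=\nabla G(z)+s\in\partial\mathcal M(z)$, and the displayed formula for $e$ follows by substitution.

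For the norm bound, write $e=a-b$ with $a=\nabla G(z)-\nabla G(y)$ and $b=L\mathsf J(z-y)$. Then $\dnorm{a}\le L\norm{z-y}$ by $L$-smoothness in the $A$-metric, and $\dnorm{b}=L\norm{z-y}$ by the identity $\dnorm{\mathsf Jx}=\norm{x}$. Hence
\[
\dnorm{e}^2\le(2L\norm{z-y})^2=4L^2\norm{z-y}^2 .
\]
A slightly sharper bound is available: cocoercivity of the convex $L$-smooth $G$ gives $\dnorm{a-b}\le L\norm{z-y}$, i.e.\ $\dnorm{e}^2\le L^2\norm{z-y}^2$. The cruder bound already suffices, so that refinement is optional.

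It therefore remains to show $L\norm{z-y}^2\le 2(\mathcal M(y)-\mathcal M^\star)$. Apply \eqref{eq:prox-three-point} with comparison point $x=y$, which is admissible since $y\in\dom\mathcal R$:
\[
\mathcal M(z)\le\mathcal M(y)-\tfrac L2\norm{y-z}^2 .
\]
Using $\mathcal M(z)\ge\mathcal M^\star$ gives $\frac L2\norm{z-y}^2\le\mathcal M(y)-\mathcal M^\star$. Combining with the previous display,
\[
\dnorm{e}^2\le4L^2\norm{z-y}^2\le 8L\bigl(\mathcal M(y)-\mathcal M^\star\bigr),
\]
which is the claim.

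The only delicate step is the sufficient-decrease inequality. It relies on \eqref{eq:prox-three-point} holding for comparison points in $\dom\mathcal R$, which that proof established through the descent lemma on the ambient space and the $L$-strong convexity of the proximal surrogate. Everything else is bookkeeping with $\mathsf J$ and the dual norm.
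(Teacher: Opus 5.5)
Your proof is correct and follows the paper's argument. The proximal optimality condition and the smooth-sum rule give the inclusions, the triangle inequality with $L$-smoothness and $\dnorm{\mathsf Jx}=\norm{x}$ gives $\dnorm{e}\le 2L\norm{z-y}$, and the sufficient decrease $\mathcal M(z)\le\mathcal M(y)-\frac L2\norm{z-y}^2$ closes the bound. The differences are minor: you get that decrease by taking $x=y$ in \eqref{eq:prox-three-point}, whereas the paper rederives it from $\mathcal R(y)\ge\mathcal R(z)+\ip{s}{y-z}$ and the descent lemma, and your optional cocoercivity refinement $\dnorm{e}\le L\norm{z-y}$ is also valid because $G$ is convex and $L$-smooth on the ambient space.
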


\begin{proof}[Proof of Lemma~\ref{lem:pg-certificate}]
The inclusion for $s$ is the first-order condition of the proximal problem.
The convex smooth sum rule gives
 $\partial(G+\mathcal R)(z)=\nabla G(z)+\partial\mathcal R(z)$, proving
$e\in\partial\mathcal M(z)$.  Put $d=z-y$.  Convexity of $\mathcal R$ and
the descent lemma yield
\begin{align*}
 \mathcal R(y)&\ge\mathcal R(z)+\ip{s}{y-z}
 =\mathcal R(z)+\ip{\nabla G(y)}{d}+L\norm{d}^2,\\
 G(z)&\le G(y)+\ip{\nabla G(y)}{d}+\frac L2\norm{d}^2.
\end{align*}
Hence $\mathcal M(z)\le\mathcal M(y)-L\norm{z-y}^2/2$.  Moreover,
\[
 \dnorm{e}
 \le\dnorm{\nabla G(z)-\nabla G(y)}+L\norm{z-y}
 \le2L\norm{z-y}.
\]
Therefore
\[
 \dnorm{e}^2
 \le4L^2\norm{z-y}^2
 \le8L(\mathcal M(y)-\mathcal M(z))
 \le8L(\mathcal M(y)-\mathcal M^\star).
\]
\end{proof}

\subsection{Local complexity of residual certification}
\label{app:proof-local-work}

\begin{proof}[Proof of Proposition~\ref{prop:local-work}]
Set
$ B=\left\lceil\sqrt{{8L}/{m}}\right\rceil,$
and
$ J=\left\lceil\log_2 C_{\rm warm}\right\rceil.$
Every restarted block halves the current gap by
Lemma~\ref{lem:apg-block}.  If $\widehat y_k$ is the output after $J$ blocks,
then
\[
 \mathcal M_k(\widehat y_k)-\mathcal M_k^\star
 \le2^{-J}C_{\rm warm}\frac{\xi_k}{8L}
 \le\frac{\xi_k}{8L}.
\]
Lemma~\ref{lem:pg-certificate} now gives $\dnorm{e_k}^2\le\xi_k$.
Each FISTA iteration uses one gradient and one proximal evaluation.  The final
certificate step uses one proximal evaluation, its input gradient, and one
additional output gradient.  Thus, when the proposition is used at every
outer iteration of Algorithm~\ref{alg:tiger-corrected}, a $K$-iteration run
reporting $v_K$ uses at most $K(BJ+1)$ proximal evaluations and $K(BJ+2)$
gradient evaluations.
The asymptotic statement follows from
$B\le1+\sqrt{8L/m}$ and $J\le1+\log_2 C_{\rm warm}$.
\end{proof}

\subsection{Inherited local curvature}
\label{app:proof-local-curvature}

\begin{proposition}[Inherited local curvature]
\label{prop:local-curvature}
Let $A=A^\top\succ0$, suppose that
condition~\eqref{eq:reference-hilbert-similarity} holds with $\delta\ge0$, take
$\omega(x)=\norm{x}_A^2/2$, and set $\beta=2\delta$.
Suppose $f_1$ is convex and $L_1$-smooth in the $A$-metric, $r$ is proper,
closed, and convex, and $\Phi=p+f_1+r$ is $\mu$-strongly convex.  Then, in the
decomposition
\begin{equation*}
 \mathcal M_y=G_y+r,
 \qquad
 G_y(x)=p(y)+\ip{\nabla p(y)}{x-y}
       +f_1(x)+\delta\norm{x-y}_A^2
\end{equation*}
$G_y$ is convex and
$(L_1+2\delta)$-smooth, while $\mathcal M_y$ is strongly convex with 
parameter
$ \delta+\max\{\delta,\mu\}.$
If $a\in\partial r(z)$, the corresponding certified residual is
\begin{equation*}
 e=\nabla p(y)+\nabla f_1(z)+a+2\delta A(z-y)
 \in\partial\mathcal M_y(z).
\end{equation*}
If $L_1+2\delta=0$, any positive smoothness upper bound may be used in
Proposition~\ref{prop:local-work}.
\end{proposition}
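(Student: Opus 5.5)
The plan is to work entirely through Hessian bounds in the $A$-metric, since with $\omega(x)=\norm{x}_A^2/2$ and $\beta=2\delta$ every function involved is quadratic plus a term with controlled curvature. Throughout I will write $G_y=q_y^{\rm lin}+f_1+\delta\norm{\cdot-y}_A^2$. Here $q_y^{\rm lin}(x)=p(y)+\ip{\nabla p(y)}{x-y}$ is affine. I first check that $\mathcal M_y=G_y+r$ agrees with \eqref{eq:local-model}, using $\beta D_\omega(x,y)=2\delta\cdot\frac12\norm{x-y}_A^2=\delta\norm{x-y}_A^2$ and $q=f_1+r$.

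\textbf{Smoothness and convexity of $G_y$.} The affine part contributes nothing to curvature. $f_1$ is convex and $L_1$-smooth in the $A$-metric. The quadratic $\delta\norm{x-y}_A^2$ has gradient $2\delta A(x-y)$, which is $2\delta$-Lipschitz from $\Anorm{\cdot}$ to $\Adnorm{\cdot}$, and it is convex. Adding these, $G_y$ is convex and its gradient is $(L_1+2\delta)$-Lipschitz in the primal/dual pair of norms, which is the $A$-metric smoothness needed in Proposition~\ref{prop:local-work}. No twice-differentiability of $f_1$ is needed here.

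\textbf{Strong convexity of $\mathcal M_y$.} I will obtain two separate lower bounds and take the better one.

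\emph{First bound.} $\mathcal M_y=(q_y^{\rm lin}+f_1+r)+\delta\norm{\cdot-y}_A^2$. The bracket is convex because $f_1$ and $r$ are convex, and the last term is $2\delta$-strongly convex in the $A$-norm. Hence $\mathcal M_y$ is $2\delta$-strongly convex, which equals $\delta+\delta$.

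\emph{Second bound.} I use identity \eqref{eq:model-identity-corrected}: $\mathcal M_y=\Phi+\bigl(R_\beta-R_\beta(y)-\ip{\nabla R_\beta(y)}{\cdot-y}\bigr)$. Proposition~\ref{prop:hilbert-cocoercivity} (applied with $\beta=2\delta$) states that $R_\beta$ is $\delta$-strongly convex when $\delta>0$, and convex when $\delta=0$. Subtracting an affine function preserves this. Since $\Phi$ is $\mu$-strongly convex, the sum is $(\mu+\delta)$-strongly convex; for $\delta=0$ this follows from convexity of $R_\beta$ alone.

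Together the two bounds give modulus $\max\{2\delta,\mu+\delta\}=\delta+\max\{\delta,\mu\}$.

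The main point of care is the second bound. Proposition~\ref{prop:hilbert-cocoercivity} relies on $p$ being $C^2$ with $\nabla^2p$ bounded as in \eqref{eq:reference-hilbert-similarity}. That gives $\nabla^2R_\beta=2\delta A-\nabla^2p\succeq\delta A$ on $\X$, and integrating along segments turns this into $\delta$-strong convexity on the convex set $\X$. Strong convexity of $\Phi$ is only assumed on $\X$, where $r$ lives, and outside $\X$ the extended value is $+\infty$. So both summands are strongly convex on $\X$, and therefore so is $\mathcal M_y$. I would state explicitly that the modulus is understood on $\X=\dom\mathcal M_y$.

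\textbf{Residual formula.} $G_y$ is differentiable with $\nabla G_y(z)=\nabla p(y)+\nabla f_1(z)+2\delta A(z-y)$. By the convex smooth sum rule, $\partial\mathcal M_y(z)=\nabla G_y(z)+\partial r(z)$. Taking $a\in\partial r(z)$ therefore gives the displayed $e\in\partial\mathcal M_y(z)$.

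\textbf{Degenerate case.} If $L_1+2\delta=0$, then $\nabla G_y$ is constant. It is then $L$-Lipschitz for every $L>0$, so any positive constant serves as the smoothness bound in Proposition~\ref{prop:local-work}.
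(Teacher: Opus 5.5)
Your proposal is correct and follows the paper's proof essentially step for step. You get the $2\delta$ modulus from convexity of $q=f_1+r$ plus the quadratic $\delta\norm{\cdot-y}_A^2$, and the $(\mu+\delta)$ modulus from identity~\eqref{eq:model-identity-corrected} combined with $\delta$-strong convexity of $R_{2\delta}$; the paper obtains the latter directly from the descent lemma for $p$ (the Bregman divergence of $R_{2\delta}$ is $\delta\norm{x-y}_A^2$ minus that of $p$, hence at least $\frac{\delta}{2}\norm{x-y}_A^2$), whereas you cite the Hessian bound of Proposition~\ref{prop:hilbert-cocoercivity}, which rests on the same similarity condition.
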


\begin{proof}[Proof of Proposition~\ref{prop:local-curvature}]
The displayed splitting and smoothness bound follow by differentiation.
Convexity of $q$ makes $\mathcal M_y$ $2\delta$-strongly convex.
The descent lemma for $p$ also gives
\[
 R_{2\delta}(y)-R_{2\delta}(x)
 -\ip{\nabla R_{2\delta}(x)}{y-x}
 =\delta\norm{y-x}^2
 -\bigl(p(y)-p(x)-\ip{\nabla p(x)}{y-x}\bigr)
 \ge\frac\delta2\norm{y-x}^2,
\]
so $R_{2\delta}$ is $\delta$-strongly convex.  Together with
$\mu$-strong convexity of $\Phi$, identity
\eqref{eq:model-identity-corrected} shows
that $\mathcal M_y$ is $(\mu+\delta)$-strongly convex as well.  The larger of
the two moduli is $\delta+\max\{\delta,\mu\}$.  The residual formula follows
from the smooth convex sum rule and direct substitution.
\end{proof}

\section{Proofs of the model-error bounds}
\label{app:geometry-proofs}

\subsection{Controlling the model error under Hessian similarity}
\label{app:proof-hilbert-cocoercivity}

\begin{proof}[Proof of Proposition~\ref{prop:hilbert-cocoercivity}]
For any $\beta\ge\delta$, the similarity bound gives
\[
 (\beta-\delta)A
 \preceq\nabla^2R_\beta(z)=\beta A-\nabla^2p(z)
 \preceq(\beta+\delta)A.
\]
Thus $R_\beta$ is convex, and the argument below also shows that
$\delta_{\omega}=\beta+\delta$ is admissible.  With the stated choice
$\beta=2\delta$, the bounds become
\[
 \delta A\preceq\nabla^2R_\beta(z)\preceq3\delta A.
\]
Hence $R_\beta$ is convex and $3\delta$-smooth in the $A$-metric; when
$\delta>0$, it is $\delta$-strongly convex.

Fix $x,y\in\X$, put $u=A^{1/2}(y-x)$, and define
\[
 B:=\int_0^1
 A^{-1/2}\nabla^2R_\beta(x+t(y-x))A^{-1/2}\,dt.
\]
Because $\X$ is convex, the integration path belongs to $\X$, and
$ \delta I\preceq B\preceq3\delta I.$
The fundamental theorem of calculus gives
\[
 A^{-1/2}\bigl(\nabla R_\beta(y)-\nabla R_\beta(x)\bigr)=Bu,
 \qquad
 S_\beta(y,x)=u^\top Bu.
\]
Since $B$ is positive semidefinite and $B\preceq3\delta I$,
$B^2\preceq3\delta B$.  Consequently,
\[
 \Adnorm{\nabla R_\beta(y)-\nabla R_\beta(x)}^2
 =u^\top B^2u
 \le3\delta\,u^\top Bu
 =3\delta S_\beta(y,x).
\]
Thus Assumption~\ref{ass:correction} holds with
$\delta_{\omega}=3\delta$.  Lemma~\ref{lem:model-error-four-point} gives the
corresponding four-point bound.
\end{proof}

\subsection{Controlling the model error in entropy geometry}
\label{app:proof-entropy-cocoercivity}

\begin{proof}[Proof of Proposition~\ref{prop:entropy-cocoercivity}]
Fix $x,y\in\Delta_d^\alpha$, set $u=y-x$, and define the averaged Hessian
\[
 \overline A:=
 \int_0^1\nabla^2R(x+tu)\,dt.
\]
The matrix $\overline A$ is positive semidefinite on directions $w$
satisfying $\mathbf1^\top w=0$.  The fundamental theorem of calculus,
Cauchy--Schwarz, and
the Hessian bound in Proposition~\ref{prop:entropy-cocoercivity} give,
for every tangent vector $v$ with
$\norm{v}_1\le1$,
\begin{align*}
 |\ip{\nabla R(y)-\nabla R(x)}{v}|^2
 =|u^\top\overline A v|^2\le (u^\top\overline A u)(v^\top\overline A v)\le \delta_{\omega}
 \ip{\nabla R(y)-\nabla R(x)}{y-x}.
\end{align*}
Taking the supremum over such $v$ proves the claimed cocoercivity bound.
The lower Hessian bound gives convexity of
$R$ on $\Delta_d^\alpha$.  When $R=R_\beta$, this proves
Assumption~\ref{ass:correction}.
\end{proof}

\subsection{Entropy identities and implementation formulas}
\label{app:entropy-implementation}
\label{app:proof-entropy-identities}

\begin{lemma}[Entropy identities and quotient dual norm]
\label{lem:entropy-identities}
For $e\in\R^d$, the quotient dual norm associated with feasible differences
on the simplex is
\begin{equation}
 \dnorm{e}
 :=\sup_{\substack{\mathbf1^\top v=0\\ \norm{v}_1\le1}}
 \langle e,v\rangle
 =\inf_{c\in\R}\norm{e-c\mathbf1}_\infty
 =\frac12\left(\max_i e_i-\min_i e_i\right).
 \label{eq:quotient-dual}
\end{equation}
The KL identity and Pinsker bound in~\eqref{eq:pinsker} hold on
$\ri\Delta_d$.
\end{lemma}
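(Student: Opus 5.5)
The plan is to prove the quotient dual-norm formula~\eqref{eq:quotient-dual} as a chain of three equalities, and then to verify the KL identity and Pinsker's inequality on $\ri\Delta_d$ directly.

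\emph{First equality to the third expression.} Fix $e$ and set $M=\max_i e_i$, $m=\min_i e_i$. For the upper bound, take any feasible $v$ with $\mathbf1^\top v=0$ and $\norm{v}_1\le1$, and split $v=v^+-v^-$ into positive and negative parts. Since $\mathbf1^\top v=0$, we have $\mathbf1^\top v^+=\mathbf1^\top v^-=\tfrac12\norm{v}_1\le\tfrac12$. Hence
\[
 \ip{e}{v}=\ip{e}{v^+}-\ip{e}{v^-}\le M\,\mathbf1^\top v^+-m\,\mathbf1^\top v^-\le\tfrac12(M-m).
\]
For the matching lower bound, choose indices $i^+\in\argmax_i e_i$ and $j^-\in\argmin_i e_i$ and set $v=\tfrac12(\mathbf e_{i^+}-\mathbf e_{j^-})$. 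This $v$ is feasible and attains $\tfrac12(M-m)$; if $i^+=j^-$, then $M=m$ and $v=0$ works.

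\emph{Middle expression.} The map $c\mapsto\norm{e-c\mathbf1}_\infty=\max\{M-c,\;c-m\}$ is a maximum of two affine functions of opposite slope. It is minimized at $c=(M+m)/2$, where its value is $(M-m)/2$. As a consistency check, this agrees with Hölder duality: for feasible $v$, $\ip{e}{v}=\ip{e-c\mathbf1}{v}\le\norm{e-c\mathbf1}_\infty\norm{v}_1$ for every $c$, which recovers the upper bound a second way.

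\emph{KL identity and Pinsker.} For $x,y\in\ri\Delta_d$, compute $\nabla\omega(x)=\log x+\mathbf1$. Then
\[
 D_\omega(y,x)=\sum_i y_i\log y_i-\sum_i x_i\log x_i-\sum_i(\log x_i+1)(y_i-x_i).
\]
The $x_i\log x_i$ terms cancel, and $\sum_i(y_i-x_i)=0$ removes the constant $+1$ contributions. What remains is $\sum_i y_i\log(y_i/x_i)$, which is the KL identity. Pinsker's inequality $\KL(y\Vert x)\ge\tfrac12\norm{y-x}_1^2$ is classical. If a self-contained argument is wanted, I would reduce it to the two-point case by the data-processing inequality, grouping coordinates according to the sign of $y_i-x_i$, and then verify the binary inequality by differentiating in $x$. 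Equivalently, the bound follows from $1$-strong convexity of negative entropy in $\norm{\cdot}_1$ on the simplex, which the paper already uses in~\eqref{eq:omega-strong}.

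\emph{Expected obstacle.} There is no genuine obstacle. The only care needed is the observation that $\mathbf1^\top v=0$ forces $\mathbf1^\top v^\pm=\norm{v}_1/2$, and the restriction to the relative interior, which keeps every logarithm finite.
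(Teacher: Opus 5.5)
Your proof is correct and follows the paper's argument in substance. You use the same extremal vector $\tfrac12(\mathbf e_{i^+}-\mathbf e_{j^-})$, the same direct expansion for the KL identity, and the same binary reduction for Pinsker. There the paper uses the log-sum inequality and convexity of $\KL(a\Vert b)-2(a-b)^2$ in $a$, while you use data processing and differentiation in the second argument. The only structural difference is that you evaluate the supremum and the infimum separately, whereas the paper closes the cycle $\sup\le\inf\le\tfrac12(\max_i e_i-\min_i e_i)\le\sup$ via H\"older.

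One caution: your ``equivalent'' justification of Pinsker by appeal to \eqref{eq:omega-strong} is circular. That inequality is an assumption on $\omega$, and Pinsker is exactly what verifies it for the entropy. It becomes an independent proof only if you establish the strong convexity directly, e.g.\ from the Cauchy--Schwarz bound $\sum_j u_j^2/z_j\ge\norm{u}_1^2$ for $z\in\ri\Delta_d$, integrated along the segment from $x$ to $y$.
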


\begin{proof}[Proof of Lemma~\ref{lem:entropy-identities}]
Expanding the Bregman difference and using
$\sum_i(y_i-x_i)=0$ proves the KL identity.  For completeness, Pinsker's
inequality follows from a binary reduction.  Let
$I=\{i:y_i\ge x_i\}$, write $x(I):=\sum_{i\in I}x_i$, and set
$t=\sum_{i\in I}(y_i-x_i)=\norm{y-x}_1/2$.  The log-sum inequality gives
\[
 \KL(y\Vert x)\ge
 \KL(x(I)+t\,\Vert\,x(I)).
\]
Here the right-hand side is the KL divergence between Bernoulli
distributions with parameters $x(I)+t$ and $x(I)$.
For fixed $b\in(0,1)$, the function
$\KL(a\Vert b)-2(a-b)^2$ is convex in $a$, because its second derivative is
$1/[a(1-a)]-4\ge0$.  Its value and derivative are zero at $a=b$.
Hence the binary divergence is at least $2t^2$, proving
\eqref{eq:pinsker}; boundary cases follow by continuity.

For~\eqref{eq:quotient-dual}, adding $c\mathbf1$ does not change the pairing
with any $v$ satisfying $\mathbf1^\top v=0$, and H\"older's inequality gives
\[
 \sup_{\substack{\mathbf1^\top v=0\\ \norm{v}_1\le1}}\langle e,v\rangle
 \le\inf_{c\in\R}\norm{e-c\mathbf1}_\infty.
\]
Choosing $c=(\max_i e_i+\min_i e_i)/2$ bounds the infimum by half the
range of $e$.  If $i_+$ and $i_-$ attain the maximum and minimum,
respectively, then
$v=(\mathbf e_{i_+}-\mathbf e_{i_-})/2$ satisfies
$\mathbf1^\top v=0$ and $\norm{v}_1\le1$, and gives
$\langle e,v\rangle=(\max_i e_i-\min_i e_i)/2$.  This proves all equalities
in~\eqref{eq:quotient-dual}.
\end{proof}

By~\eqref{eq:quotient-dual}, the residual requirement in
Algorithm~\ref{alg:tiger-corrected} becomes
\[
 \frac12\left(\max_i(e_k)_i-\min_i(e_k)_i\right)\le\sqrt{\xi_k}.
\]
It is invariant under adding any multiple $c\mathbf1$ to $e_k$.

For $x,z\in\Delta_d^\alpha$ and $g\in\R^d$, define
\begin{equation*}
 \widetilde a
 :=\operatorname{softmax}\!\left(
   \frac{\log x+\tau\log z-(\tau/\mu)g}{1+\tau}
  \right),
\end{equation*}
where logarithms are componentwise and
$[\operatorname{softmax}(u)]_i:=e^{u_i}/\sum_j e^{u_j}$.  The constrained
two-center step is
\begin{equation}
 \begin{split}
 a^+
 &:=\argmin_{a\in\Delta_d^\alpha}\left\{
  \frac{\tau}{\mu}\langle g,a\rangle
  +\tau\KL(a\Vert z)+\KL(a\Vert x)
 \right\}=\argmin_{a\in\Delta_d^\alpha}\KL(a\Vert\widetilde a).
 \end{split}
 \label{eq:entropy-two-center-step}
\end{equation}
Indeed, the two objectives in~\eqref{eq:entropy-two-center-step} differ by a
positive factor and a constant independent of $a$.  Thus $a^+$ is the
forward-KL projection of $\widetilde a$ onto $\Delta_d^\alpha$.
Equivalently,
\[
 (a^+)_i=\max\{\alpha,\rho\widetilde a_i\},
 \qquad
 \sum_i\max\{\alpha,\rho\widetilde a_i\}=1,
\]
where the second relation uniquely determines $\rho>0$.

\subsection{Entropic transport facts used by the application}
\label{app:proof-entropic-ot-relative-sc}

\begin{proposition}[Relative curvature of entropic transport]
\label{prop:entropic-ot-relative-sc}
Let $\gamma>0$ and $a\in\Delta_d$ have strictly positive coordinates.
Then the function $W_\gamma(x,a)-\gamma\omega(x)$ is convex in $x$ on
$\Delta_d$.
Consequently, for every $x,y\in\Delta_d^\alpha$ and every
$g_x\in\partial W_\gamma(\cdot,a)(x)$,
\begin{equation}
 W_\gamma(y,a)
 \ge
 W_\gamma(x,a)
 +\langle g_x,y-x\rangle
 +\gamma D_\omega(y,x).
 \label{eq:entropic-ot-relative-sc}
\end{equation}
Here
$ D_\omega(y,x)=\KL(y\Vert x)
 =\sum_{j=1}^d y_j\log\frac{y_j}{x_j}.$
If all entries of $C$ are finite, then $W_\gamma(\cdot,a)$ is infinitely
continuously differentiable on a relative open neighborhood of
$\Delta_d^\alpha$ in the simplex.
\end{proposition}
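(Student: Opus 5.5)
The approach is to use the dual (Kantorovich) representation of the entropic transport cost and read off its Hessian. For $x\in\ri\Delta_d$, standard entropic OT duality gives
\[
 W_\gamma(x,a)=\max_{\phi,\psi\in\R^d}\Bigl\{\ip{\phi}{x}+\ip{\psi}{a}
 -\gamma\sum_{j,\ell}\exp\bigl((\phi_j+\psi_\ell-C_{j\ell})/\gamma-1\bigr)\Bigr\},
\]
with the optimal plan $\pi_{j\ell}=\exp((\phi_j+\psi_\ell-C_{j\ell})/\gamma-1)$ having full support because $C$ is finite. The dual is strictly concave modulo the shift $(\phi+c\mathbf1,\psi-c\mathbf1)$. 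This shift is invisible on tangent directions, so its maximizer is unique modulo shifts. The implicit function theorem applied to the Sinkhorn marginal equations $\pi\mathbf1=x$, $\pi^\top\mathbf1=a$ then gives smoothness. The Jacobian of the marginal map in $(\phi,\psi)$ is $\gamma^{-1}\begin{pmatrix}\operatorname{diag}(x)&\pi\\ \pi^\top&\operatorname{diag}(a)\end{pmatrix}$, which is nonsingular on the quotient by the shift since $\pi$ has a connected (complete) support graph. The maps $x\mapsto\phi(x)$ and $x\mapsto W_\gamma$ are therefore $C^\infty$ on $\ri\Delta_d\supset\Delta_d^\alpha$, and $\nabla_xW_\gamma=\phi(x)$ modulo $\mathbf1$ (envelope theorem). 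This settles the last claim, on a relative neighborhood of $\Delta_d^\alpha$ since $\alpha>0$.

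For the convexity of $W_\gamma-\gamma\omega$, I will compute the tangent Hessian. Write $P=\operatorname{diag}(x)$, $Q=\operatorname{diag}(a)$. Differentiating the marginal equations, the Hessian of $W_\gamma$ on $\{v:\mathbf1^\top v=0\}$ is $\gamma$ times the Schur-complement inverse
\[
 \nabla^2W_\gamma(x)\simeq\gamma\,\bigl(P-\pi Q^{-1}\pi^\top\bigr)^{\dagger},
\]
restricted to tangent directions. Here $P-\pi Q^{-1}\pi^\top\succeq0$ has kernel $\mathbf1$. Since $P-\pi Q^{-1}\pi^\top\preceq P$ on the complement of the kernel, the generalized inverse dominates $P^{-1}=\nabla^2\omega(x)$ there, giving $v^\top\nabla^2W_\gamma v\ge\gamma\sum_jv_j^2/x_j$. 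This comparison is the main obstacle. The pseudo-inverse and the projection onto $\mathbf1^\perp$ must be handled carefully, because $P^{-1}$ restricted to tangent vectors is not literally the inverse of $P$ on that subspace.

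My first attempt at this comparison is a variational formulation. By duality of quadratic forms,
\[
 v^\top\nabla^2W_\gamma v=\gamma\sup_{\eta}\bigl\{2\ip{\eta}{v}-\eta^\top(P-\pi Q^{-1}\pi^\top)\eta\bigr\}\ge\gamma\sup_\eta\{2\ip{\eta}{v}-\eta^\top P\eta\}=\gamma v^\top P^{-1}v.
\]
The supremum is finite for $\mathbf1^\top v=0$, and the middle quantity equals the Hessian form. This avoids pseudo-inverses entirely.

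As a fallback that avoids second derivatives, I would show directly that $W_\gamma(\cdot,a)-\gamma\omega$ is convex. Using the chain rule for entropy, $\sum\pi\log\pi=\sum_jx_j\log x_j+\sum_jx_j\sum_\ell\kappa_{j\ell}\log\kappa_{j\ell}$ with $\kappa=\operatorname{diag}(x)^{-1}\pi$. Then $W_\gamma-\gamma\omega$ is a minimum, over row-stochastic $\kappa$ with $\kappa^\top x=a$, of an expression jointly convex in $(x,\pi)$, namely $\ip{C}{\pi}+\gamma\sum_{j,\ell}\pi_{j\ell}\log(\pi_{j\ell}/x_j)$. This is a perspective of $t\log t$, hence jointly convex. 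The constraint set $\mathcal U(x,a)$ is linear in $(x,\pi)$, so partial minimization preserves convexity. This is cleaner, and I would likely present it as the main argument, with the Hessian route used only for the smoothness claim.

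Finally, \eqref{eq:entropic-ot-relative-sc} follows from the convexity of $h:=W_\gamma(\cdot,a)-\gamma\omega$ on $\Delta_d^\alpha$, where $\omega$ is differentiable. Any $g_x\in\partial W_\gamma(\cdot,a)(x)$ yields $g_x-\nabla\omega(x)\in\partial h(x)$ by the sum rule, which is valid since $\omega$ is differentiable at interior points. Hence $h(y)\ge h(x)+\ip{g_x-\nabla\omega(x)}{y-x}$. Rearranging gives $W_\gamma(y,a)\ge W_\gamma(x,a)+\ip{g_x}{y-x}+\gamma D_\omega(y,x)$, with $D_\omega=\KL$ by Lemma~\ref{lem:entropy-identities}.
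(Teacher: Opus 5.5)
Your proof is correct. For the convexity claim and the subgradient inequality it follows the paper's argument. The paper also writes $W_\gamma(x,a)-\gamma\omega(x)$ as a minimum over $\pi\in\mathcal U(x,a)$ of $\ip{C}{\pi}+\gamma\sum_j\varphi(\pi_{j:})$, with $\varphi(u)=\sum_\ell u_\ell\log u_\ell-(\sum_\ell u_\ell)\log(\sum_\ell u_\ell)$ the closed perspective of the negative entropy. It then concludes by partial minimization over the polyhedral set $\{(x,\pi):\pi\in\mathcal U(x,a)\}$, followed by the same sum-rule step.

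The paper puts the row sums $\sum_\ell\pi_{j\ell}$ in place of $x_j$, which makes the boundary case $x_j=0$ automatic, since feasibility forces $\pi_{j:}=0$. In your version you should state the convention $0\log(0/0)=0$. You should also run the minimization over $\pi$ rather than over $\kappa$, because the constraint $\kappa^\top x=a$ is bilinear. In your last paragraph the sum rule should give $g_x-\gamma\nabla\omega(x)\in\partial h(x)$; the factor $\gamma$ is missing twice.

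Your real departure from the paper is the smoothness claim, which the paper only attributes to the Sinkhorn representation and the literature. Your implicit-function argument makes it self-contained. Write the Jacobian as $\gamma^{-1}M$ with $M=\begin{pmatrix}\operatorname{diag}(x)&\pi\\ \pi^\top&\operatorname{diag}(a)\end{pmatrix}$. Then $(v,w)^\top M(v,w)=\sum_{j,\ell}\pi_{j\ell}(v_j+w_\ell)^2$. So under full support the kernel of $M$ is spanned by $(\mathbf 1,-\mathbf 1)$, and its range is exactly the hyperplane $\mathbf 1^\top u=\mathbf 1^\top w$ in which the marginal map takes its values. This is precisely what the quotient form of the implicit function theorem needs.

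Your Schur-complement computation, with the variational bound $v^\top S^\dagger v\ge v^\top P^{-1}v$ for $0\preceq S\preceq P$, is also correct. It gives the infinitesimal form of the claim, $v^\top\nabla^2W_\gamma(x)v\ge\gamma\sum_jv_j^2/x_j$ on tangent directions, but only on $\ri\Delta_d$. Obtaining convexity on all of $\Delta_d$ that way would need an extra continuity argument at the boundary. That is why keeping the perspective argument as the main proof, as you plan, is the right choice.
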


\begin{proof}
For a nonnegative row $u\in\R_+^d$, define
\[
 \varphi(u):=
 \sum_{\ell=1}^d u_\ell\log u_\ell
 -
 \left(\sum_{\ell=1}^d u_\ell\right)
 \log\left(\sum_{\ell=1}^d u_\ell\right),
 \qquad 0\log0:=0.
\]
This is the closed perspective of the negative entropy on the simplex and
is therefore convex.

For every $\pi\in\mathcal U(x,a)$, its first marginal satisfies
$x_j=\sum_{\ell=1}^d\pi_{j\ell}$.  Hence
\begin{equation}
 W_\gamma(x,a)-\gamma\omega(x)
 =
 \min_{\pi\in\mathcal U(x,a)}
 \left\{
  \langle C,\pi\rangle
  +\gamma\sum_{j=1}^d\varphi(\pi_{j:})
 \right\},
 \label{eq:conditional-entropy-perspective}
\end{equation}
where $\pi_{j:}$ denotes the $j$th row of $\pi$.  This representation is
well defined even when $x_j=0$, because feasibility then forces
$\pi_{j:}=0$ and $\varphi(0)=0$.

Let $x^0,x^1\in\Delta_d$, let
$\pi^0\in\mathcal U(x^0,a)$ and
$\pi^1\in\mathcal U(x^1,a)$, and fix $t\in[0,1]$.  Then
\[
 (1-t)\pi^0+t\pi^1
 \in
 \mathcal U\bigl((1-t)x^0+tx^1,a\bigr).
\]
The objective in
\eqref{eq:conditional-entropy-perspective} is convex in $\pi$.
Taking infima over $\pi^0$ and $\pi^1$ proves that
$W_\gamma(x,a)-\gamma\omega(x)$ is convex in $x$ on $\Delta_d$.

Set $G_a:=W_\gamma(\cdot,a)-\gamma\omega$.  At $x\in\Delta_d^\alpha$, the
smooth-sum rule gives
$g_x-\gamma\nabla\omega(x)\in\partial G_a(x)$.
The subgradient inequality for $G_a$ yields
\[
 W_\gamma(y,a)-\gamma\omega(y)
 \ge W_\gamma(x,a)-\gamma\omega(x)
 +\left\langle g_x-\gamma\nabla\omega(x),y-x\right\rangle.
\]
Rearranging proves~\eqref{eq:entropic-ot-relative-sc}.

For positive marginals and finite $C$, the Sinkhorn representation and the
normalized dual potentials depend smoothly on the first marginal on a
relative neighborhood of $\Delta_d^\alpha$
\citep{cuturi2013sinkhorn,cuturi2014fast,luise2018differential}.
Thus $W_\gamma(\cdot,a)$ is infinitely continuously differentiable on that
relative neighborhood.
\end{proof}

\subsection{Proof for the entropic barycenter application}
\label{app:proof-barycenter}

\begin{proof}[Proof of Lemma~\ref{lem:barycenter-relative-geometry}]
Proposition~\ref{prop:entropic-ot-relative-sc} shows that every function
$f_i(x)-\gamma\omega(x)$ is convex on $\Delta_d$.  Hence
\begin{equation*}
 F(x)-\gamma\omega(x)
 =\frac1n\sum_{i=1}^n(f_i(x)-\gamma\omega(x))
\end{equation*}
is convex on $\Delta_d^\alpha$.

\medskip
\noindent\textit{Relative strong convexity.}
For $x,y\in\Delta_d^\alpha$, convexity of $F-\gamma\omega$ gives
\begin{equation}
 F(y)
 \ge
 F(x)
 +\left\langle\nabla F(x),y-x\right\rangle
 +\gamma D_\omega(y,x).
 \label{eq:barycenter-relative-sc-intrinsic}
\end{equation}
All gradients below are taken along feasible directions of the simplex.

Since $f_1-\gamma\omega$ and $\omega$ are convex, $f_1$ is convex on
$\Delta_d^\alpha$.  Here $\mathcal X=\Delta_d^\alpha$, $r\equiv0$, and
$q$ agrees with $f_1$ on $\mathcal X$.  As stipulated in
Section~\ref{sec:setting}, $q$ is understood to be extended by $+\infty$
outside $\mathcal X$.  The standard subdifferential rule for a
differentiable convex function restricted to a closed convex set therefore
gives, for every $s\in\partial q(x)$,
\begin{equation*}
 \left\langle s-\nabla f_1(x),y-x\right\rangle\le0
 \qquad (y\in\Delta_d^\alpha).
\end{equation*}
Since $p=F-f_1$, it follows that
\begin{align*}
 \left\langle\nabla p(x)+s,y-x\right\rangle
 &=
 \left\langle\nabla F(x),y-x\right\rangle
 +\left\langle s-\nabla f_1(x),y-x\right\rangle\le
 \left\langle\nabla F(x),y-x\right\rangle.
\end{align*}
Combining this inequality with
\eqref{eq:barycenter-relative-sc-intrinsic} proves
Assumption~\ref{ass:outer} with $\mu=\gamma$.

\medskip
\noindent\textit{Well-posedness.}
The set $\Delta_d^\alpha$ is compact, and $\omega$ and the individual
transport losses are smooth on a relative neighborhood of this set.  The
Bregman proximal-gradient subproblem is strongly convex relative to $\omega$
with modulus at least $\gamma+\beta$, while the outer mirror objective is
strongly convex relative to $\omega$ with modulus $1+\tau$.  Consequently,
both subproblems attain unique solutions in $\Delta_d^\alpha$.

\medskip
\noindent\textit{Model-error bound.}
The correction $p=F-f_1$ is $C^2$ on a relative open neighborhood of
$\Delta_d^\alpha$ in the simplex.
Since $\Delta_d^\alpha$ and the set of tangent directions satisfying
$\norm{u}_1=1$ are compact, continuity of $\nabla^2p$ implies that
the maximum defining $M_\alpha$ in~\eqref{eq:barycenter-Malpha} is finite.
By homogeneity,
\[
 \left|u^\top\nabla^2p(z)u\right|
 \le
 M_\alpha\norm{u}_1^2
\]
for every $z\in\Delta_d^\alpha$ and every $u\in\R^d$ satisfying
$\mathbf1^\top u=0$.

For $z\in\Delta_d^\alpha$ and $u\in\R^d$ satisfying
$\mathbf1^\top u=0$, the weighted Cauchy--Schwarz inequality and
$z_j\ge\alpha$ give
\begin{equation*}
 \norm{u}_1^2
 \le u^\top\nabla^2\omega(z)u
 =\sum_{j=1}^d\frac{u_j^2}{z_j}
 \le\frac1\alpha\norm{u}_1^2.
\end{equation*}
Choose
\begin{equation*}
 \beta:=M_\alpha,
 \qquad
 \delta_{\omega}:=\left(1+\frac1\alpha\right)M_\alpha.
\end{equation*}
For every $z\in\Delta_d^\alpha$ and $u\in\R^d$ satisfying
$\mathbf1^\top u=0$,
\begin{align*}
 u^\top\nabla^2R_\beta(z)u
 &\ge M_\alpha\norm{u}_1^2-|u^\top\nabla^2p(z)u|\ge0,\\
 u^\top\nabla^2R_\beta(z)u
 &\le\frac{M_\alpha}{\alpha}\norm{u}_1^2
      +|u^\top\nabla^2p(z)u|
 \le\delta_{\omega}\norm{u}_1^2.
\end{align*}
Thus $R_\beta$ is convex on $\Delta_d^\alpha$ and satisfies the Hessian bound
in Proposition~\ref{prop:entropy-cocoercivity}.
Proposition~\ref{prop:entropy-cocoercivity}
therefore proves Assumption~\ref{ass:correction} on $\Delta_d^\alpha$.
\end{proof}

\begin{corollary}[Explicit barycenter parameters and initialization]
\label{cor:barycenter-explicit-parameters}
Under the conditions of
Lemma~\ref{lem:barycenter-relative-geometry}, let
$x_\alpha^\star$ be the minimizer of $F$ over $\Delta_d^\alpha$,
and let $M_\alpha$ be defined by~\eqref{eq:barycenter-Malpha}.
Then the parameter choices
\begin{equation}
 \mu=\gamma,
 \qquad
 \beta=M_\alpha,
 \qquad
 \delta_\omega=
 \left(1+\frac{1}{\alpha}\right)M_\alpha,
 \qquad
 \tau=
 \left(
  \max\left\{
   1,
   \sqrt{\frac{(1+\alpha^{-1})M_\alpha}{\gamma}}
  \right\}
 \right)^{-1}
 \label{eq:barycenter-geometry-parameters}
\end{equation}
are admissible.

For any initialization $x_0=v_0\in\Delta_d^\alpha$, let
\begin{equation*}
 g_0:=\nabla F(x_0)
 =\frac{1}{n}\sum_{i=1}^n\nabla_xW_\gamma(x_0,a_i)
\end{equation*}
and use the quotient dual norm
\begin{equation*}
 \dnorm{g_0}
 :=
 \inf_{c\in\R}\norm{g_0-c\mathbf1}_\infty
 =
 \frac{1}{2}
 \left(
  \max_j(g_0)_j-\min_j(g_0)_j
 \right).
\end{equation*}
Then
\begin{align}
 \gamma(1+\tau)\KL(x_\alpha^\star\Vert x_0)
 +F(x_0)-F(x_\alpha^\star)
 &\le
 \frac{2(1+\tau)}{\gamma}\dnorm{g_0}^2
 =
 \frac{1+\tau}{2\gamma}
 \left(
  \max_j(g_0)_j-\min_j(g_0)_j
 \right)^2.
 \label{eq:barycenter-initial-bound}
\end{align}
Consequently, the last expression in
\eqref{eq:barycenter-initial-bound} is an admissible value of
$\overline B_0$. The simpler choice
\begin{equation}
 \overline B_0
 =
 \frac{1}{\gamma}
 \left(
  \max_j(g_0)_j-\min_j(g_0)_j
 \right)^2
 \label{eq:barycenter-initial-simple}
\end{equation}
is also admissible and does not depend explicitly on $\tau$.
\end{corollary}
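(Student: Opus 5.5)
The plan is to derive the corollary directly from Lemma~\ref{lem:barycenter-relative-geometry} and Proposition~\ref{prop:initial-bound}, together with the quotient dual-norm identity~\eqref{eq:quotient-dual} of Lemma~\ref{lem:entropy-identities}.

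\emph{Step 1: admissibility of the parameters.} Lemma~\ref{lem:barycenter-relative-geometry} already gives Assumption~\ref{ass:outer} with $\mu=\gamma$, and Assumption~\ref{ass:correction} with $\beta=M_\alpha$ and $\delta_\omega=(1+\alpha^{-1})M_\alpha$. Substituting these into~\eqref{eq:tau-choice} yields the displayed $\tau$ in~\eqref{eq:barycenter-geometry-parameters}. Nothing else is needed here.

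\emph{Step 2: the initialization bound.} I would apply Proposition~\ref{prop:initial-bound} with $x_0=v_0\in\Delta_d^\alpha$. For this I must exhibit some $g\in\nabla p(x_0)+\partial q(x_0)$ with $\dnorm{g}\le\dnorm{\nabla F(x_0)}$.

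Since $r\equiv0$ and $q$ is $f_1$ extended by $+\infty$ off $\X$, we have $\nabla f_1(x_0)\in\partial q(x_0)$, because the normal cone contains $0$. Hence $g=\nabla p(x_0)+\nabla f_1(x_0)=\nabla F(x_0)=g_0$ is a valid composite subgradient. The subtlety is that gradients are defined only along the tangent directions $\mathbf1^\top v=0$, so $g_0$ is determined only modulo $c\mathbf1$. However, the pairing with feasible differences, and hence the quotient dual norm, is invariant under such shifts. The proof of Proposition~\ref{prop:initial-bound} only uses $\ip{g_0}{x_0-x^\star}\le\dnorm{g_0}\norm{x_0-x^\star}_1$ with $x_0-x^\star$ tangent, together with Pinsker~\eqref{eq:pinsker}, so it goes through verbatim with $x^\star=x^\star_\alpha$. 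This gives the first inequality in~\eqref{eq:barycenter-initial-bound} with the factor $2(1+\tau)/\gamma$.

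\emph{Step 3: the explicit forms.} The equality in~\eqref{eq:barycenter-initial-bound} follows by inserting $\dnorm{g_0}=\tfrac12(\max_j(g_0)_j-\min_j(g_0)_j)$ from~\eqref{eq:quotient-dual}, since $\dnorm{g_0}^2$ is a quarter of the squared range. The simpler choice~\eqref{eq:barycenter-initial-simple} then follows from $\tau\le1$, which gives $(1+\tau)/(2\gamma)\le1/\gamma$. Both values therefore dominate the left side of~\eqref{eq:initial-scale}.

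The only step I expect to require care is Step 2: justifying that the extended-valued subdifferential of $q$ on the truncated simplex contains the tangential gradient of $f_1$, and that working modulo $\mathbf1$ is consistent with Assumption~\ref{ass:outer} as verified in Lemma~\ref{lem:barycenter-relative-geometry}. The rest is routine substitution.
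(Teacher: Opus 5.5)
Your proposal is correct and follows the paper's proof essentially step for step. Both take admissibility from Lemma~\ref{lem:barycenter-relative-geometry}, get $g_0=\nabla F(x_0)\in\nabla p(x_0)+\partial q(x_0)$ from $\nabla f_1(x_0)\in\partial q(x_0)$, and then use Proposition~\ref{prop:initial-bound} with $\mu=\gamma$, the quotient-dual identity~\eqref{eq:quotient-dual}, and $1+\tau\le2$; your remark on invariance modulo $\mathbf 1$ is a harmless clarification that the paper leaves implicit.
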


% \begin{proof}
% The proof of Lemma~\ref{lem:barycenter-relative-geometry} establishes the
% finiteness of $M_\alpha$ and the admissibility of the parameter choices in
% \eqref{eq:barycenter-geometry-parameters}.  Since
% $0\in N_{\Delta_d^\alpha}(x_0)$, the vector
% $g_0=\nabla F(x_0)$ belongs to
% $\nabla p(x_0)+\partial q(x_0)$.  Proposition~\ref{prop:initial-bound},
% with $\mu=\gamma$, gives the inequality in
% \eqref{eq:barycenter-initial-bound}; the quotient-dual identity follows
% from~\eqref{eq:quotient-dual}.  Finally,
% $1+\tau_\alpha\le2$ shows that the choice in
% \eqref{eq:barycenter-initial-simple} is also admissible.
% \end{proof}

\begin{proof}
The proof of Lemma~\ref{lem:barycenter-relative-geometry} establishes the
finiteness of $M_\alpha$ and the admissibility of the parameter choices in
\eqref{eq:barycenter-geometry-parameters}. 
The subdifferential rule used above gives
$\nabla f_1(x_0)\in\partial q(x_0)$.  Since
$\nabla F(x_0)=\nabla p(x_0)+\nabla f_1(x_0)$, it follows that
\begin{equation*}
 g_0=\nabla F(x_0)
 \in\nabla p(x_0)+\partial q(x_0).
\end{equation*}
Proposition~\ref{prop:initial-bound},
with $\mu=\gamma$, gives the inequality in
\eqref{eq:barycenter-initial-bound}; the quotient-dual identity follows
from~\eqref{eq:quotient-dual}. Finally,
$1+\tau\le2$ shows that the choice in
\eqref{eq:barycenter-initial-simple} is also admissible.
\end{proof}

\subsection{Additional details for the numerical experiments}
\label{app:fashion-protocol}

\paragraph{Reference objective value.}
We computed a high-accuracy centralized solution
$\widehat x\in\Delta_d^\alpha$ using entropy mirror descent with
backtracking.  The solve was terminated when the truncated-simplex KKT
residual
\[
 \widehat r_{\mathrm{KKT}}
 :=
 \min_{\substack{
  \lambda\in\R,\ \nu\in\R_+^d\\
  \nu_j(\widehat x_j-\alpha)=0,\ j=1,\ldots,d
 }}
 \left\|
  \nabla F(\widehat x)+\lambda\mathbf 1-\nu
 \right\|_\infty
\]
was at most $10^{-7}$.  Here $\lambda$ is the multiplier associated with
$\mathbf 1^\top x=1$, and $\nu$ contains the multipliers associated with
the constraints $x_j\ge\alpha$.  We used the residual-corrected  reference
\[
 F_{\mathrm{ref}}
 :=
 F(\widehat x)-\frac{2\widehat r_{\mathrm{KKT}}^2}{\gamma}.
\]
For exact objective and gradient evaluations, relative strong convexity
implies $F_{\mathrm{ref}}\le F^\star$.  Since the transport costs and
gradients are evaluated by a finite-accuracy Sinkhorn procedure, we use
$F_{\mathrm{ref}}$ as a common numerical reference rather than as a
rigorously certified lower bound.  In the reported problems, the
normalization correction
$2\widehat r_{\mathrm{KKT}}^2/\gamma$ was at most $6.1\cdot10^{-10}$.

\paragraph{Communication accounting.}
One synchronized query corresponds to requesting the agent gradients
needed to form an aggregate gradient at a single point.  A gradient reused
at the same point is counted only once, whereas every new synchronized
evaluation made during backtracking or a rejected trial is counted.
Computations performed locally between synchronized queries, including the
solution of local subproblems, are not charged to the communication count.
The maximum budget was $4000$ synchronized queries for every method; the
figure displays the first $100$ queries, where the differences between the
methods are most visible.

\paragraph{Aggregation of results.}
For each run, we record the relative objective gap after every synchronized
query and plot its running minimum.  The same transport oracle, objective
parameters, and initial point are used for all methods.  Results are
aggregated separately for the within-class and cross-class problems: the
curves show the median over the four problems in each group, and the shaded
regions show the corresponding interquartile range.

\paragraph{Choice of algorithmic parameters.}
A single configuration of each method was selected using a separate
collection of Fashion-MNIST barycenter problems that did not overlap with
the problems reported in Figure~\ref{fig:fashion-all-methods}.  The
configurations were then fixed and used for all reported runs without
task-specific retuning.

\FloatBarrier

\begingroup
\small
\raggedright
\bibliographystyle{unsrtnat}
\bibliography{refs}
\endgroup
\end{document}